\newtheorem{theorem}{Theorem}
\newtheorem{lemma}{Lemma}
\newtheorem{assumption}{Assumption}
\theoremstyle{definition}
\newtheorem{remark}{Remark}
\newcommand{\R}{\mathbb{R}}
\newcommand{\mF}{\mathcal{F}}
\newcommand{\mC}{\mathcal{C}}
\newcommand{\Ep}{\mathrm{E}}
\renewcommand{\Pr}{\mathrm{P}}
\renewcommand{\hat}{\widehat}
\renewcommand{\tilde}{\widetilde}
\DeclareMathOperator{\Var}{Var}
\DeclareMathOperator{\Cov}{Cov}
\begin{document}

\title[]{A simple method to construct  confidence bands in functional linear regression}
\thanks{K. Kato is supported by Grant-in-Aid for Scientific Research (C) (15K03392) from the JSPS}
\author[M. Imaizumi]{Masaaki Imaizumi}
\author[K. Kato]{Kengo Kato}

\date{First version: December 20, 2016. This version: \today}

\address[M. Imaizumi]{
The Institute of Statistical Mathematics\\
10-3 Midori-cho, Tachikawa, Tokyo 190-8562, Japan.
}
\email{imaizumi@ism.ac.jp}

\address[K. Kato]{
Graduate School of Economics, University of Tokyo\\
7-3-1 Hongo, Bunkyo-ku, Tokyo 113-0033, Japan.
}
\email{kkato@e.u-tokyo.ac.jp}

\begin{abstract}
This paper develops a simple method to construct confidence bands, centered at a principal component analysis (PCA) based estimator,  for the slope function in a functional linear regression model with a scalar response variable and a functional predictor variable. The PCA-based estimator is a series estimator with estimated basis functions, and so construction of valid confidence bands for it is a non-trivial challenge. We propose a confidence band that aims at covering the slope function at ``most'' of points with a prespecified probability (level), and prove its asymptotic validity under suitable regularity conditions. Importantly, this is the first paper that derives confidence bands having theoretical justifications for the PCA-based estimator. 
We also propose a practical method to choose the cut-off level used in PCA-based estimation, and conduct numerical studies to verify the finite sample performance of the proposed confidence band. Finally, we apply our methodology to spectrometric data, and discuss extensions of our methodology to cases where additional vector-valued regressors are present. 
\end{abstract}

\keywords{confidence band, functional linear regression, functional principal component analysis}

\maketitle

\section{Introduction}

Data collected on dense grids can be typically regarded as realizations of a random
function. Such data are called \textit{functional data}, and statistical methodology dealing with functional data, called \textit{functional data analysis}, has now a wide range of applications including chemometrics,
econometrics, and biomedical studies; see e.g. \cite{RaSi05, FeVi06, HsEu15}. One of the most basic models in functional data analysis is a functional linear regression model. 
For a functional linear regression model, of particular interest is estimation and inference on the slope function. Estimation based on functional principal component analysis (PCA) is among the most popular and fundamental methods to estimate the slope function \citep[cf.][]{CaFeSa99,RaSi05,YaMuWa05,CaHa06,HaHo07}. 

This paper develops a simple method to construct confidence bands for the slope function in a functional linear regression model which is applicable to a PCA-based estimator. 
To be precise, we work with the following setting. 
Let $Y$ be a scalar response variable and let $X$ be a predictor variable which we assume to be an $L^{2}(I)$-valued random variable (random function) such that $\int_{I}\Ep\{ X^{2}(t) \} dt < \infty$, where $I$ is a compact interval. Consider a functional linear model with a scalar response variable
\begin{equation}
Y = a+\int_{I} b(t) [X(t)-\Ep \{ X(t) \} ] dt + \varepsilon, \ \Ep (\varepsilon) = 0, \ \Ep (\varepsilon^{2})=\sigma^{2} \in (0,\infty),
\label{eq: model}
\end{equation}
where $a$ is an unknown constant (indeed, $a=\Ep (Y)$), $b \in L^{2}(I)$ is an unknown slope function, and $X$ and $\varepsilon$ are independent. The error variance $\sigma^{2}$ is also unknown. We are interested in constructing confidence bands for the slope function $b$ centered at a PCA-based estimator. 
In spite of extensive studies on functional linear regression models, to the best of our knowledge, there is no formal result on confidence bands for the slope function $b$ which is applicable to a PCA-based estimator (see below for the literature review). The purpose of this paper is to fill this important void. 

Quantifying uncertainty of estimators is a pivotal part in statistical analysis. Confidence bands provide a simple-to-interpret graphical description on accuracy of nonparametric estimators. 
Several techniques to construct confidence bands are available to kernel estimation of density and regression functions \citep{Sm50,BiRo73,Clva03,ChChKa14a,ChChKa14b}, and to series estimation with \textit{non-stochastic} basis functions as well \citep{ChChKa14a,BeChChKa15,ChCh15}. See also \cite{Wa06,GiNi16} as general references on nonparametric inference. However, in PCA estimation of a functional linear model, the eigenfunctions of the empirical covariance function are used. Since the empirical covariance function is stochastic, the eigenfunctions are stochastic as well, and randomness of these eigenfunctions has to be properly taken into account, which lays a new and non-trivial challenge. Of course, in principle, it could be possible to show that the effect of estimation errors in the empirical eigenfunctions is negligible and apply existing machinery (such as those developed in \cite{BeChChKa15}) on construction of confidence bands to the population eigenfunctions, but translating the required regularity conditions into primitive ones is highly non-trivial, since functional PCA is essentially an $L^{2}$-theory but confidence bands require to control the sup-norm error of the estimator. Furthermore, those required regularity conditions would be technically involved. 
It is worth noting that the eigenfunctions of the covariance function depend intrinsically on the distribution of $X$, so that making restrictions to the eigenfunctions would narrow the admissible class of distributions of $X$, which in turn restricts the applicability of the resulting method. 

The aim of the present paper is to propose a simple method to construct confidence bands centered at the PCA-based estimator that ``work'' under regularity conditions mostly standard in the literature on functional linear regression. 
To this end, we make a slight relaxation on coverage requirements of confidence bands, as in \cite{CaLoMa14}, and require our confidence band to cover the slope function $b$ at ``most'' of points $t \in I$ with a prespecified probability, say $90 \%$ or $95 \%$. We then propose a confidence band centered at the PCA-based estimator and show that under suitable regularity conditions, which are mostly standard in the literature on functional linear regression, the proposed confidence band satisfies this new requirement asymptotically. For the proposed confidence band to work in practice, the choice of the cut-off level is crucial. In theory, we should choose the cut-off level in such a way that it ``undersmoothes'' the PCA-based estimator. 
To this end, we propose to choose the cut-off level slightly larger than the optimal one that minimizes an estimate of the $L^{2}$-risk of the PCA-based estimator. 
All these results, namely, the proposed confidence band, the asymptotic validity of the band, and the selection rule of the cut-off level, are new. We investigate the finite sample performance of the proposed confidence band via numerical simulations, which show that the proposed band, combined with the proposed selection rule of the cut-off level, works well in practice. Finally, we apply our methodology to spectrometric data, and discuss extensions of our methodology to cases where additional vector-valued regressors are present.

There are extensive studies on estimation and prediction in functional linear regression models; see \cite{CaFeSa99,CaFeSa03}, \cite{YaMuWa05}, \cite{CaHa06}, \cite{HaHo07}, \cite{LiHs07}, \cite{CrKnSa09}, \cite{JaWaZh09}, \cite{CaJo10}, \cite{YuCa10}, \cite{Me11}, \cite{DeHa12}, and \cite{CaYu12}. 
Statistical inference, such as hypothesis testing and construction of (pointwise) confidence intervals, for functional linear models is studied in \cite{MuSt05}, \cite{CaMaSa07}, \cite{GoMa11}, \cite{HiMaVe13}, \cite{Le14}, \cite{ShCh15}, and \cite{KhHo16}.
Except for \cite{MuSt05}, those papers do not address confidence bands for the slope function. \cite{CaMaSa07}, \cite{GoMa11}, \cite{KhHo16} are concerned with  confidence intervals for a scalar parameter $\int_{I} b(t)x(t) dt$ for a fixed $x \in L^{2}(I)$,  and \cite{HiMaVe13} and \cite{Le14} are concerned with testing the hypothesis that $b=0$ against suitable alternatives. These topics are related to but substantially different from ours. \cite{ShCh15} develop a number of important inference results in a generalized functional linear model, which includes our model (\ref{eq: model}) as a special case. In particular, they prove a pointwise asymptotic normality result for an estimator based on a reproducing kernel Hilbert space approach (see their Corollary 3.7), which leads to valid pointwise confidence intervals for the slope function. However, they do not consider confidence bands for the slope function, and they work with a different estimator than our PCA-based estimator. 
\cite{MuSt05} is an important pioneering work on confidence bands for the slope function in a generalized functional linear model. However, they work with non-stochastic  basis functions, and furthermore, strictly speaking, they only prove that their band is a valid confidence band for the surrogate function, but not for the slope function itself. Hence it is not formally known or at least a non-trivial question whether their band is valid when the estimated eigenfunctions are used.  See Section \ref{subsec: comparison} for detailed comparisons with the confidence band of \cite{MuSt05}. Our numerical studies in Section \ref{sec: numerical results} show that the confidence band of \cite{MuSt05}, when applied to the PCA-basis estimator, tends to have coverage probabilities far below the nominal level. 
The very recent preprint of \cite{Ba16} studies a generic (but conservative) method to construct honest confidence bands for ill-posed inverse problems, which include functional linear regression as a special case. However, \cite{Ba16} focuses on Tikhonov regularization estimation (and thus does not cover PCA-based estimation), and works with substantially different assumptions from ours (see his Assumption 5). 
We also mention \cite{BuIvWe11}, \cite{De11}, \cite{CaYaTo12}, \cite{MaYaCa12}, \cite{ChLiOg17} as references working on confidence bands for functional data. However, these paper do not deal with the functional linear regression model (\ref{eq: model}), and the methodologies and techniques used in those papers are substantially different from ours. For example,  \cite{ChLiOg17} consider a functional regression model where a response variable is a function and a predictor variable is a vector, which is the opposite setting from ours. 

The rest of the paper is organized as follows. 
In Section \ref{sec: methodology}, we informally present our methodology to construct confidence bands for $b$ using a PCA-based estimator.  In Section \ref{sec: theoretical guarantees}, we present theoretical guarantees of the proposed confidence band. In Section \ref{sec: cut-off}, we propose a practical method to choose the cut-off level used in PCA-based estimation. 
In Section \ref{sec: numerical results}, we present numerical results to verify the finite sample performance of the proposed confidence band. 
In Section \ref{sec: extension}, we discuss how to modify our methodology to construct a confidence band in cases where additional vector-valued regressors are present. 
Section \ref{sec: conclusion} concludes. 
All the proofs are deferred to Appendix.

\subsection{Notation}
We will use the following notation. For any measurable functions $f: I \to \R$ and $R: I^{2} \to \R$, let
$\| f \| = \left\{\int_{I} f^{2}(t) dt\right\}^{1/2}$ and $||| R |||=\left\{\iint_{I^2} R^{2}(s,t) dsdt\right\}^{1/2}$.
Let $\mathcal{L}^{2}(I) = \{ f: I \to \R : f \ \text{is measurable}, \ \| f \| < \infty \}$,
and define the equivalence relation $\sim$ for real-valued functions $f,g$ defined on $I$ by $f \sim g \Leftrightarrow f=g$ almost everywhere. Define $L^{2}(I)$ by the quotient space $L^{2}(I) = \mathcal{L}^{2}(I) \slash \sim$ equipped with the inner product $\langle  f^{\sim},g^{\sim}\rangle = \int_{I} f(t)g(t)dt$ for $f,g \in \mathcal{L}^{2}(I)$ where $f^{\sim} = \{ h \in \mathcal{L}^{2}(I) : h \sim f \}$; the space $L^{2}(I)$ is a separable Hilbert space, and as usual, we identify any element in $\mathcal{L}^{2}(I)$ as an element of $L^{2}(I)$. 
Define $L^{2}(I^{2})$ analogously.

\section{Methodology}
\label{sec: methodology}

\subsection{Functional principal component analysis}

We begin with reviewing an approach to estimate $b$ based on functional PCA.
Let $K(s,t)$ denote the covariance function of $X$, namely, $K(s,t) = \Cov \{X(s),X(t)\}$ for $s,t \in I$.
We assume that the integral operator from $L^{2}(I)$ into itself with kernel $K$, namely the covariance operator of $X$, is injective. The covariance operator is self-adjoint and positive definite. The Hilbert-Schmidt theorem \citep[see e.g.][Theorem VI.16]{ReSi80} then ensures that $K$ admits the spectral expansion
\[
K(s,t) = \sum_{k=1}^{\infty} \kappa_{j} \phi_{j} (s) \phi_{j} (t)
\]
in $L^{2}(I^{2})$, where $\kappa_{1} \geq \kappa_{2} \geq \cdots > 0$ are a non-increasing sequence of eigenvalues tending to zero and $\{ \phi_{j} \}_{j=1}^{\infty}$ is an orthonormal basis of $L^{2}(I)$ consisting of  eigenfunctions of the integral operator, namely,
\[
\int_{I} K(s,t) \phi_{j}(t) dt = \kappa_{j} \phi_{j} (s), \ j=1,2,\dots.
\]
Since $\{ \phi_{j} \}_{j=1}^{\infty}$ is an orthonormal basis of $L^{2}(I)$, we have the following expansions in $L^{2}(I)$: $b(t) = \sum_{j=1}^{\infty} b_{j} \phi_{j}(t)$ and $X(t) = \Ep\{ X(t) \} + \sum_{j=1}^{\infty} \xi_{j} \phi_{j}(t)$, 
where  $b_{j}$ and $\xi_{j}$ are defined by $b_{j} = \int_{I} b(t) \phi_{j}(t) dt$ and $\xi_{j} = \int_{I} [ X(t) - \Ep\{X(t)\} ] \phi_{j}(t) dt$, respectively. Then we obtain the following alternative expression of the regression model (\ref{eq: model}): 
\[
Y=a+ \sum_{j=1}^{\infty} b_{j} \xi_{j} + \varepsilon.
\]
Now, observe that $\Ep(\xi_{j})= 0$ for all $j=1,2,\dots$ and 
\[
\Ep( \xi_{j} \xi_{k} ) = \iint_{I^{2}} K(s,t) \phi_{j}(s) \phi_{k}(t) ds dt = 
\begin{cases}
\kappa_{j} & \text{if} \ j=k \\
0 & \text{if} \ j \neq k
\end{cases}
,
\]
which yields that $\Ep (\xi_{j}Y) = \kappa_{j} b_{j}$ for each $j=1,2,\dots$, namely, 
\begin{equation}
b_{j} = \Ep (\xi_{j} Y)/\kappa_{j}.
\label{eq: characterization}
\end{equation}
This characterization leads to a method to estimate $b$. 

Let $(Y_{1},X_{1}),\dots,(Y_{n},X_{n})$ be independent copies of $(Y,X)$. First, we  estimate $K$ by the empirical covariance function $\hat{K}$ defined as $\hat{K} (s,t) = n^{-1} \sum_{i=1}^{n} \{ X_{i}(s)- \overline{X}(s) \} \{ X_{i}(t)- \overline{X}(t) \}$ for $s,t \in I$, 
where $\overline{X} = n^{-1}\sum_{i=1}^{n}X_{i}$. Let $\hat{K}(s,t) = \sum_{j=1}^{\infty} \hat{\kappa}_{j} \hat{\phi}_{j}(s) \hat{\phi}_{j}(t)$
be the spectral expansion of $\hat{K}$ in $L^{2}(I^{2})$, where $\hat{\kappa}_{1} \geq \hat{\kappa}_{2} \geq \cdots  \geq 0$ are a non-increasing sequence of eigenvalues tending to zero and $\{ \hat{\phi}_{j} \}_{j=1}^{\infty}$ is an orthonormal basis of $L^{2}(I)$ consisting eigenfunctions of the integral operator with kernel $\hat{K}$, namely, 
\[
\int_{I} \hat{K}(s,t) \hat{\phi}_{j}(t) dt = \hat{\kappa}_{j} \hat{\phi}_{j}(s), \ j=1,2,\dots.
\]
The spectral expansion of $\hat{K}$ is possible since the integral operator with kernel $\hat{K}$ is of finite rank (at most $(n-1)$), and so in addition to an orthonormal system of $L^{2}(I)$ consisting of eigenfunctions corresponding to the positive eigenvalues, we can add functions so that the augmented system of functions $\{ \hat{\phi}_{j} \}_{j=1}^{\infty}$ becomes an orthonormal basis of $L^{2}(I)$. Now, let 
\[
\hat{\xi}_{i,j} = \int_{I} \{ X_{i}(t)-\overline{X}(t) \} \hat{\phi}_{j}(t)dt.
\] 
Using the characterization in (\ref{eq: characterization}), we estimate each $b_{j}$ by $\hat{b}_{j} = n^{-1} \sum_{i=1}^{n} \hat{\xi}_{i,j}Y_{i}/\hat{\kappa}_{j}$, and consider an estimator for $b$ of the form 
\[
\hat{b} (t)= \sum_{j=1}^{m_{n}} \hat{b}_{j} \hat{\phi}_{j}(t), 
\]
where $m_{n}$ is the cut-off level such that $m_{n} \to \infty$ as $n \to \infty$. \cite{HaHo07} study the properties of the PCA-based estimator $\hat{b}$ in detail and provide conditions under which the estimator is rate optimal. 

\subsection{Construction of confidence bands} 
\label{subsec: construction of CB}

For a given $\tau \in (0,1)$, a confidence band for $b$ with level $1-\tau$ is a collection of random intervals $\mathcal{C} = \{ \mathcal{C}(t)=[\ell (t), u(t)] : t \in I \}$ such that
\begin{equation}
\Pr \{ b(t) \in [\ell(t),u(t)] \ \text{for all $t \in I$} \} \geq 1-\tau.
\label{eq: requirement}
\end{equation}
In the present paper, we focus on confidence bands centered at the PCA-based estimator $\hat{b}$, thereby quantifying uncertainty of the PCA-based estimator $\hat{b}$. 
However, as discussed in Introduction, the requirement (\ref{eq: requirement}) is too stringent to our problem, and we consider here a less demanding requirement.
Namely, instead of requiring (\ref{eq: requirement}), we aim at constructing a confidence band  $\mathcal{C} = \{ \mathcal{C}(t)=[\ell (t), u(t)] : t \in I \}$ such that for given $\tau_{1},\tau_{2} \in (0,1)$, with probability at least $1-\tau_{1}$, the proportion of the set of $t$ at which $b$ is not covered by $\mathcal{C}$ is at most $\tau_{2}$, i.e., 
\begin{equation}
\Pr \left \{ \lambda \left ( \{ t \in I : b(t) \notin [\ell (t), u(t)] \} \right )   \leq  \tau_{2} \lambda (I)\right \} \geq 1-\tau_{1},
\label{eq: average requirement}
\end{equation}
where $\lambda$ denotes the Lebesgue measure. If the band $\mathcal{C}$ satisfies the new requirement (\ref{eq: average requirement}), then the band $\mathcal{C}$ covers $b$ over more than $100(1-\tau_{2}) \%$ of points in $I$ with probability at least $1-\tau_{1}$, and so as long as $\tau_{2}$ is close to $0$, the band $\mathcal{C}$ covers $b$ over ``most'' of points in $I$ with probability at least $1-\tau_{1}$. Hence the new requirement (\ref{eq: average requirement}) would be a reasonable relaxation of the former requirement (\ref{eq: requirement}). 

A relaxed coverage requirement similar to (\ref{eq: average requirement}) appears in \cite{CaLoMa14} for the purpose of constructing adaptive confidence bands in nonparametric regression. We employ the relaxed coverage requirement (\ref{eq: average requirement}) to deal with a different challenge, namely, to construct confidence bands for a series estimator with estimated basis functions. 

In what follows, we will informally present our methodology to construct a confidence band for  the PCA-based estimator $\hat{b}$ that satisfies (\ref{eq: average requirement}) asymptotically. Under some regularity conditions, it will be shown that
\begin{equation}
n \| \hat{b} - b \|^{2} = \sum_{j=1}^{m_{n}} \left ( n^{-1/2} \sum_{i=1}^{n} \varepsilon_{i} \hat{\xi}_{i,j} /\hat{\kappa}_{j} \right )^{2} + O_{\Pr} (m_{n}^{\alpha/2+1}+\sqrt{n} m_n^{-\beta+\alpha/2+1} + nm_{n}^{-2\beta + 1}),
\label{eq: expansion}
\end{equation}
where $\varepsilon_{i}=Y_{i}-a-\int_{I} b(t) [ X_{i}(t) - \Ep\{ X(t) \} ] dt$ for $i=1,\dots,n$, and the last term on the right hand side on (\ref{eq: expansion}) is (suitably) negligible relative to the first term (the parameters $\alpha$ and $\beta$ will be given in the next section). Observe that, by definition, 
\[
n^{-1} \sum_{i=1}^{n} \hat{\xi}_{i,j}\hat{\xi}_{i,k} = \iint_{I^{2}} \hat{K}(s,t) \hat{\phi}_{j}(s) \hat{\phi}_{k}(t) dsdt = 
\begin{cases}
\hat{\kappa}_{j} & \text{if $j=k$} \\
0 & \text{if $j \neq k$} 
\end{cases}
.
\]
Hence, conditionally on $X_{1}^{n} = \{ X_{1},\dots,X_{n} \}$,
\begin{equation}
\left ( n^{-1/2}\sum_{i=1}^{n} \varepsilon_{i} \hat{\xi}_{i,j} /\hat{\kappa}_{j} \right )_{j=1}^{m_{n}} 
\label{eq: random vector}
\end{equation}
is the sum of independent random vectors with mean zero, and the covariance matrix of the random vector (\ref{eq: random vector}) conditionally on $X_{1}^{n}$ is  $\Lambda_{n}= \mathrm{diag} (1/\hat{\kappa}_{1},\dots,1/\hat{\kappa}_{m_{n}})$. 
It will be shown  that, under some regularity conditions, the distribution of the random vector (\ref{eq: random vector}) 
can be approximated by that of $N(0,\sigma^{2}\Lambda_{n})$, and therefore the distribution of the first term on the right hand side of (\ref{eq: expansion}) can be approximated by that of $\sigma^{2} \sum_{j=1}^{m_{n}} \eta_{j}/\hat{\kappa}_{j}$, 
where $\eta_{1},\dots,\eta_{m_{n}}$ are independent $\chi^{2}(1)$ random variables independent from $X_{1}^{n}$. Note that when $\varepsilon$ is Gaussian, then the random vector (\ref{eq: random vector}) has exactly the same distribution as that of $N(0,\sigma^{2}\Lambda_{n})$. So for a given $\tau \in (0,1)$, let 
\[
\hat{c}_{n}(1-\tau) = \text{conditional $(1-\tau)$-quantile of $\sqrt{\sum_{j=1}^{m_{n}} \eta_{j}/\hat{\kappa}_{j}}$ given $X_{1}^{n}$},
\]
which can be computed via simulations, and consider an $L^{2}$-confidence ball for $b$ of the form 
\begin{equation}
\mathcal{B}_{n}(1-\tau) = \{ b : \| \hat{b} - b \| \leq \hat{\sigma} \hat{c}_{n}(1-\tau)/\sqrt{n} \},
\label{eq: confidence ball}
\end{equation}
where $\hat{\sigma}^{2} = n^{-1} \sum_{i=1}^{n} (Y_{i} - \overline{Y} - \sum_{j=1}^{m_{n}} \hat{b}_{j} \hat{\xi}_{i,j})^{2}$ with $\overline{Y} = n^{-1} \sum_{i=1}^{n}Y_{i}$, and $\hat{\sigma} = \sqrt{\hat{\sigma}^{2}}$. 
It will be shown that, under some regularity conditions, this confidence ball contains the slope function $b$ with probability $1-\tau+o(1)$ as $n \to \infty$. However, it is well known that an $L^{2}$-confidence ball is difficult to visualize/interpret, and 
we instead construct a confidence band for $b$ by modifying the confidence ball, borrowing an idea of \cite{JuLa03}; see also Section 5.8 in \cite{Wa06}. To be precise, we propose the following confidence band for $b$: 
\begin{equation}
\hat{\mathcal{C}} = \left \{ \hat{\mathcal{C}}(t) = \left [ \hat{b}(t)  -  \frac{\hat{\sigma}\hat{c}_{n}(1-\tau_{1})}{\sqrt{n}} \sqrt{\frac{1}{\tau_{2}\lambda (I)}}, \hat{b}(t) + \frac{\hat{\sigma}\hat{c}_{n}(1-\tau_{1})}{\sqrt{n}} \sqrt{\frac{1}{\tau_{2}\lambda (I)}} \right ] : t \in I  \right \},
\label{eq: confidence band}
\end{equation}
where $\tau_{1}$ and $\tau_{2}$ are constants such that $\tau_{1},\tau_{2} \in (0,1)$.

It follows from an argument similar to \citet[][p.95]{Wa06} that, with probability at least $1-\tau_{1}+o(1)$, the proportion of the set of $t$ at which $b$ is not covered by $\hat{\mathcal{C}}$ is at most $\tau_{2}$, namely, 
\begin{equation}
\Pr \left \{ \lambda \left ( \left \{ t \in I : b(t) \notin \hat{\mathcal{C}}(t) \right \} \right )  \leq \tau_{2} \lambda (I) \right \} \geq 1-\tau_{1} + o(1),
\label{eq: main result}
\end{equation}
so that the proposed confidence band (\ref{eq: confidence band}) satisfies the requirement (\ref{eq: average requirement}) asymptotically. 
In fact, let $U$ be a uniform random variable on $I$ independent of the data, and let $\Pr_{U}$ denote the probability with respect to $U$ only. Then 
\[
\lambda \left ( \left \{ t \in I : b(t) \notin \hat{\mathcal{C}}(t) \right \} \right ) = \lambda (I) \Pr_{U} \left \{ \sqrt{n \tau_{2} \lambda(I)} |\hat{b}(U) - b(U) |  > \hat{\sigma} \hat{c}_{n}(1-\tau_{1}) \right \},
\]
and Markov's inequality yields that the right hand side is bounded by $n\tau_{2}\lambda (I) \| \hat{b} - b \|^{2}/\{ \hat{\sigma}^{2} \hat{c}_{n}^{2}(1-\tau_{1}) \}$. 
Therefore, 
\[
\Pr \left \{ \lambda \left ( \{ t \in I : b(t) \notin \hat{\mathcal{C}}(t) \} \right ) \leq \tau_{2} \lambda (I)\right \} \geq \Pr \left \{ n \| \hat{b} - b \|^{2} \leq \hat{\sigma}^{2} \hat{c}^{2}_{n}(1-\tau_{1}) \right \} = 1-\tau_{1}+o(1),
\]
which yields the desired result. 

The values of $\tau_{1}$ and $\tau_{2}$ are chosen by users, where $1-\tau_{1}$ is the nominal level and so a popular choice of $\tau_{1}$ would be $0.1$ or $0.05$. The value of $\tau_{2}$ is the proportion of the set of points not-covered by the confidence band, and in practice we should choose $\tau_{2}$ to be small (but we should not take $\tau_{2}$ to be too small since in that case the width of the band will be too large). In the numerical studies in Section \ref{sec: numerical results}, we take $\tau_{2}=0.1$. 
In theory, it is relatively straightforward to see that we may take $\tau_{2}$ in such a way that $\tau_{2} = \tau_{2,n} \downarrow 0$, so that the proportion of the excluded domain is asymptotically vanishing.
See also Remark \ref{rem: vanishing tau2} ahead.

For computation of the quantile $\hat{c}_{n}(1-\tau_{1})$, we propose to use simulations. An alternative way to approximate the quantile $\hat{c}_{n}(1-\tau_{1})$ is to apply the central limit theorem to $\sum_{j=1}^{m_{n}} \eta_{j}/\hat{\kappa}_{j}$. 
In fact, under some regularity conditions, it holds that $\frac{1}{\sqrt{2\sum_{k=1}^{m_{n}} \hat{\kappa}_{k}^{-2}}} \sum_{j=1}^{m_{n}} \hat{\kappa}_{j}^{-1} (\eta_{j}-1) \stackrel{d}{\to} N(0,1)$,
so that $\hat{c}_{n}^{2}(1-\tau_{1})$ can be approximated as $\sum_{j=1}^{n}\hat{\kappa}_{j}^{-1} + \Phi^{-1}(1-\tau_{1})\sqrt{2\sum_{k=1}^{m_{n}} \hat{\kappa}_{k}^{-2}}$, where $\Phi$ is the distribution function of the standard normal distribution.
However, in applications, $m_{n}$ is small compared with $n$, and the above normal approximation can be imprecise. Therefore, we recommend to directly simulate the quantile $\hat{c}_{n}(1-\tau_{1})$ instead of relying on the central limit theorem. 

Note that our confidence band (\ref{eq: confidence band}) is in general conservative, namely, $\liminf_{n \to \infty} \Pr \{ \lambda (\{ t \in I : b(t) \notin \hat{\mathcal{C}}(t) \}) \leq \tau_{2}\lambda (I) \}$ is in general larger than  $1-\tau_{1}$, which is clear from the discussion above. However, the numerical studies in Section \ref{sec: numerical results} suggest that the width of our band, with the cut-off level chosen by the rule suggested in Section \ref{sec: cut-off}, is reasonably narrow in practice.

The proposed confidence bands allow a small portion of the domain to
be excluded from the confidence bands. Despite that the proposed confidence bands do not cover all points in the domain with a given level, they are able to capture a global shape of the slope function, which helps practitioners to make inference on the slope function. 
Furthermore, partly because of the conservative nature of our bands, in our numerical studies, we find that our bands tend to have reasonably good uniform coverage probabilities. Hence, we believe that the proposed methodology adds a valuable option for inference on functional linear regression.

\begin{remark}[Equivariance of the band]
\label{rem: equivariance}
It is worth noting that our confidence band (\ref{eq: confidence band}) is equivariant under location-scale changes to the index $t$. Suppose that $I=[\underline{c},\overline{c}]$, and consider a change of variable $t = \underline{c} + u (\overline{c}-\underline{c})$ for $ u \in [0,1]$. Let $X_{i}^{\dagger} (u) = X_{i}(\underline{c}+u(\overline{c}-\underline{c}))$ and $b^{\dagger}(u) = (\overline{c} -\underline{c})b(\underline{c}+u(\overline{c}-\underline{c}))$ for $u \in [0,1]$, and observe that $\int_{I} b(t)X_{i}(t)dt = \int_{0}^{1} b^{\dagger}(u) X_{i}^{\dagger}(u)du$. Furthermore, let $\hat{\kappa}_{j}^{\dagger} = \hat{\kappa}_{j}/(\overline{c}-\underline{c})$ and $\hat{\phi}_{j}^{\dagger}(u) = \sqrt{\overline{c} - \underline{c}} \hat{\phi}_{j}(\underline{c}+u(\overline{c}-\underline{c}))$ for $u \in [0,1]$. Then  $\{ (\hat{\kappa}_{j}^{\dagger},\hat{\phi}_{j}^{\dagger}) \}_{j=1}^{\infty}$ are eigenvalue/eigenfunction pairs for the empirical covariance function $\hat{K}^{\dagger}$ of $\{ X_{i}^{\dagger} \}_{i=1}^{n}$, i.e., $\int_{0}^{1} \hat{K}^{\dagger}(v,u) \hat{\phi}_{j}^{\dagger}(u) du = \hat{\kappa}_{j}^{\dagger}\hat{\phi}_{j}^{\dagger}(v)$. 
It is not difficult to see that the PCA-based estimator with cut-off level $m_{n}$ for $b^{\dagger}$ based on the data $\{ (Y_{i},X_{i}^{\dagger}) \}_{i=1}^{n}$ will be $\hat{b}^{\dagger}(u) = (\overline{c} - \underline{c})\hat{b}(\underline{c}+u(\overline{c}-\underline{c}))$ for $u \in [0,1]$. Next,  the conditional $(1-\tau_{1})$-quantile of $\sqrt{\sum_{j=1}^{m_{n}} \eta_{j}/\hat{\kappa}_{j}^{\dagger}} = \sqrt{\overline{c}-\underline{c}} \sqrt{\sum_{j=1}^{m_{n}} \eta_{j}/\hat{\kappa}_{j}}$, denoted by $\hat{c}_{n}^{\dagger}(1-\tau_{1})$, is identical to $\sqrt{\overline{c}-\underline{c}}\hat{c}_{n}(1-\tau_{1})$, and so  our confidence band applied to the data $\{ (Y_{i},X_{i}^{\dagger}) \}_{i=1}^{n}$ will be
\[
\hat{\mathcal{C}}^{\dagger}(u) = \left [ \hat{b}^{\dagger}(u) \pm \frac{\hat{\sigma}\hat{c}_{n}^{\dagger}(1-\tau_{1})}{\sqrt{n}} \sqrt{\frac{1}{\tau_{2}}} \right ] =  (\overline{c} - \underline{c}) \left [ \hat{b}(\underline{c}+u(\overline{c}-\underline{c})) \pm  \frac{\hat{\sigma}\hat{c}_{n}(1-\tau_{1})}{\sqrt{n}} \sqrt{\frac{1}{\tau_{2}(\overline{c}-\underline{c})}} \right ]
\]
for $u \in [0,1]$. Therefore, we conclude that $b^{\dagger} (u) \in \hat{\mathcal{C}}^{\dagger}(u) \Leftrightarrow b(\underline{c}+u(\overline{c}-\underline{c})) \in \hat{\mathcal{C}}(\underline{c}+u(\overline{c}-\underline{c}))$
for $u \in [0,1]$, and so $\lambda (\{ u \in [0,1] : b^{\dagger} (u) \notin \hat{\mathcal{C}}^{\dagger}(u) \} = \lambda (\{ t \in I : b(t) \notin \hat{\mathcal{C}}(t) \})/(\overline{c} - \underline{c})$. 
\end{remark}

\begin{remark}
In the present paper, we assume that entire trajectories of $X_{i}$'s are observed without measurement errors, for the simplicity of the theoretical analysis. In applications, functional predictor variables are often discretely observed with measurement errors. In such cases, a standard approach is to first estimate $X_{i}$ using smoothing techniques \citep[cf][]{YaMuWa05b,HaMuWa06}. 
\end{remark}

\subsection{Comparison with the confidence band of \cite{MuSt05}}
\label{subsec: comparison}

\cite{MuSt05} is an important pioneering work on confidence bands for the slope function in a generalized functional linear model. In the context of our model (\ref{eq: model}), their proposal reads as follows. Suppose for the sake of simplicity that $\Ep (Y)=0$ and $\Ep \{ X(t) \}=0$ for all $t \in I$. 
For a given, non-stochastic orthonormal basis $\{ \rho_{j} \}_{j=1}^{\infty}$ of $L^{2}(I)$, expand $X_{i}$ and $b$ as $X_{i}=\sum_{j}\zeta_{i,j} \rho_{j}$ and $b=\sum_{j} \theta_{j} \rho_{j}$ with $\zeta_{i,j}=\int_{I}X_{i}(t)\rho_{j}(t)dt$ and $\theta_{j}=\int_{I}b(t) \rho_{j}(t)dt$, respectively. 
Now, observe that $Y_{i}=\sum_{j} \zeta_{i,j} \theta_{j} + \varepsilon_{i}$ and obtain an estimator $\hat{\theta}=(\hat{\theta}_{1},\dots,\hat{\theta}_{m_{n}})^{T}$ of $\theta=(\theta_{1},\dots,\theta_{m_{n}})^{T}$ by regressing $Y_{i}$ on $(\zeta_{i,1},\dots,\zeta_{i,m_{n}})^{T}$, where $m_{n} \to \infty$ as $n \to \infty$.
\cite{MuSt05} show that, under some regularity conditions, $\frac{n(\hat{\theta}-\theta)^{T} (\Gamma/\sigma^{2}) (\hat{\theta}-\theta) - m_{n}}{\sqrt{2m_{n}}} \stackrel{d}{\to} N(0,1)$, 
where  $\Gamma = \{ \Ep (\zeta_{1,j} \zeta_{1,k}) \}_{1 \leq j,k \leq m_{n}}$. Based on this result, they propose the following confidence band: denote by $(e_{1},\lambda_{1}),\dots,(e_{m_{n}},\lambda_{m_{n}})$ the eigenvectors/eigenvalues of the matrix $\Gamma$, and consider 
\begin{equation}
\tilde{b}(t) \pm \sigma \sqrt{\frac{\tilde{c}_{n}(1-\tau)}{n} \sum_{j=1}^{m_{n}} \frac{\omega_{j}^{2}(t)}{\lambda_{j}}}, \ t \in I,
\label{eq: MS band}
\end{equation}
where $\omega_{j}(t) = \sum_{k=1}^{m_{n}} \rho_{k}(t) e_{j,k}$ with $e_{j} =(e_{j,1},\dots,e_{j,m_{n}})^{T}$, and $\tilde{c}_{n}(1-\tau)=m_{n} + \sqrt{2m_{n}} \Phi^{-1}(1-\tau)$. To compare our band (\ref{eq: confidence band}) with (\ref{eq: MS band}), assume that the covariance function $K$ is known for (\ref{eq: MS band}) and use the eigenfunctions $\{ \phi_{j} \}_{j=1}^{\infty}$ for $\{ \rho_{j} \}_{j=1}^{\infty}$. In that case, the band (\ref{eq: MS band}) is of the form 
\begin{equation}
\tilde{b}(t) \pm \sigma \sqrt{\frac{\tilde{c}_{n}(1-\tau)}{n} \sum_{j=1}^{m_{n}} \frac{\phi_{j}^{2}(t)}{\kappa_{j}}}, \ t \in I.
\label{eq: MS band2}
\end{equation}

In the theoretical analysis, \cite{MuSt05} work with non-stochastic basis functions, and furthermore, strictly speaking, they only prove that the band (\ref{eq: MS band}) is a valid confidence band for the surrogate function $\sum_{j=1}^{m_{n}} \theta_{j} \rho_{j}$ (i.e., they prove that the band (\ref{eq: MS band}) contains $\sum_{j=1}^{m_{n}} \theta_{j} \rho_{j}(t)$ for all $t \in I$ with probability at least $1-\tau+o(1)$), but not for the slope function $b$ itself. Hence it is not formally known or at least a non-trivial question whether the band (\ref{eq: MS band2}) is valid for $b$ when the estimated eigenfunctions $\{ \hat{\phi}_{j} \}_{j=1}^{\infty}$ are used. It would be possible to show that, under suitable regularity conditions, the band (\ref{eq: MS band2}), with $(\kappa_{j},\phi_{j})$ replaced by $(\hat{\kappa}_{j},\hat{\phi}_{j})$, contains the (random) surrogate function $\sum_{j=1}^{m_{n}} \breve{b}_{j} \hat{\phi}_{j}$ with probability at least $1-\tau + o(1)$, where $\breve{b}_{j} = \int_{I} b (t) \hat{\phi}_{j}(t)dt$. However, to show that the band is valid for $b$ (i.e., to show that the band contains $b(t)$ for all $t \in I$ with probability at least $1-\tau+o(1)$), we would have to show that the supremum bias $\sup_{t \in I}| b(t)-  \sum_{j=1}^{m_{n}} \breve{b}_{j} \hat{\phi}_{j}(t) |$ (which is random) is negligible relative to the infimum  width of the band, which is highly non-trivial. 


\section{Theoretical guarantees}
\label{sec: theoretical guarantees}

In this section, we study validity of our confidence band. We separately consider the cases where the error distribution is Gaussian or not. 

\subsection{Case with Gaussian errors}

We first consider the case where the error distribution is Gaussian. 
In this case,  we make the following conditions. 
\begin{assumption}
\label{as: 1}
There exist constants $\alpha >1, \beta > \alpha/2+3/2$, and $C_{1} > 1$ such that (i) $\Ep (\| X \|^{2}) < \infty$ and $\Ep( \xi_{j}^{4} ) \leq C_{1} \kappa_{j}^{2} \ \text{for all} \ j=1,2,\dots$; (ii) $\kappa_{j} \leq C_{1} j^{-\alpha}$ and $\kappa_{j} - \kappa_{j+1} \geq C_{1}^{-1} j^{-\alpha-1} \ \text{for all} \ j =1,2,\dots$; (iii) $| b_{j} | \leq C_{1}j^{-\beta} \ \text{for all} \ j=1,2,\dots$; (iv) $m_{n}^{2\alpha+2}/n \to 0$ and $m_{n}^{\alpha+2\beta-1}/n \to \infty$. 
\end{assumption}
Conditions (i)--(iii) are adapted from \cite{HaHo07} and now (more or less) standard in the theoretical analysis of PCA-based estimators \citep[cf.][]{CaHa06,Me11,Le14,KoXuYaZh16}. Estimation of the slope function $b$ is an ill-posed inverse problem (as discussed in \cite{HaHo07}), and the value of $\alpha$ that appears in Condition (ii) measures the ``ill-posedness'' of the estimation problem (the larger $\alpha$ is, the more difficult estimation of $b$ will be). The second part of Condition (ii), which ensures sufficient estimation accuracy of the empirical eigenfunctions, also implies that $\kappa_{j} \geq j^{-\alpha}/(C_{1}\alpha)$ for all $j=1,2,\dots$. Condition (iii) is concerned with smoothness of $b$. 
The requirement that $m_{n}^{2\alpha+2}/n \to 0$ is a technical condition used to control estimation errors of the empirical eigenfunctions. The last condition, $m_{n}^{\alpha+2\beta-1}/n \to 0$, can be interpreted as an ``undersmoothing'' condition.
From \cite{HaHo07}, the optimal rate of $m_{n}$ for estimation is $m_{n} \sim n^{1/(\alpha+2\beta)}$, but the last condition requires that $m_{n}$ has to be of larger order than the optimal one in order that the bias is negligible relative to the ``variance'' term. Such an undersmoothing condition is commonly used in construction of confidence bands. See Section 5.7 in \cite{Wa06} for related discussions. We will discuss practical choice of the cut-off level in the next section. Note that to ensure that Condition (iv) is non-void, we need that $\beta > \alpha/2+3/2$. 

\begin{theorem}
\label{thm: main}
For given $\tau_{1},\tau_{2} \in (0,1)$, consider the confidence band $\hat{\mathcal{C}}$ defined in (\ref{eq: confidence band}). Let $\varepsilon \sim N(0,\sigma^{2})$. Then under Assumption \ref{as: 1}, 
the result (\ref{eq: main result}) holds 
as $n \to \infty$. Furthermore, the width of the band $\hat{\mathcal{C}}$ is $O_{\Pr} (\sqrt{m_{n}^{\alpha+1}/n})$.  
\end{theorem}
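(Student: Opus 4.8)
The plan is to reduce the statement to a coverage statement for the $L^{2}$-ball $\mathcal{B}_{n}(1-\tau_{1})$ of (\ref{eq: confidence ball}) and then to analyze that ball essentially exactly. By the Markov-inequality argument given in Section \ref{subsec: construction of CB}, the left-hand side of (\ref{eq: main result}) is bounded below by $\Pr\{ n\|\hat{b} - b\|^{2} \le \hat{\sigma}^{2} \hat{c}_{n}^{2}(1-\tau_{1})\}$, so it suffices to prove that this probability equals $1-\tau_{1} + o(1)$. Writing the expansion (\ref{eq: expansion}) as $n\|\hat{b} - b\|^{2} = S_{n} + R_{n}$, where $S_{n} = \sum_{j=1}^{m_{n}}(n^{-1/2}\sum_{i=1}^{n} \varepsilon_{i} \hat{\xi}_{i,j}/\hat{\kappa}_{j})^{2}$ and $R_{n} = O_{\Pr}(m_{n}^{\alpha/2+1} + \sqrt{n}\, m_{n}^{-\beta+\alpha/2+1} + n m_{n}^{-2\beta+1})$, the whole argument comes down to comparing the law of $S_{n}$ with the simulated reference law defining $\hat{c}_{n}(1-\tau_{1})$, while showing that $R_{n}$ and the error $\hat{\sigma}^{2} - \sigma^{2}$ are negligible on the relevant scale.

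The key exact fact in the Gaussian case is that, conditionally on $X_{1}^{n}$, the vector (\ref{eq: random vector}) is $N(0,\sigma^{2}\Lambda_{n})$, so that $S_{n}/\sigma^{2}$ has, conditionally on $X_{1}^{n}$, exactly the law of $T_{n} := \sum_{j=1}^{m_{n}}\eta_{j}/\hat{\kappa}_{j}$ with $\eta_{j}$ i.i.d. $\chi^{2}(1)$; in particular $\hat{c}_{n}^{2}(1-\tau_{1})$ is precisely the conditional $(1-\tau_{1})$-quantile of $S_{n}/\sigma^{2}$. Set $V_{n} = \sum_{j=1}^{m_{n}}\hat{\kappa}_{j}^{-1}$ and $W_{n} = (2\sum_{j=1}^{m_{n}}\hat{\kappa}_{j}^{-2})^{1/2}$, the conditional mean and standard deviation of $T_{n}$. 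Using the eigenvalue bounds in Assumption \ref{as: 1}(ii) together with $m_{n}^{2\alpha+2}/n\to 0$ (which gives $\hat{\kappa}_{j} \asymp \kappa_{j} \asymp j^{-\alpha}$ uniformly over $j \le m_{n}$ via Weyl-type perturbation bounds), one gets $V_{n} \asymp m_{n}^{\alpha+1}$, $W_{n} \asymp m_{n}^{\alpha+1/2}$, and $\hat{\kappa}_{m_{n}}^{-1}/W_{n} \asymp m_{n}^{-1/2}\to 0$. This last bound verifies the Lindeberg/Lyapunov condition, so the conditional central limit theorem gives $(T_{n} - V_{n})/W_{n} \stackrel{d}{\to} N(0,1)$ (in probability over $X_{1}^{n}$), and hence the quantile expansion $\hat{c}_{n}^{2}(1-\tau_{1}) = V_{n} + \Phi^{-1}(1-\tau_{1}) W_{n} + o_{\Pr}(W_{n})$. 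I would also check here, via the same orders, that $R_{n} = o_{\Pr}(W_{n})$: each of the three remainder terms divided by $m_{n}^{\alpha+1/2}$ tends to zero precisely under Assumption \ref{as: 1}(iv) (the term $\sqrt{n}\, m_{n}^{-\beta+\alpha/2+1}$ being the binding one, requiring $m_{n}^{\alpha+2\beta-1}/n\to\infty$).

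With these pieces I would finish by a Slutsky argument. Let $G_{n} = (S_{n}/\sigma^{2} - V_{n})/W_{n}$; by the previous paragraph $G_{n} \stackrel{d}{\to} N(0,1)$ (conditionally, hence unconditionally by bounded convergence of the conditional distribution functions). The event $\{S_{n} + R_{n} \le \hat{\sigma}^{2} \hat{c}_{n}^{2}(1-\tau_{1})\}$ is $\{G_{n} \le H_{n}\}$ with
\[
H_{n} = \frac{(\hat{\sigma}^{2}/\sigma^{2})\,\hat{c}_{n}^{2}(1-\tau_{1}) - V_{n} - R_{n}/\sigma^{2}}{W_{n}}.
\]
Substituting the quantile expansion and writing $\hat{\sigma}^{2}/\sigma^{2} = 1 + \rho_{n}$, the numerator equals $\Phi^{-1}(1-\tau_{1})W_{n} + \rho_{n} V_{n} + o_{\Pr}(W_{n}) - R_{n}/\sigma^{2}$; since $V_{n}/W_{n} \asymp m_{n}^{1/2}$ and $R_{n}/W_{n} = o_{\Pr}(1)$, one obtains $H_{n} = \Phi^{-1}(1-\tau_{1}) + o_{\Pr}(1)$ provided $\rho_{n} = o_{\Pr}(m_{n}^{-1/2})$, i.e. provided $\hat{\sigma}^{2} - \sigma^{2} = o_{\Pr}(m_{n}^{-1/2})$. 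Because $G_{n} \stackrel{d}{\to} N(0,1)$ and $H_{n} \to \Phi^{-1}(1-\tau_{1})$ in probability, Slutsky's theorem (which tolerates the dependence between $G_{n}$ and $H_{n}$) yields $\Pr\{G_{n} \le H_{n}\} \to \Phi(\Phi^{-1}(1-\tau_{1})) = 1-\tau_{1}$, proving (\ref{eq: main result}). For the width claim, the half-width is $\hat{\sigma}\,\hat{c}_{n}(1-\tau_{1})n^{-1/2}(\tau_{2}\lambda(I))^{-1/2}$; since $\hat{\sigma} = O_{\Pr}(1)$ and $\hat{c}_{n}(1-\tau_{1}) = \{V_{n} + o_{\Pr}(V_{n})\}^{1/2} = O_{\Pr}(m_{n}^{(\alpha+1)/2})$, this is $O_{\Pr}(\sqrt{m_{n}^{\alpha+1}/n})$.

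The main obstacle, in my view, is establishing the rate $\hat{\sigma}^{2} - \sigma^{2} = o_{\Pr}(m_{n}^{-1/2})$ for the residual-based variance estimator. This rate is forced by the scale mismatch $V_{n}/W_{n} \asymp m_{n}^{1/2}\to\infty$, which amplifies any relative error in $\hat{\sigma}^{2}$ by the factor $m_{n}^{1/2}$ when it is measured against the fluctuation scale $W_{n}$, so mere consistency of $\hat{\sigma}^{2}$ does not suffice. While the oracle part $n^{-1}\sum_{i}\varepsilon_{i}^{2} - \sigma^{2} = O_{\Pr}(n^{-1/2}) = o_{\Pr}(m_{n}^{-1/2})$ is harmless (as $m_{n} = o(n)$), one must still show that the truncation bias (of order $m_{n}^{-(\alpha+2\beta-1)}$, negligible because $\beta > \alpha/2 + 3/2$) and the errors incurred by replacing $(\kappa_{j},\phi_{j},\xi_{i,j})$ with their empirical counterparts are all $o_{\Pr}(m_{n}^{-1/2})$; this is where the condition $m_{n}^{2\alpha+2}/n\to 0$ and the eigenfunction perturbation estimates are genuinely used. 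A secondary technical point is making the conditional central limit theorem and the induced quantile convergence uniform (e.g. via P\'olya's theorem) so that they hold in probability over $X_{1}^{n}$.
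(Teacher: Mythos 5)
Your proposal is correct and reaches the conclusion by a genuinely different route on the central distributional step. You share with the paper the Markov-inequality reduction to the $L^{2}$-ball, the decomposition $n\|\hat{b}-b\|^{2}=S_{n}+R_{n}$ with $R_{n}=o_{\Pr}(m_{n}^{\alpha+1/2})$, the exact conditional identification of $S_{n}/\sigma^{2}$ with $\sum_{j\le m_{n}}\eta_{j}/\hat{\kappa}_{j}$, and the requirement $\hat{\sigma}^{2}-\sigma^{2}=o_{\Pr}(m_{n}^{-1/2})$ (the paper's Step 3), which you rightly single out as the binding rate. Where you diverge is in how the perturbations $R_{n}$ and $(\hat{\sigma}^{2}-\sigma^{2})\hat{c}_{n}^{2}(1-\tau_{1})$ are shown not to move the coverage probability: the paper does this without any central limit theorem, via the anti-concentration and concentration inequalities for weighted sums of $\chi^{2}(1)$ variables in Lemma \ref{lem: chi-square}, which give $\sup_{z>0}\Pr(|\sum_{j}\eta_{j}/\hat{\kappa}_{j}-z|\le h)\lesssim \{h/(\sum_{j}\hat{\kappa}_{j}^{-2})^{1/2}\}^{1/2}$ and $\hat{c}_{n}^{2}(1-\tau_{1})=O_{\Pr}(m_{n}^{\alpha+1})$; you instead prove a conditional Lyapunov CLT for $T_{n}$, extract the quantile expansion $\hat{c}_{n}^{2}(1-\tau_{1})=V_{n}+\Phi^{-1}(1-\tau_{1})W_{n}+o_{\Pr}(W_{n})$, and close with Slutsky. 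Both arguments hinge on the identical scale comparison (all perturbations must be $o_{\Pr}(W_{n})$ with $W_{n}\asymp m_{n}^{\alpha+1/2}$), and your order checks --- that $\sqrt{n}\,m_{n}^{-\beta+\alpha/2+1}$ is the binding remainder term under $m_{n}^{\alpha+2\beta-1}/n\to\infty$, and that $\rho_{n}V_{n}/W_{n}\asymp\rho_{n}m_{n}^{1/2}$ forces the variance-estimator rate --- are all right. The paper's anti-concentration route is somewhat more robust, since it is reused essentially verbatim in the non-Gaussian Theorem \ref{thm: main2}, where the conditional law of $S_{n}$ is only approximated over convex sets and a clean quantile expansion is not available; your CLT route buys the explicit location of $\hat{c}_{n}^{2}(1-\tau_{1})$, and hence the width bound, for free. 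The one substantive item you take for granted is the expansion (\ref{eq: expansion}) itself: in the paper this is Step 1, a nontrivial computation resting on the Hall--Horowitz eigenfunction perturbation bounds and on $\sum_{j\le m_{n}}(\breve{b}_{j}-b_{j})^{2}=O_{\Pr}(n^{-1})$, and a complete write-up would need to supply it (as well as the details of Step 3 that you correctly identify as the main technical obstacle).
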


The proof of Theorem \ref{thm: main} consists of approximating the distribution of $n\| \hat{b} - b \|^{2}$ by that of $\sigma^{2} \sum_{j=1}^{m_{n}} \eta_{j}/\hat{\kappa}_{j}$ where $\eta_{1},\dots,\eta_{m_{n}}$ are independent $\chi^{2}(1)$ random variables independent of $X_{1}^{n}$, but since the approximating distribution also depends on $n$ (and random), the proof of the theorem is non-trivial. 
To formally show that the error of the stochastic approximation in (\ref{eq: expansion}) is negligible for the distributional approximation, we rely on concentration and anti-concentration inequalities for a weighted sum of independent $\chi^{2}(1)$ random variables; see Lemma \ref{lem: chi-square}.

\begin{remark}
\label{rem: vanishing tau2}
Inspection of the proof shows that the result (\ref{eq: main result}) holds even if we choose $\tau_{2} = \tau_{2,n} \downarrow 0$. The width of the band is then $O_{\Pr}\{ \sqrt{m_{n}^{\alpha+1}/(n\tau_{2,n})} \}$. 
\end{remark}

\begin{remark}[Uniformity in distribution]
The coverage result (\ref{eq: main result}) holds uniformly over a certain class of distributions of $(Y,X)$. For given $\alpha > 1, \beta > \alpha/2+3/2$, and $C_{1} > 1$, let $\mF_{\text{Normal}}(\alpha,\beta,C_{1})$ be the class of distributions of $(Y,X)$ that verify (\ref{eq: model}) and Conditions (i)--(iii) in Assumption \ref{as: 1}, and  such that $\varepsilon \sim N(0,\sigma^{2})$ is independent from $X$ with $C_{1}^{-1} \leq \sigma^{2} \leq C_{1}$. Then, provided that $m_{n}^{2\alpha+2}/n \to 0$ and $m_{n}^{\alpha+2\beta-1}/n \to \infty$, we have 
\begin{equation}
\liminf_{n \to \infty} \inf_{F \in \mF_{\text{Normal}}(\alpha,\beta,C_{1})} \Pr_{F} \left \{ \lambda \left ( \left \{ t \in I : b(t) \notin \hat{\mathcal{C}}(t) \right \} \right )  \leq \tau_{2} \lambda (I) \right \} \geq 1-\tau_{1},
\label{eq: uniformity}
\end{equation}
where $\Pr_{F}$ denotes the probability under $F$. In fact, to show (\ref{eq: uniformity}), it is enough to verify that for any sequence $F_{n} \in \mF_{\text{Normal}}(\alpha,\beta,C_{1})$, the result (\ref{eq: main result}) holds for $(Y_{1},X_{1}),\dots,(Y_{n},X_{n}) \sim F_{n}$ i.i.d. for $n \geq 1$, which is not difficult to verify in view of the proof of Theorem \ref{thm: main}. Furthermore, the result (\ref{eq: uniformity}) also holds even if $\tau_{2}=\tau_{2,n} \downarrow 0$. A similar comment applies to Theorem \ref{thm: main2} below. 
\end{remark}

\subsection{Case with non-Gaussian errors}

Next, we consider the case where the error distribution is possibly non-Gaussian. Instead of Assumption \ref{as: 1}, we make the following conditions. For $q>1$ and $\alpha>0$, let $c(q,\alpha)= \max \{2\alpha+2, 7/(2-2/q) \}$. 

\begin{assumption}
\label{as: 2}
There exist an integer $q \geq 2$ and constants $\alpha >1, \beta > \{ c(q,\alpha) -\alpha +1\}/2$, and $C_{1} > 0$ such that
\begin{align}
&\Ep( \xi_{j}^{2q} ) \leq C_{1} \kappa_{j}^{q} \ \text{for all} \ j=1,2,\dots, \label{eq: moment condition2} \\
\intertext{and Conditions  (ii) and (iii) in Assumption \ref{as: 1} are satisfied. Furthermore, assume that } 
&m_{n}^{c(q,\alpha)}/n \to 0 \quad \text{and} \quad m_{n}^{\alpha+2\beta-1}/n \to \infty. \label{eq: condition m2}
\end{align}
\end{assumption}

These conditions guarantee that all the conclusions of Theorem \ref{thm: main} remain valid even when the error is non-Gaussian. 

\begin{theorem}
\label{thm: main2}
Suppose that $\varepsilon$ has mean zero and variance $\sigma^{2} > 0$, and that $\Ep[ \varepsilon^{4} ] < \infty$. Then under Assumption \ref{as: 2}, all the conclusions of Theorem \ref{thm: main} remain true. 
\end{theorem}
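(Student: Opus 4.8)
The plan is to reduce Theorem \ref{thm: main2} to the Gaussian case (Theorem \ref{thm: main}) by showing that the only place where Gaussianity of $\varepsilon$ was used---namely, the exact distributional identity between the random vector (\ref{eq: random vector}) and $N(0,\sigma^{2}\Lambda_{n})$---can be replaced by a Gaussian approximation for the sum of independent random vectors that holds under finite fourth moments. Concretely, both proofs hinge on approximating the distribution of $n\|\hat{b}-b\|^{2}$ by that of $\sigma^{2}\sum_{j=1}^{m_{n}}\eta_{j}/\hat{\kappa}_{j}$; in the Gaussian case this approximation is exact conditionally on $X_{1}^{n}$ (modulo the negligible remainder in (\ref{eq: expansion})), whereas in the non-Gaussian case I would establish it via a high-dimensional central limit theorem applied \emph{conditionally} on $X_{1}^{n}$.

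First I would verify that the stochastic expansion (\ref{eq: expansion}) continues to hold under Assumption \ref{as: 2}. This step is driven by the moment condition (\ref{eq: moment condition2}) with exponent $2q$ (rather than the fourth-moment condition $\Ep(\xi_{j}^{4})\leq C_{1}\kappa_{j}^{2}$ used in the Gaussian case) together with Conditions (ii)--(iii), and it should go through essentially as in the Gaussian proof, since the expansion concerns $\hat{b}-b$ and does not involve the error distribution beyond $\Ep(\varepsilon^{2})=\sigma^{2}$; the higher-moment control on $\xi_{j}$ is what replaces Gaussianity in bounding the eigenfunction estimation errors. Second, I would show that conditionally on $X_{1}^{n}$, the quadratic form $\sum_{j=1}^{m_{n}}(n^{-1/2}\sum_{i=1}^{n}\varepsilon_{i}\hat{\xi}_{i,j}/\hat{\kappa}_{j})^{2}$ has the same limiting behavior as $\sigma^{2}\sum_{j=1}^{m_{n}}\eta_{j}/\hat{\kappa}_{j}$. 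Because the summands are independent across $i$ with conditional covariance $\sigma^{2}\Lambda_{n}$, the key is a Gaussian coupling/approximation for the $m_{n}$-dimensional vector (\ref{eq: random vector}) whose error is controlled by $\Ep[\varepsilon^{4}]<\infty$ and the moment bounds on $\hat{\xi}_{i,j}$; the dimension $m_{n}$ is permitted to grow, and the quantity $c(q,\alpha)$ in Assumption \ref{as: 2} is precisely calibrated so that $m_{n}^{c(q,\alpha)}/n\to0$ makes this approximation error negligible relative to the scale $\sigma^{2}\sum_{j}\hat{\kappa}_{j}^{-1}\asymp m_{n}^{\alpha+1}$ of the target.

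The third step reuses verbatim the concentration and anti-concentration inequalities for weighted sums of $\chi^{2}(1)$ variables (Lemma \ref{lem: chi-square}) to convert the distributional approximation of the quadratic form into an approximation of the quantile and of the coverage probability, exactly as in the Gaussian proof; I would also confirm that $\hat{\sigma}^{2}\to\sigma^{2}$ in probability under the weaker moment assumptions, so that the estimated variance does not disturb the calibration. The width statement $O_{\Pr}(\sqrt{m_{n}^{\alpha+1}/n})$ then follows from $\hat{c}_{n}^{2}(1-\tau_{1})\asymp\sum_{j}\hat{\kappa}_{j}^{-1}\asymp m_{n}^{\alpha+1}$ together with consistency of $\hat{\sigma}$, identically to the Gaussian argument.

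The main obstacle is the second step: establishing a sufficiently sharp conditional Gaussian approximation for the growing-dimensional vector (\ref{eq: random vector}) using only finite $(2q)$-th moments of $\xi_{j}$ and finite fourth moments of $\varepsilon$. The difficulty is twofold---first, the approximation must be in a form (e.g. in terms of the quadratic form, or via a coupling) whose error is controlled after multiplication by the ill-conditioned weights $\hat{\kappa}_{j}^{-1}$, which blow up like $j^{\alpha}$; second, everything is conditional on $X_{1}^{n}$, so I must show that the conditional moments of $\hat{\xi}_{i,j}$ and the eigenvalues $\hat{\kappa}_{j}$ concentrate around their population counterparts with high probability, uniformly in $j\leq m_{n}$. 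Tracking how the moment order $q$ trades off against the growth rate of $m_{n}$---which is exactly the content of the definition of $c(q,\alpha)$---is where the bulk of the technical work will lie, and it is the reason the admissible range of $\beta$ in Assumption \ref{as: 2} differs from that in Assumption \ref{as: 1}.
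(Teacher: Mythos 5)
Your proposal follows essentially the same route as the paper: the paper reduces the non-Gaussian case to showing $\sup_{z>0}|\Pr_{\varepsilon}(n\|I_n\|^2/\sigma^2\leq z)-\Pr_{\eta}(\sum_{j=1}^{m_n}\eta_j/\hat{\kappa}_j\leq z)|\stackrel{\Pr}{\to}0$ and establishes this by applying Bentkus's (2005) multivariate Berry--Esseen bound over convex sets to the vectors $W_i=\{\varepsilon_i\hat{\xi}_{i,j}/(\sigma\sqrt{n}\hat{\kappa}_j)\}_{j=1}^{m_n}$ conditionally on $X_1^n$, with the conditional third moments controlled via the $2q$-th moment condition on $\xi_j$ and the eigenfunction error bounds, yielding an error $O_{\Pr}\{m_n^{7/4}/n^{1/2-1/(2q)}\}$ that vanishes precisely under $m_n^{c(q,\alpha)}/n\to0$. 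This is exactly the conditional high-dimensional CLT strategy you describe, with the remaining steps (the expansion, Lemma \ref{lem: chi-square}, and consistency of $\hat{\sigma}^2$) reused from the Gaussian proof as you propose.
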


In comparison with the Gaussian error case, we require more restrictive conditions (note that if $\Ep(\xi_{j}^{2q}) \leq C_{1}\kappa_{j}^{q}$ for some $q \geq 2$, then $\Ep (\xi_{j}^{4}) \leq \{ \Ep (\xi_{j}^{2q}) \}^{2/q} \leq C_{1}^{2/q} \kappa_{j}^{2}$). These additional conditions are used to apply a high-dimensional central limit theorem of \cite{Be05}. Condition (\ref{eq: moment condition2}) is satisfied for all $q \geq 2$ if $X$ is Gaussian. Conditions similar to (\ref{eq: moment condition2}) are employed also in e.g. \cite{CaHa06} and \cite{HiMaVe13}. 
If we may take $q$ to be sufficiently large, namely, $q > (4\alpha+4)/(4\alpha-3)$, then the conditions on $m_{n}$ reduce to the ones in the Gaussian error case. 

\section{Choice of cut-off levels}
\label{sec: cut-off}

For the proposed confidence band to work in practice, the choice of the cut-off level $m_{n}$ is crucial. In theory, we should choose $m_{n}$ so that it is of larger order than the optimal rate $n^{1/(\alpha+2\beta)}$ for estimation under the $L^{2}$-risk. 
The idea here is to construct an estimate of the $L^{2}$-risk of $\hat{b}$ with given cut-off level $m$, and to choose a cut-off level slightly larger than the optimal cut-off level that minimizes the estimate of the $L^{2}$-risk. 
Construction of an estimate of the $L^{2}$-risk of $\hat{b}$ is inspired by \cite{CaGoPiTs02}. 
Recall that $b$ is written as $b(t) = \sum_{j=1}^{\infty} b_{j} \phi_{j}(t) = \sum_{j=1}^{\infty} (c_{j}/\kappa_{j}) \phi_{j}(t)$,
where $c_{j} =\Ep (\xi_{j}Y)$. 
Suppose first that the covariance function $K$ is known, and consider, for a given cut-off level $m$,  the estimator 
\[
\hat{b}^{*}(t;m) = \sum_{j=1}^{m} \hat{b}_{j}^{*} \phi_{j}(t) = \sum_{j=1}^{m} \frac{\hat{c}^{*}_{j}}{\kappa_{j}} \phi_{j}(t),
\]
where $\hat{c}^{*}_{j} = n^{-1} \sum_{i=1}^{n} \xi_{i,j}Y_{i}$  and $\hat{b}_{j}^{*} = \hat{c}_{j}^{*}/\kappa_{j}$. Let $R^{*}(m)$ denote the $L^{2}$-risk of the estimator $\hat{b}^{*}(\cdot ; m)$, namely,
\[
R^{*}(m) = \Ep[\| \hat{b}^{*}(\cdot ; m) - b \|^{2}] = \sum_{j > m} b_{j}^{2}+ \sum_{j=1}^{m} \frac{\Var (\hat{c}^{*}_{j})}{\kappa_{j}^{2}} = \| b\|^{2} - \sum_{j=1}^{m} b_{j}^{2} + \frac{1}{n}\sum_{j=1}^{m} \frac{\Var (\xi_{j}Y)}{\kappa_{j}^{2}}.
\]
Minimizing $R^{*}(m)$ is equivalent to minimizing 
\[
\check{R}^{*}(m) = - \sum_{j=1}^{m} b_{j}^{2} + \frac{1}{n}\sum_{j=1}^{m} \frac{\Var (\xi_{j}Y)}{\kappa_{j}^2}.
\]
Still $\check{R}^{*}(m)$ is unknown, but we may estimate $\check{R}^{*}(m)$ by 
\[
\hat{R}^{*}(m) = -\sum_{j=1}^{m} (\hat{b}_{j}^{*})^{2} +\frac{2}{n(n-1)} \sum_{j=1}^{m} \frac{\sum_{i=1}^{n} ( \xi_{i,j}Y_{i} - \hat{c}^{*}_{j})^{2}}{\kappa_{j}^{2}}.
\]
In fact, since $\Ep[(\hat{b}_{j}^{*})^{2}] = b_{j}^{2} + \Var (\hat{c}_{j}^{*})/\kappa_{j}^{2}$, $\hat{R}^{*}(m)$ is an unbiased estimator of $\check{R}^{*}(m)$. 

In practice, $K$ is unknown, and so we replace $K$ by $\hat{K}$, and for our estimator $\hat{b}$, we use 
\[
\hat{R}(m) = -\sum_{j=1}^{m} \hat{b}_{j}^{2} + \frac{2}{n(n-1)} \sum_{j=1}^{m} \frac{\sum_{i=1}^{n} (\hat{\xi}_{i,j}Y_{i} - \hat{c}_{j})^{2}}{\hat{\kappa}_{j}^{2}},
\]
as an estimate of the $L^{2}$-risk of $\hat{b}$ with cut-off level $m$, where $\hat{c}_{j} = n^{-1}\sum_{i=1}^{n}\hat{\xi}_{i,j}Y_{i}$. Now, let $\hat{m}_{n}$ be a minimizer of $\hat{R}(m)$ over a candidate set chosen by users;
our recommendation is to choose either $\max \{ \hat{m}_{n}, 2 \}$ or $\hat{m}_{n}+1$ for construction of the proposed confidence band.

\section{Numerical results}
\label{sec: numerical results}

\subsection{Simulations} \label{sec:numerical_simu}

We consider the following data generating process. Let $I=[0,1], \phi_{1} \equiv 1$ and $\phi_{j+1}(t) = 2^{1/2} \cos (j\pi t) \ \text{for} \ j =1,2\dots$, and generate $(X,Y)$ as follows:
\begin{align*}
&Y = \int_{I} b(t) X(t)dt + \varepsilon, \ X = \sum_{j=1}^{50} j^{-\alpha/2} U_{j} \phi_{j},  \ b = \sum_{j=1}^{50} b_{j} \phi_{j} , \ b_{1} = 1, b_{j} = 4(-1)^{j} j^{-\beta} \ \text{for} \ j \neq 1, 
\end{align*}
where $U_{j} \sim \mathrm{Unif}. [-3^{1/2},3^{1/2}]$ are independent. The distribution of the error term $\varepsilon$ is either $N(0,1)$ or normalized $\chi^{2}(5)$.
We consider the following configurations for $(\alpha,\beta)$: $\alpha \in \{ 1.1, 2 \}$ and $\beta \in \{ 2.6, 3.2 \}$. 
We construct confidence bands of the form (\ref{eq: confidence band}) with $\tau_{1} =\tau_2= 0.1$, and examine the following sample sizes: $n \in \{ 100,200,\dots,1000 \}$.
We evaluate the confidence bands via
\[
\textsc{UCP} = \Pr \{ b(t) \in \hat{\mathcal{C}}(t) \ \forall t \in I \} \quad \text{and} \quad \textsc{MCP} = \Pr \left \{ \lambda \left ( \left \{ t \in I :  b (t) \notin \hat{\mathcal{C}}(t) \right \} \right ) \leq \tau_{2} \right \},
\]
where UCP  signifies ``uniform coverage probability'' while MCP signifies ``modified coverage probability''. 
We compare the performance of our confidence band \eqref{eq: confidence band} with that of the M\"{u}ller-Stadm\"{u}ller (MS) band \eqref{eq: MS band2}  where we replace $(\kappa_{j},\phi_{j})$ and $\sigma$ by $(\hat{\kappa}_{j},\hat{\phi}_{j})$ and $\hat{\sigma}$, respectively (we have also examined a version of the MS band by replacing $\tilde{c}_{n}(1-\tau_{1})$ with the $(1-\tau_{1})$-quantile of the $\chi^{2}(m_{n})$-distribution, trying to improve upon the performance of the MS band; however, we have obtained almost similar results to the ones presented below for that version). Recall that our band aims at controlling MCP at level $1-\tau_{1}$, while the MS band aims at controlling UCP at level $1-\tau_{1}$. 
The number of Monte Carlo repetitions in each of the following experiments is $2000$. 
Computations of integrals and evaluations of MCPs and UCPs are carried out via discretizing the unit interval $[0,1]$ into 50 equally spaced grids. For computation of $\hat{m}_{n}$ discussed in the previous section, we have to choose a set of candidate cut-off levels. In this simulation study, we take $\{ 1,\dots, 10 \}$ as a set of candidate cut-off levels.

\begin{figure}[htbp]
 \begin{center}
\begin{minipage}{0.23\hsize}
\includegraphics[width=0.99\hsize]{./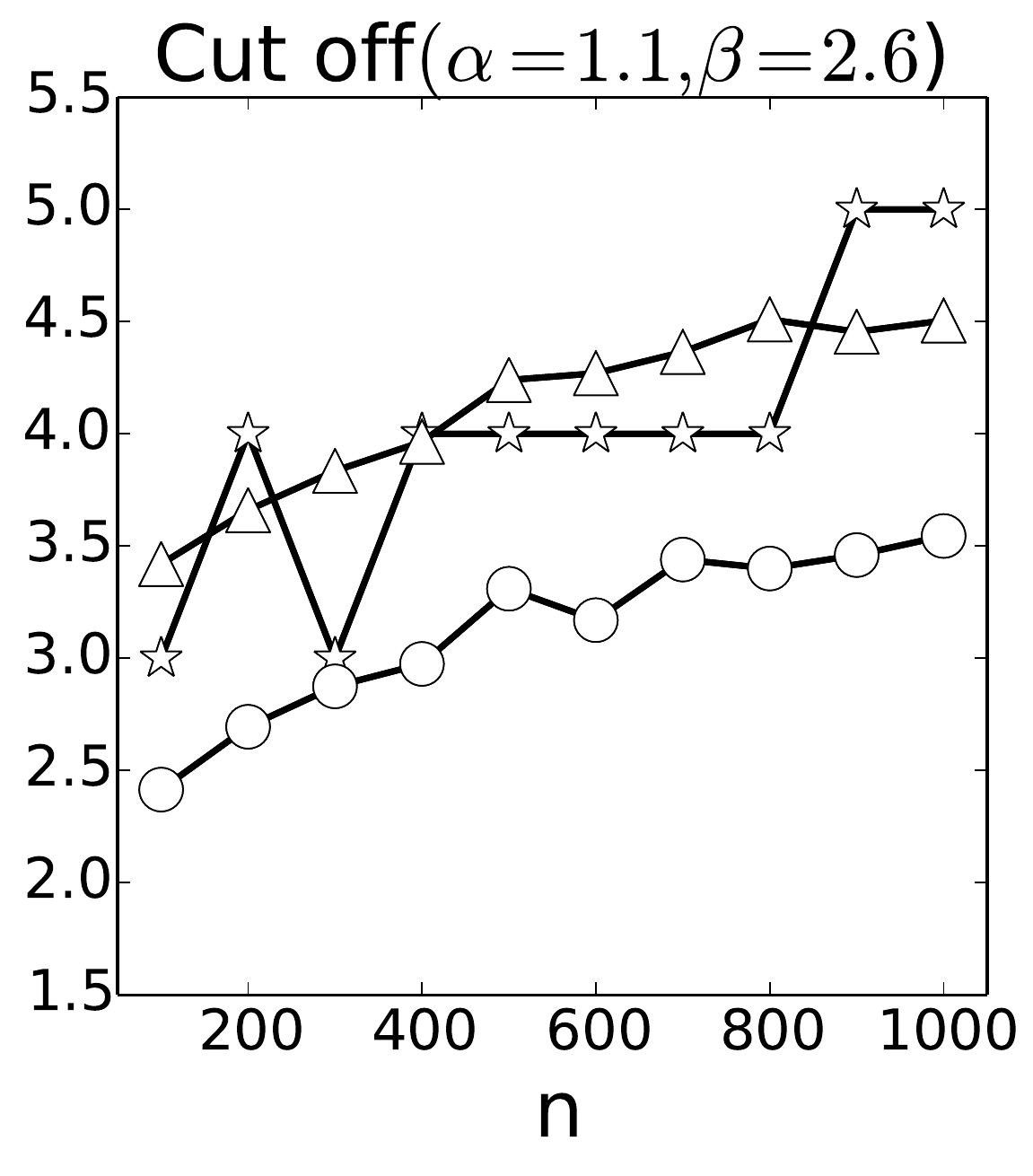}
\end{minipage}
\begin{minipage}{0.23\hsize}
\includegraphics[width=0.99\hsize]{./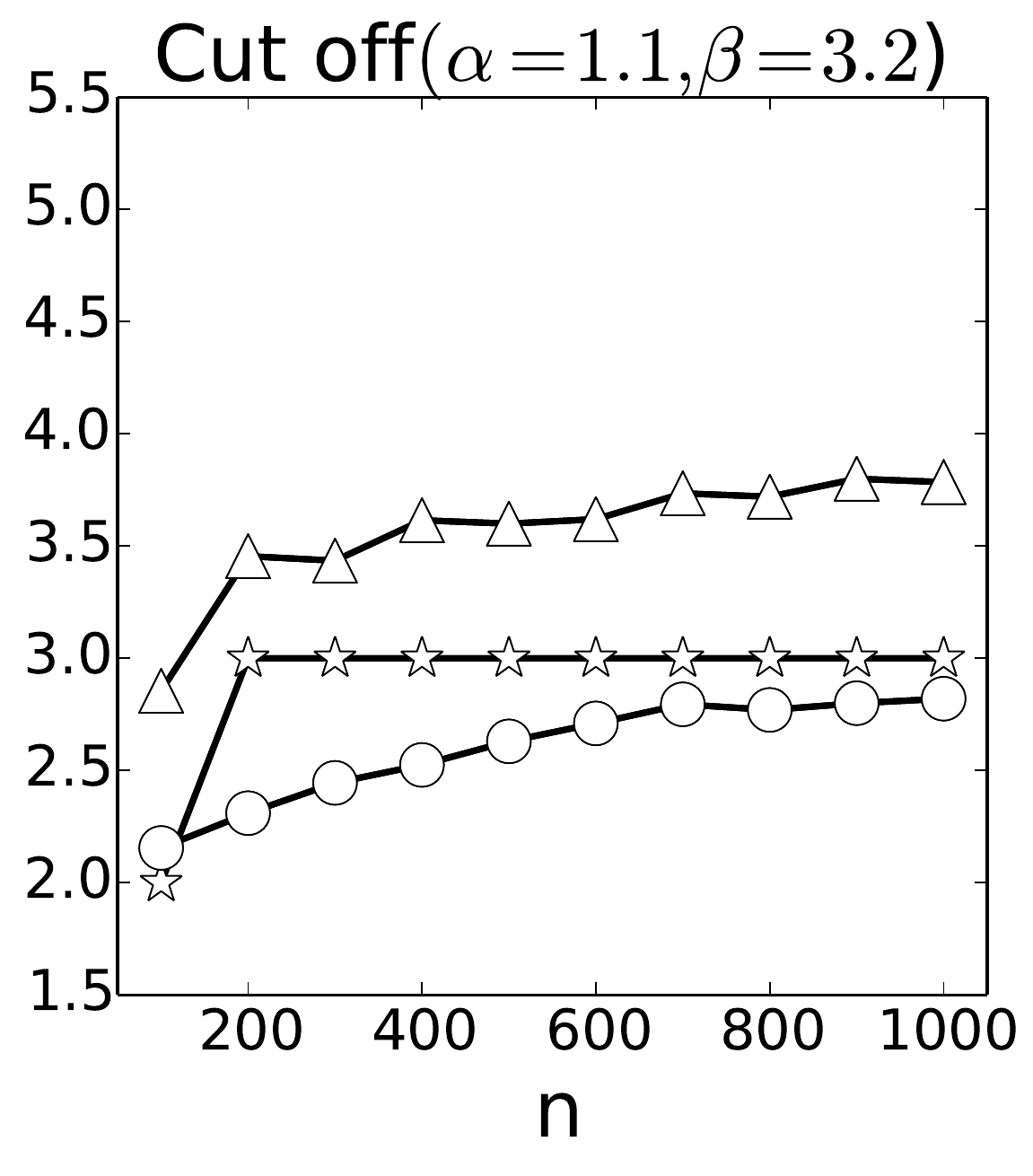}
\end{minipage}
\begin{minipage}{0.23\hsize}
\includegraphics[width=0.99\hsize]{./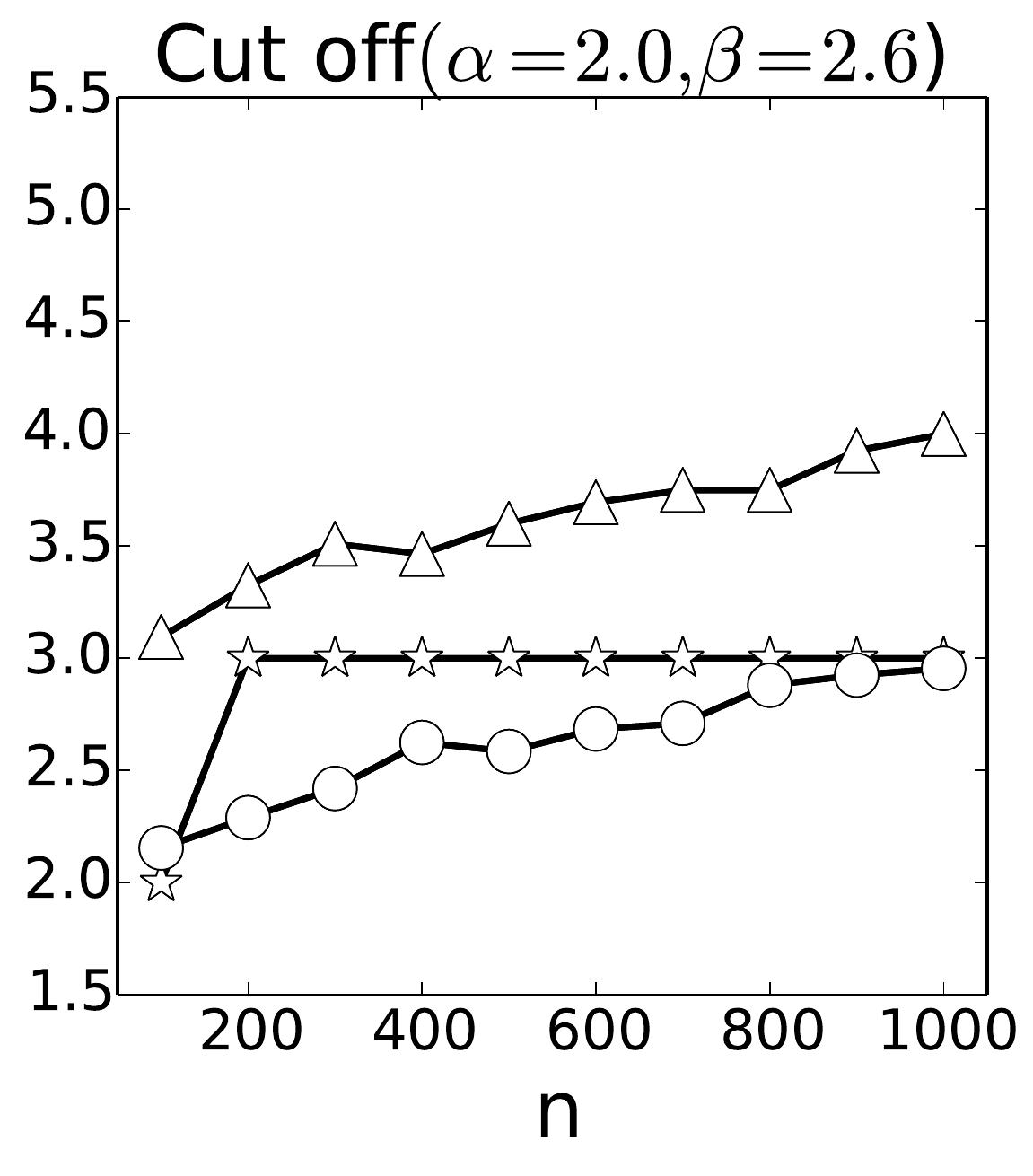}
\end{minipage}
\begin{minipage}{0.23\hsize}
\includegraphics[width=0.99\hsize]{./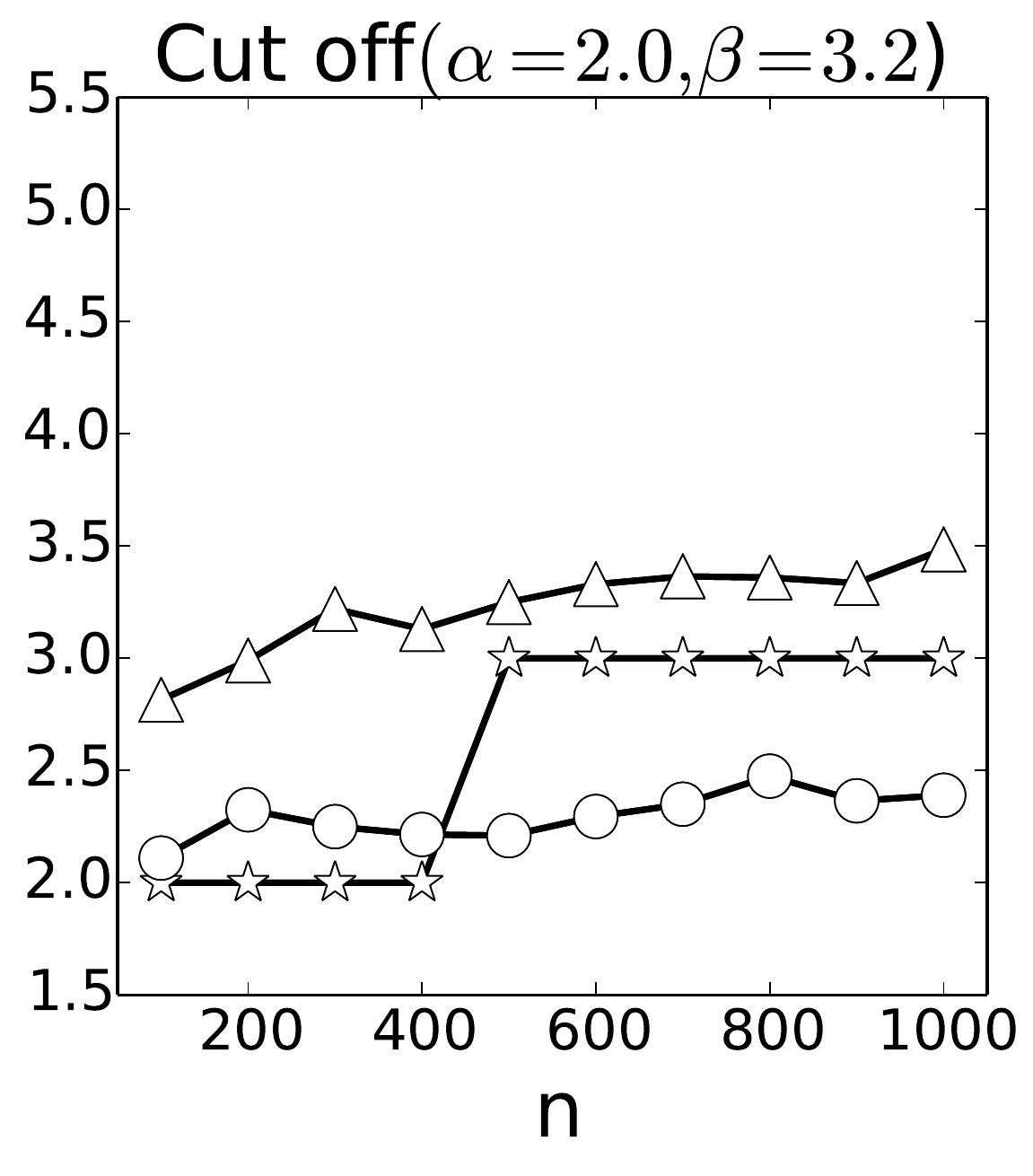}
\end{minipage}
 \end{center}
 \begin{center}
\begin{minipage}{0.23\hsize}
\includegraphics[width=0.99\hsize]{./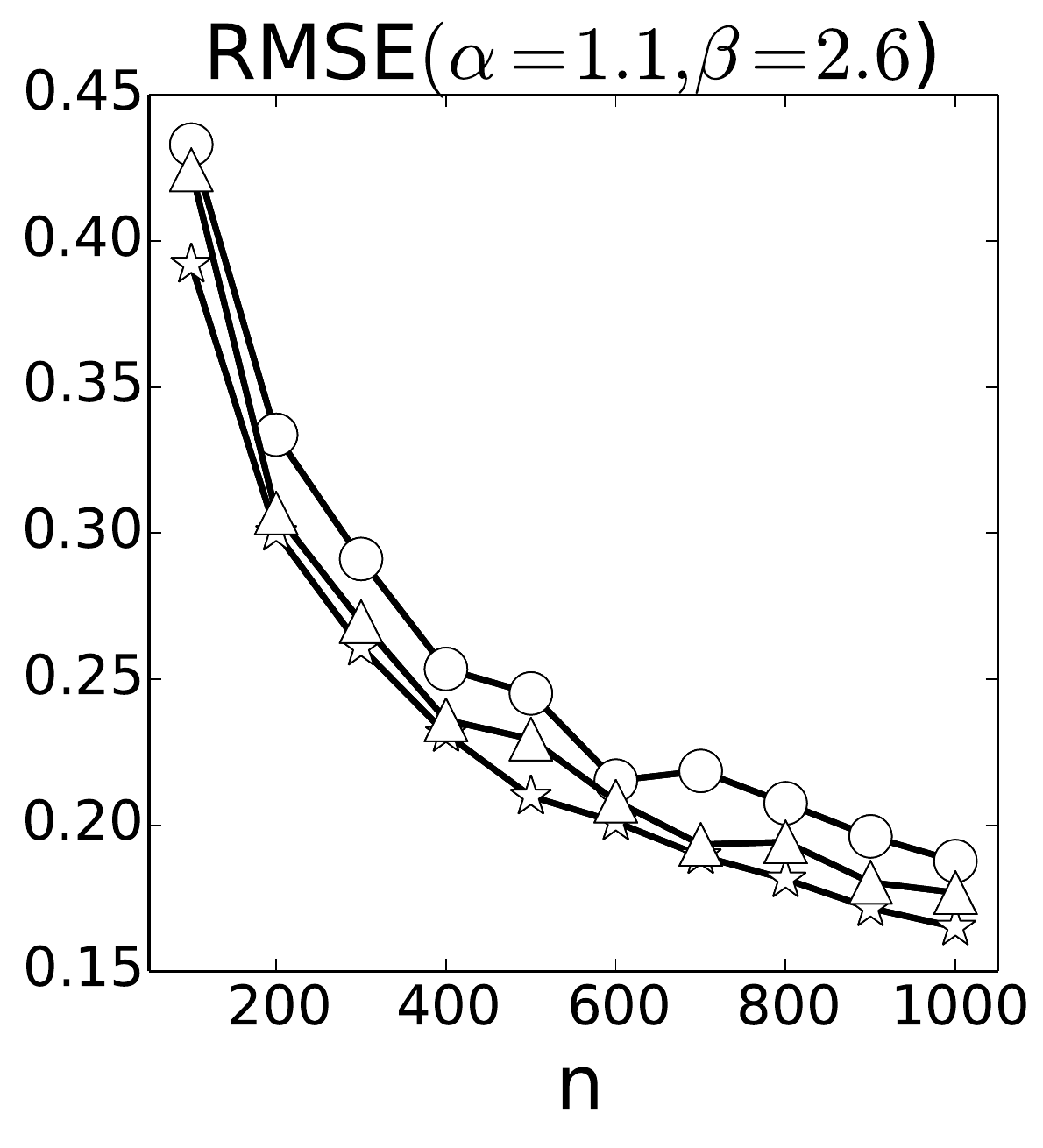}
\end{minipage}
\begin{minipage}{0.23\hsize}
\includegraphics[width=0.99\hsize]{./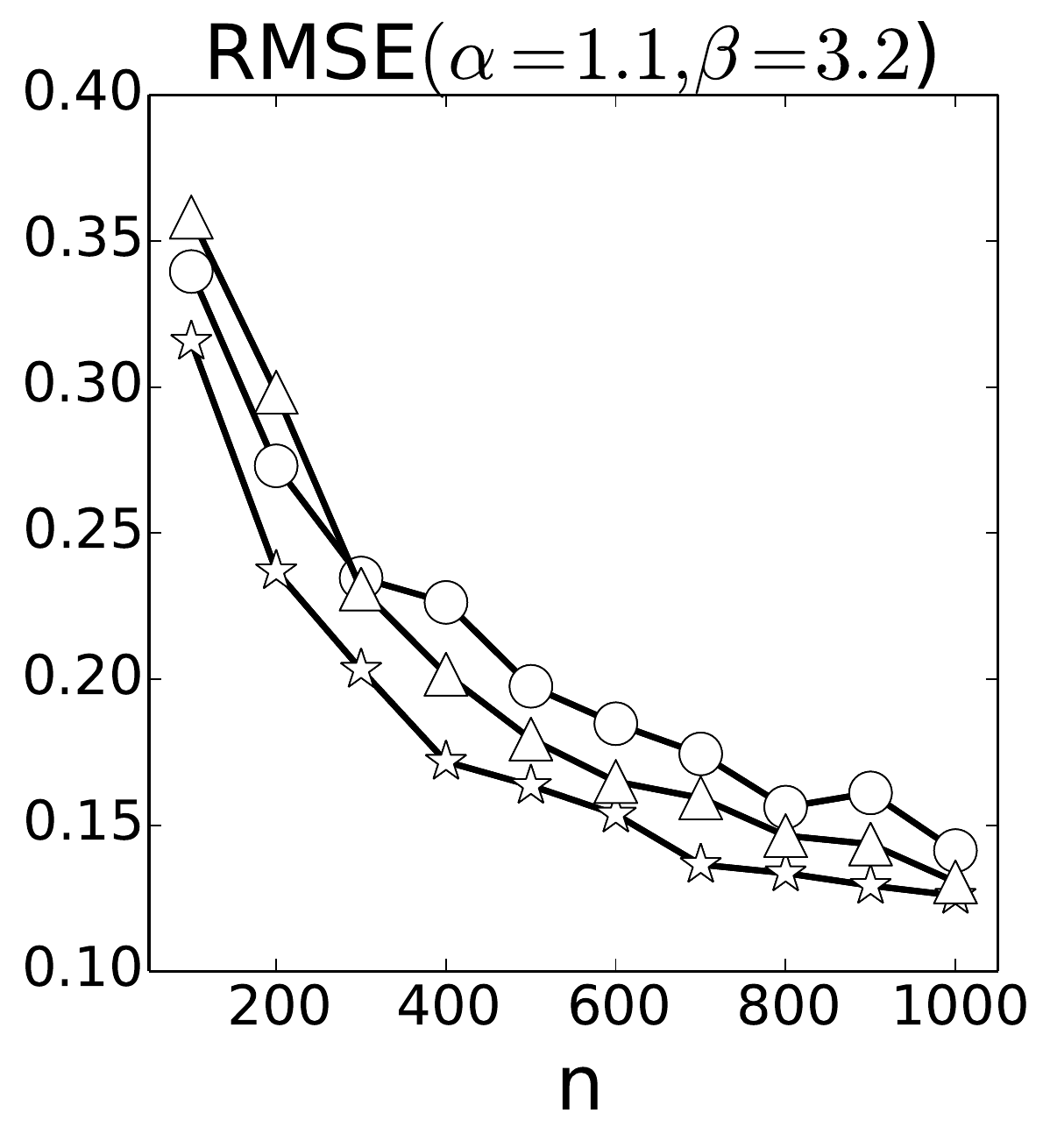}
\end{minipage}
\begin{minipage}{0.23\hsize}
\includegraphics[width=0.99\hsize]{./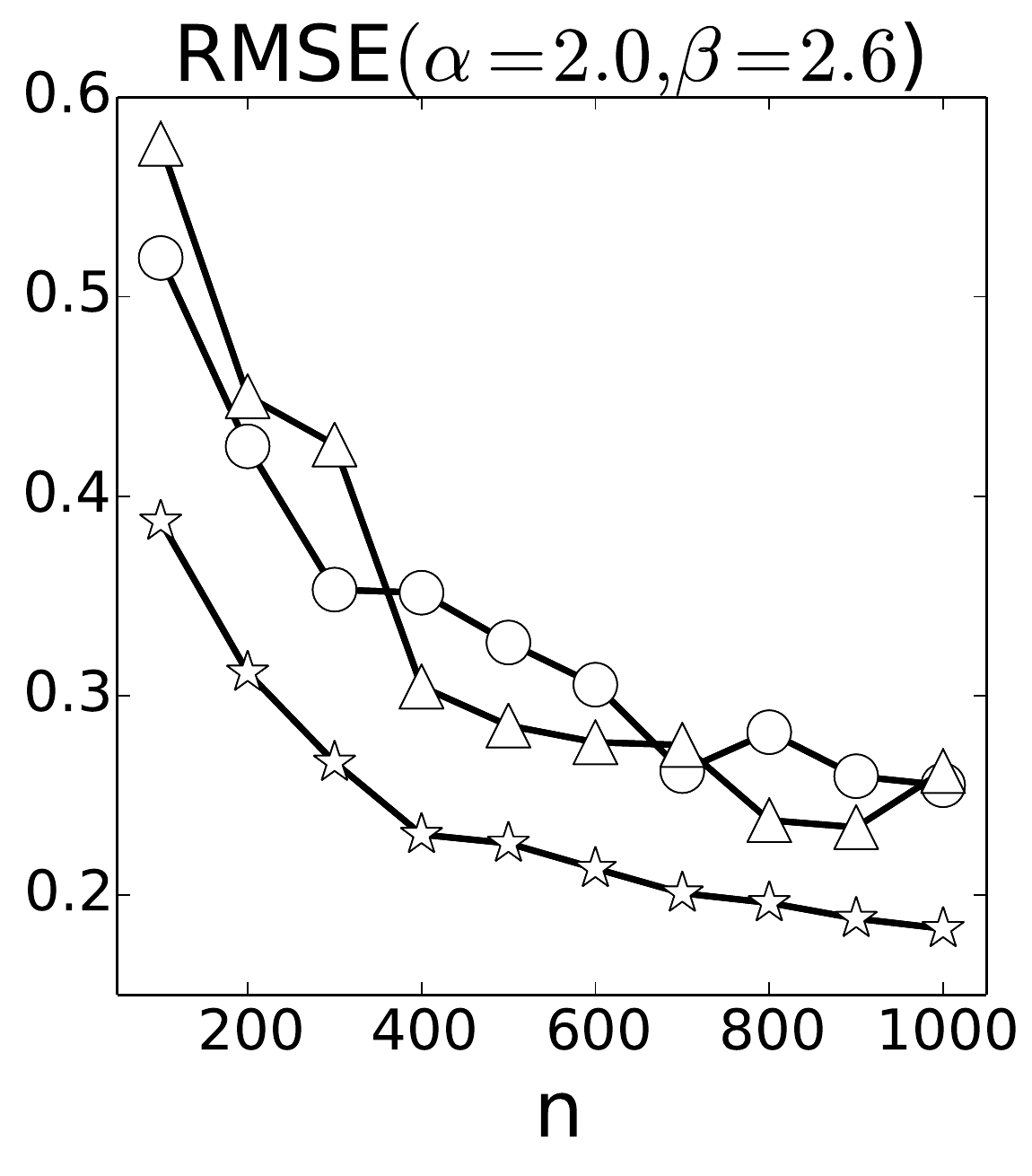}
\end{minipage}
\begin{minipage}{0.23\hsize}
\includegraphics[width=0.99\hsize]{./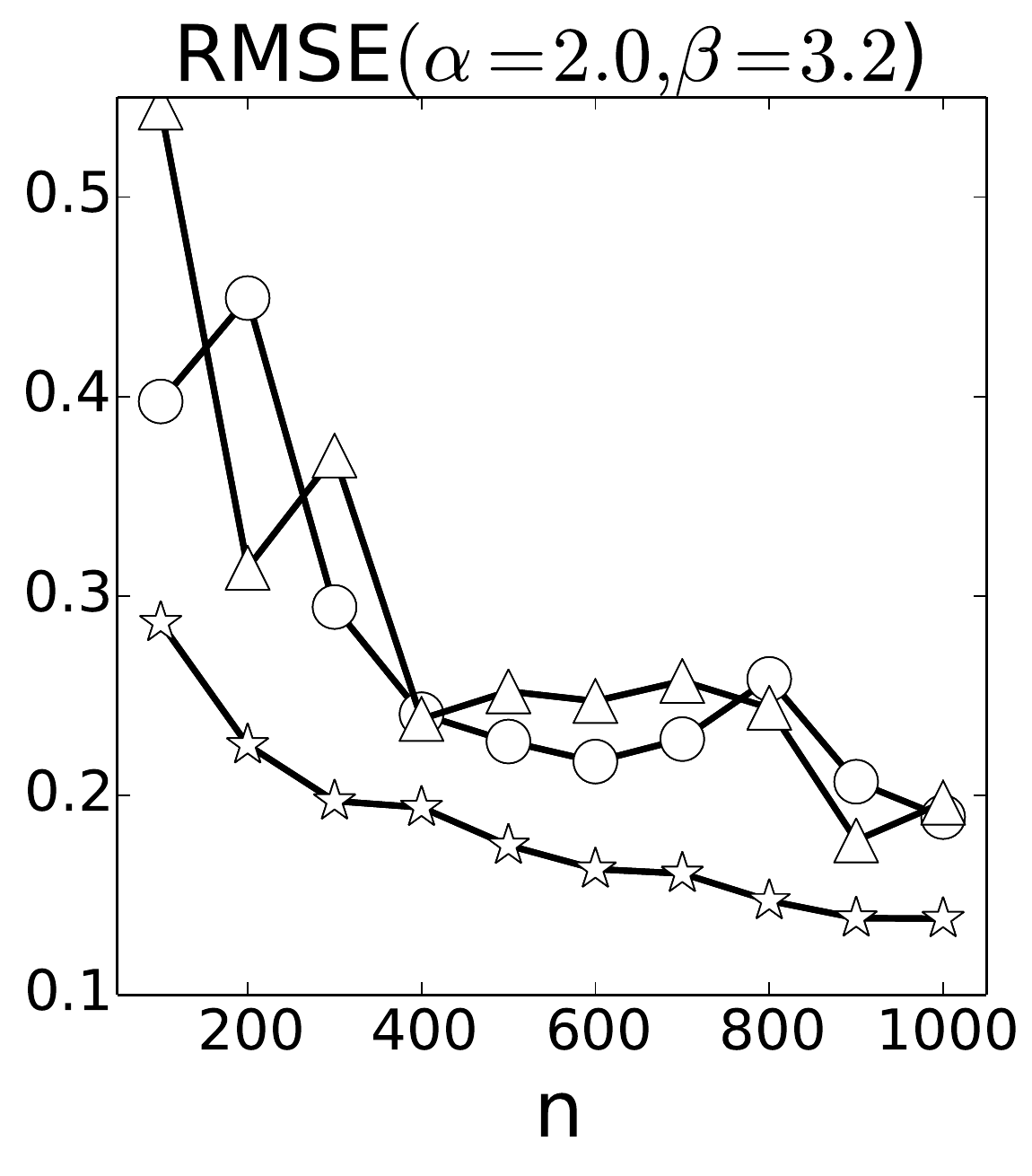}
\end{minipage}
 \end{center}
 \begin{center}
\begin{minipage}{0.23\hsize}
\includegraphics[width=0.99\hsize]{./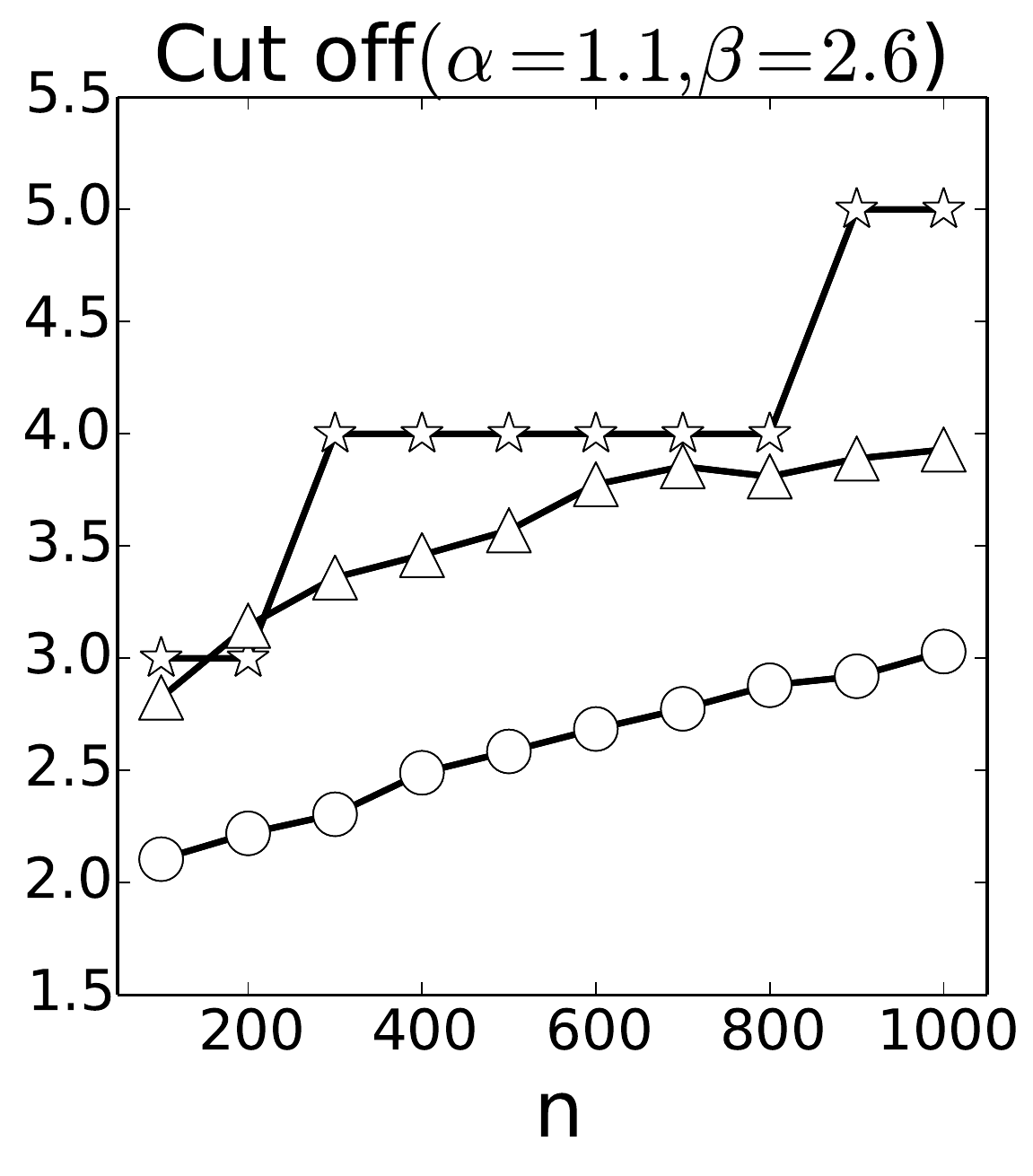}
\end{minipage}
\begin{minipage}{0.23\hsize}
\includegraphics[width=0.99\hsize]{./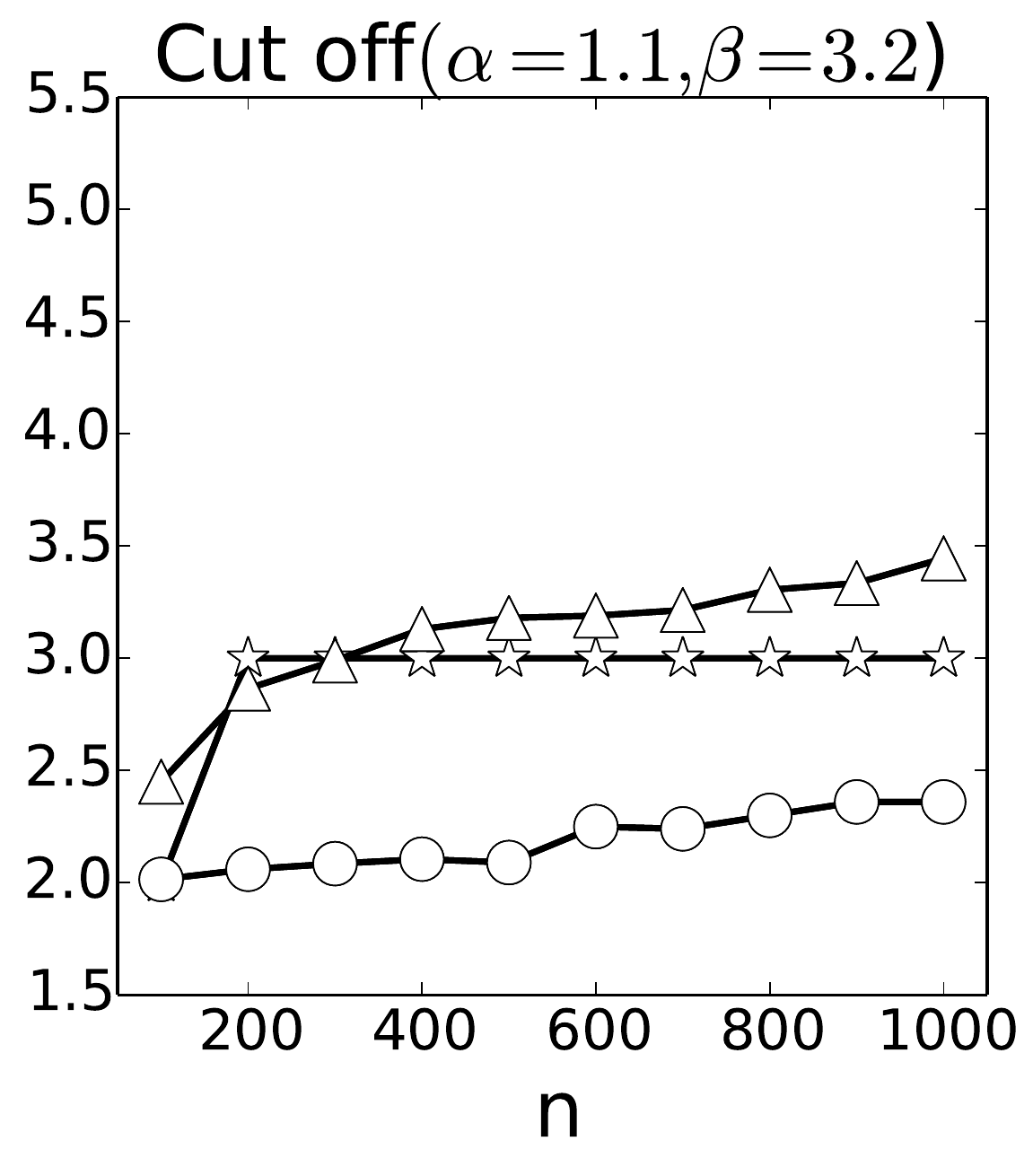}
\end{minipage}
\begin{minipage}{0.23\hsize}
\includegraphics[width=0.99\hsize]{./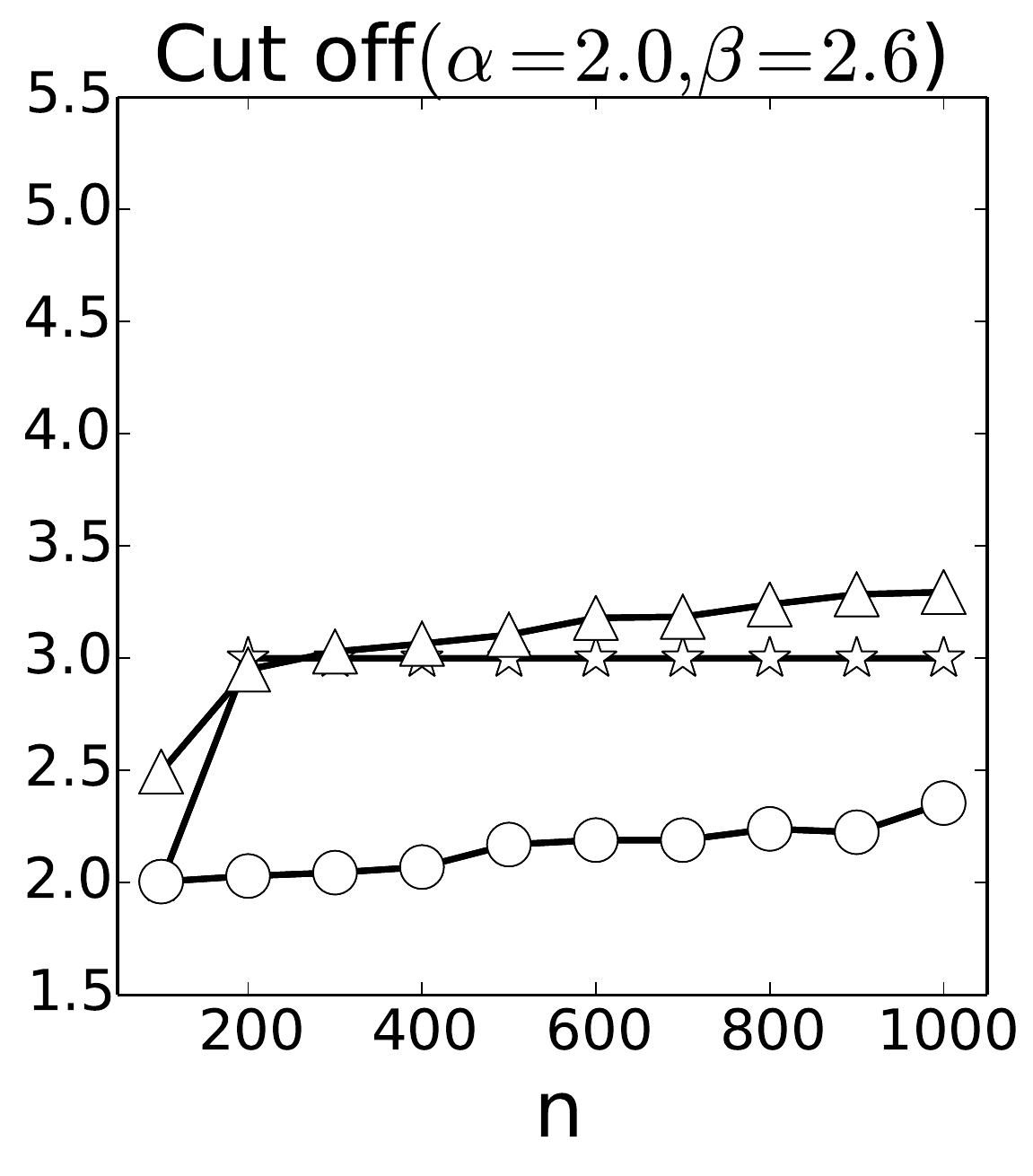}
\end{minipage}
\begin{minipage}{0.23\hsize}
\includegraphics[width=0.99\hsize]{./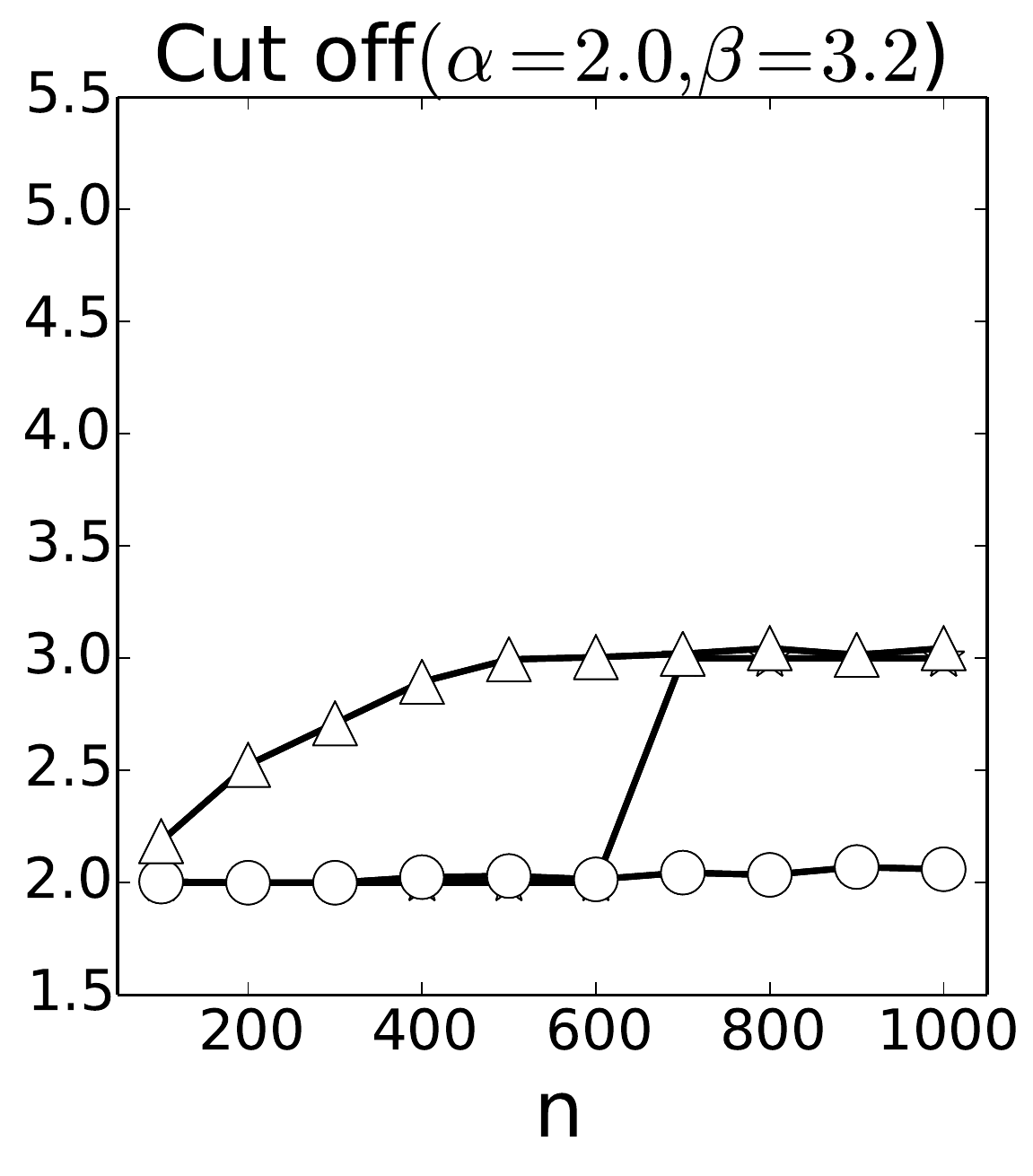}
\end{minipage}
 \end{center}
 \begin{center}
\begin{minipage}{0.23\hsize}
\includegraphics[width=0.99\hsize]{./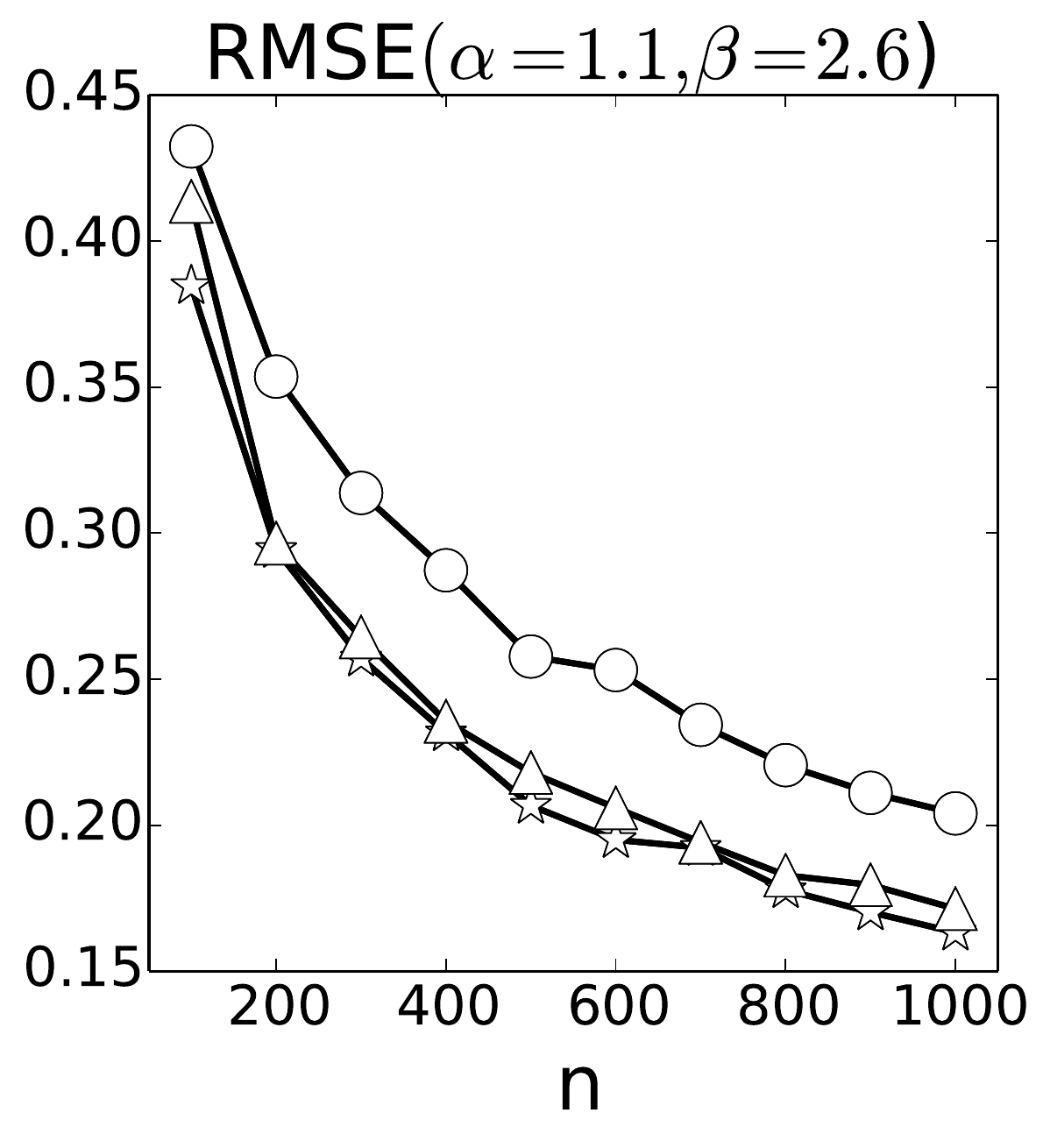}
\end{minipage}
\begin{minipage}{0.23\hsize}
\includegraphics[width=0.99\hsize]{./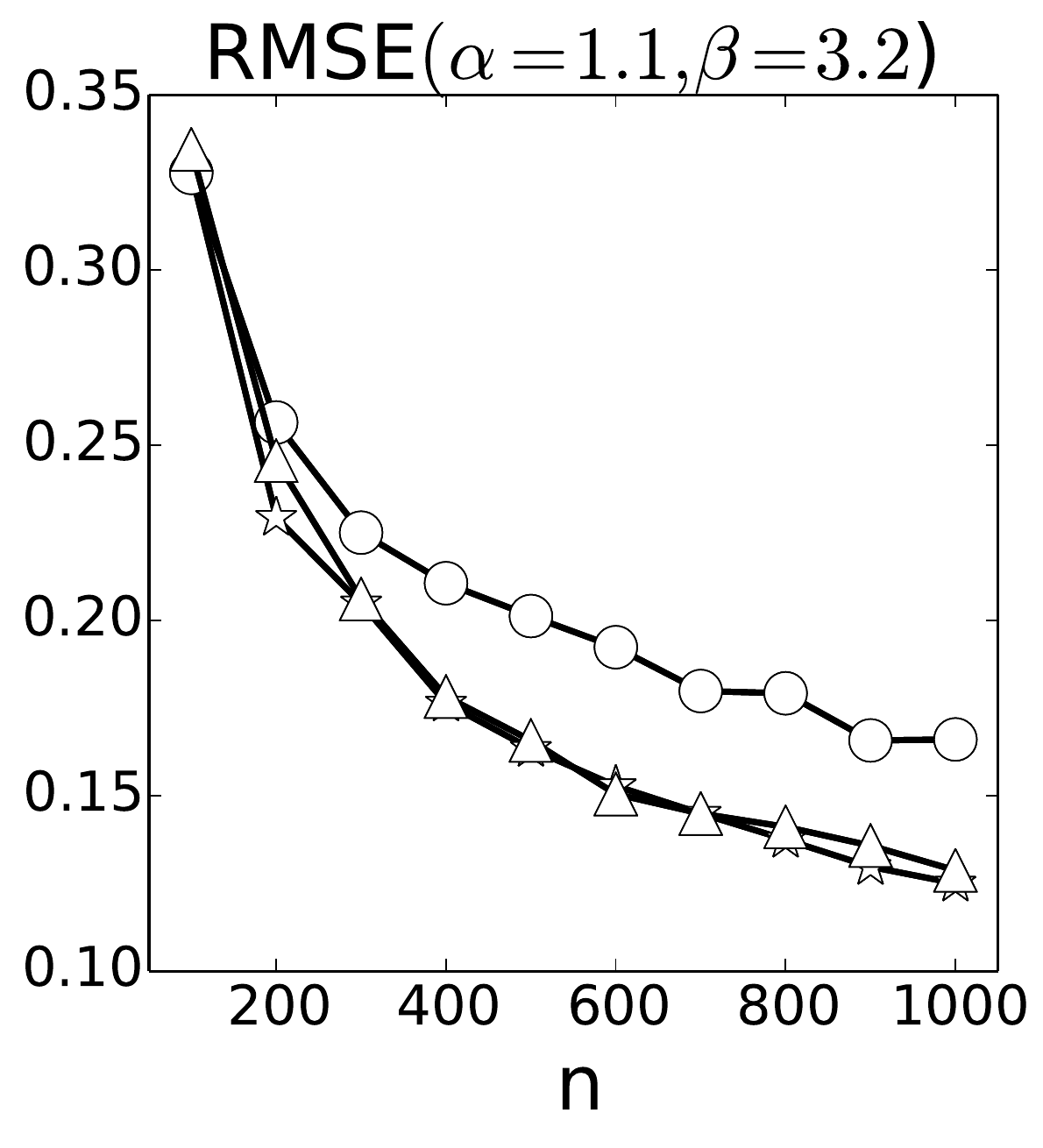}
\end{minipage}
\begin{minipage}{0.23\hsize}
\includegraphics[width=0.99\hsize]{./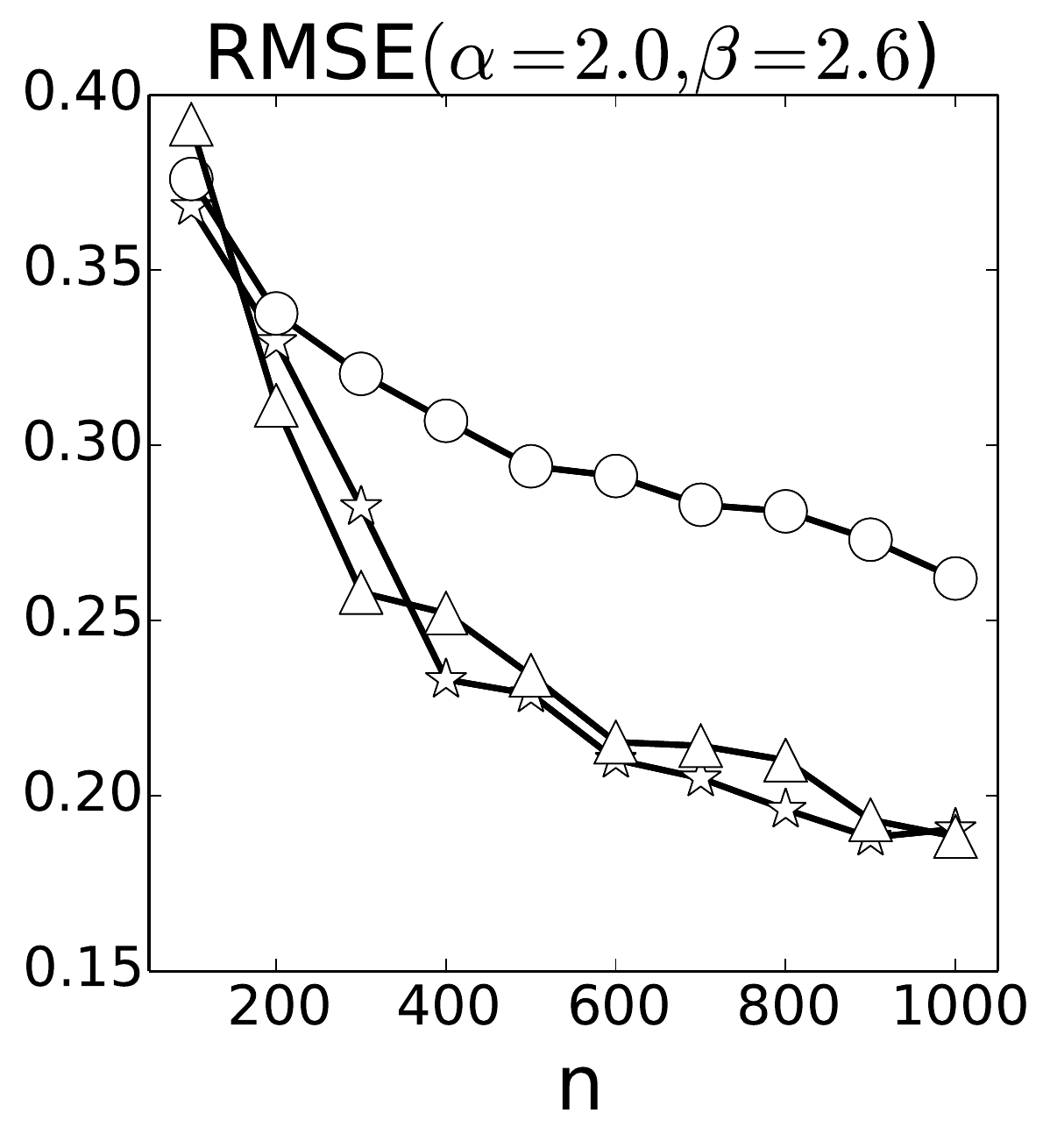}
\end{minipage}
\begin{minipage}{0.23\hsize}
\includegraphics[width=0.99\hsize]{./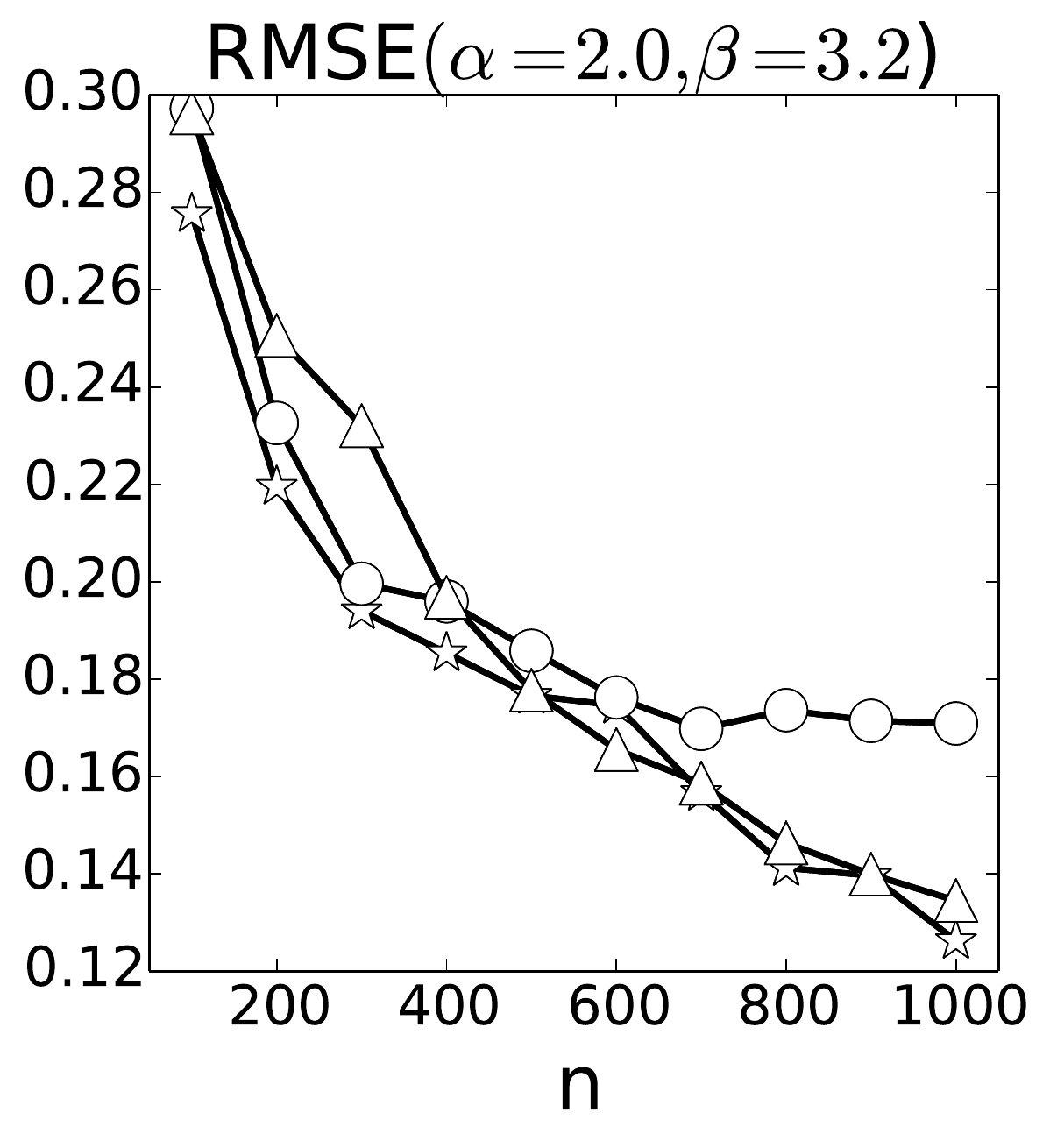}
\end{minipage}
 \end{center}
 \caption{Values of the cut-off levels together with those of the RMSE for each of the cut-off levels with Gaussian noise (upper 2 rows) and $\chi^2$ noise (lower 2 rows). Stars correspond to cases with oracle cut-off level $m_n^*$, triangles and circles correspond to those with $\hat{m}_n + 1$ and $\max\{\hat{m}_n,2\}$, respectively. Monte Carlo averages of cut-off levels $\hat{m}_{n} + 1$ and $\max \{ \hat{m}_{n},2 \}$ are reported. \label{fig:loss_m}}
\end{figure}

Before looking at the performance of the confidence bands, we shall look at how our selection rules of the cut-off level work in practice. 
For comparison, we also report the oracle cut-off level $m_{n}^{*}$ that minimizes the $L^{2}$-risk of the PCA-based estimator. That is, denoting by $\hat{b}(\cdot;m)$ the PCA-based estimator with given cut-off level, $m_{n}^{*}$ is defined by 
\[
m_{n}^{*} = \arg \min \{ \Ep[ \| \hat{b}(\cdot;m) - b \|^{2}] : m=1,2,\dots,10 \}.
\]
Figure \ref{fig:loss_m} presents values of the cut-off levels together with those of the RMSE for each of the cut-off levels. The RMSE is the square root of the $L^{2}$-risk, and Monte Carlo averages of cut-off levels $\hat{m}_{n} + 1$ and $\max \{ \hat{m}_{n},2 \}$ are reported. 
For each of the parameter configurations, as expected, 
all of $\hat{m}_n + 1, \max \{\hat{m}_n,2\}$, and $m_n^*$ increase as $n$ increases (with one exception for $m_{n}^{*}$ in the case where $(n,\alpha,\beta)=(300,1.1,2.6)$), and the values of the RMSE for each cut-off level decrease as $n$ increases. Furthermore, $\hat{m}_{n}+1$ tends to be larger (on average) than the oracle one $m_{n}^{*}$, but $\max \{ \hat{m}_{n}, 2 \}$ tends to be smaller (on average) than $m_{n}^{*}$, although their differences are not large. 
In terms of the RMSE, both of our rules $\hat{m}_{n}+1$ and $\max \{ \hat{m}_{n},2 \}$ work reasonably well, in comparison with the optimal RMSE (i.e., the RMSE with $m_{n}^{*}$). Interestingly, $\hat{m}_{n}+1$ tends to yield better RMSEs than $\max \{ \hat{m}_{n},2 \}$.

\if0
\begin{figure}[htbp]
 \begin{center}
\begin{minipage}{0.23\hsize}
 \includegraphics[width=0.99\hsize]{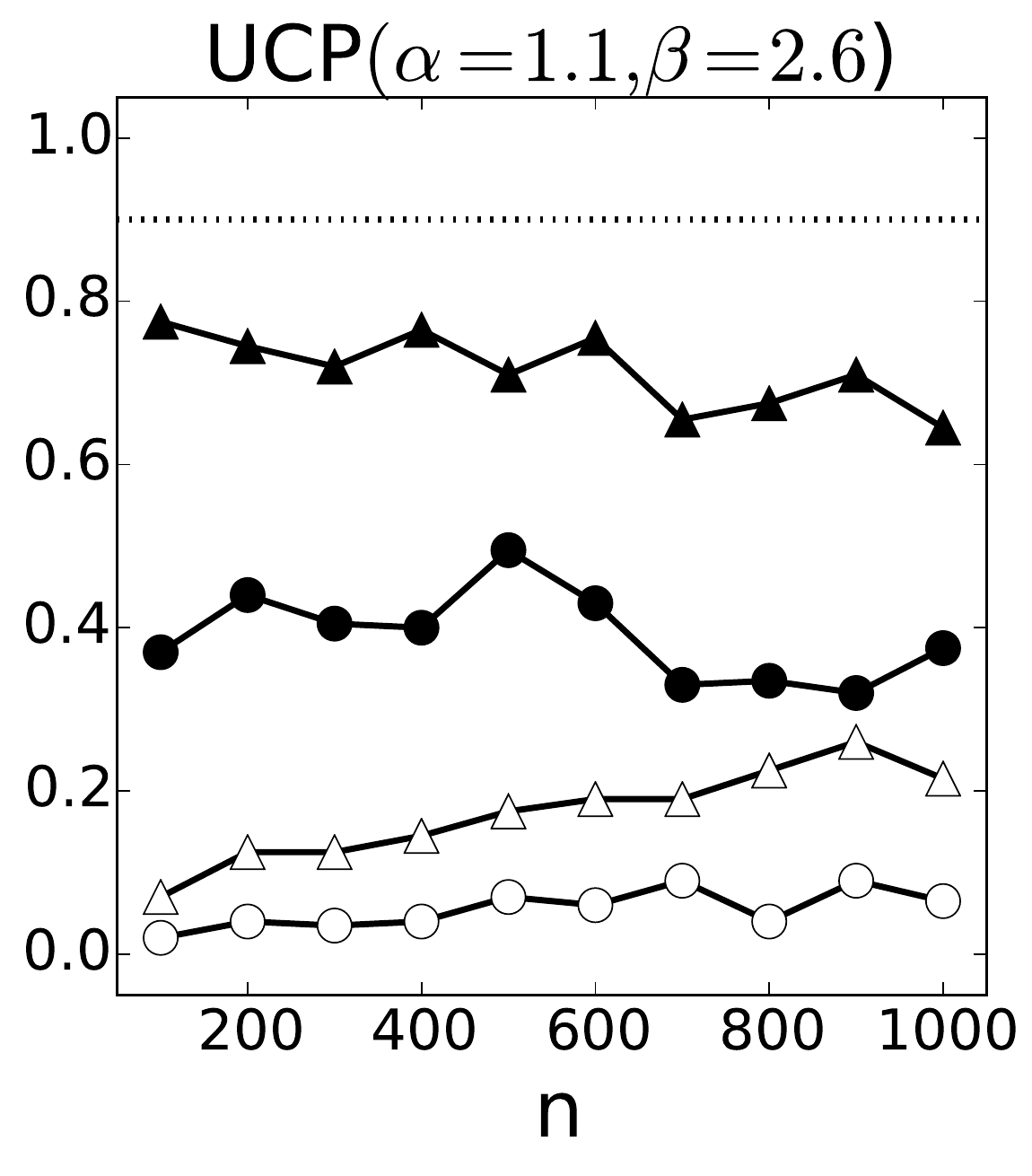}
\end{minipage}
\begin{minipage}{0.23\hsize}
 \includegra
phics[width=0.99\hsize]{./MCP_a1.1_b2.6_t0.1_ngauss.pdf}
\end{minipage}
\begin{minipage}{0.23\hsize}
 \includegraphics[width=0.99\hsize]{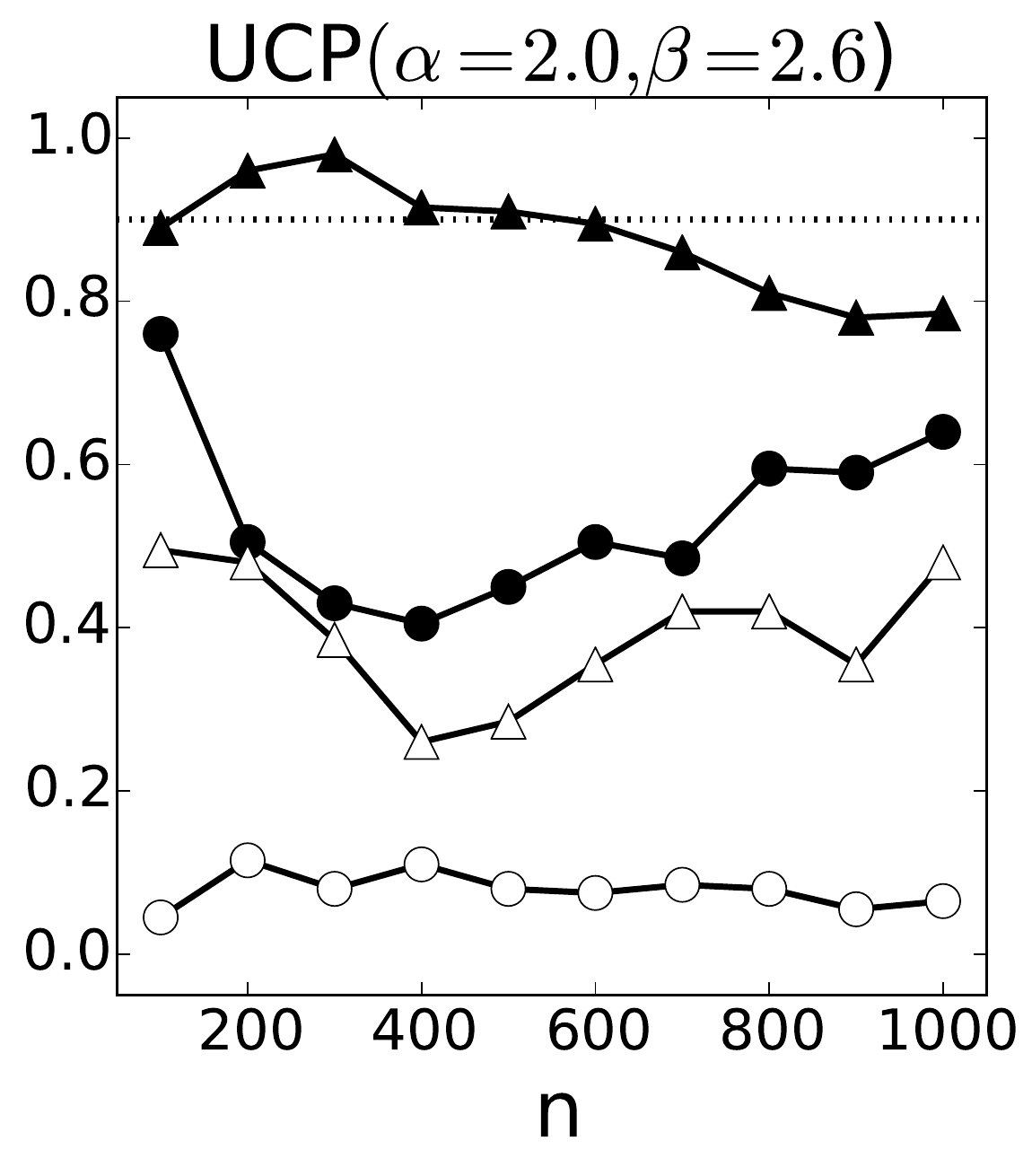}
\end{minipage}
\begin{minipage}{0.23\hsize}
 \includegraphics[width=0.99\hsize]{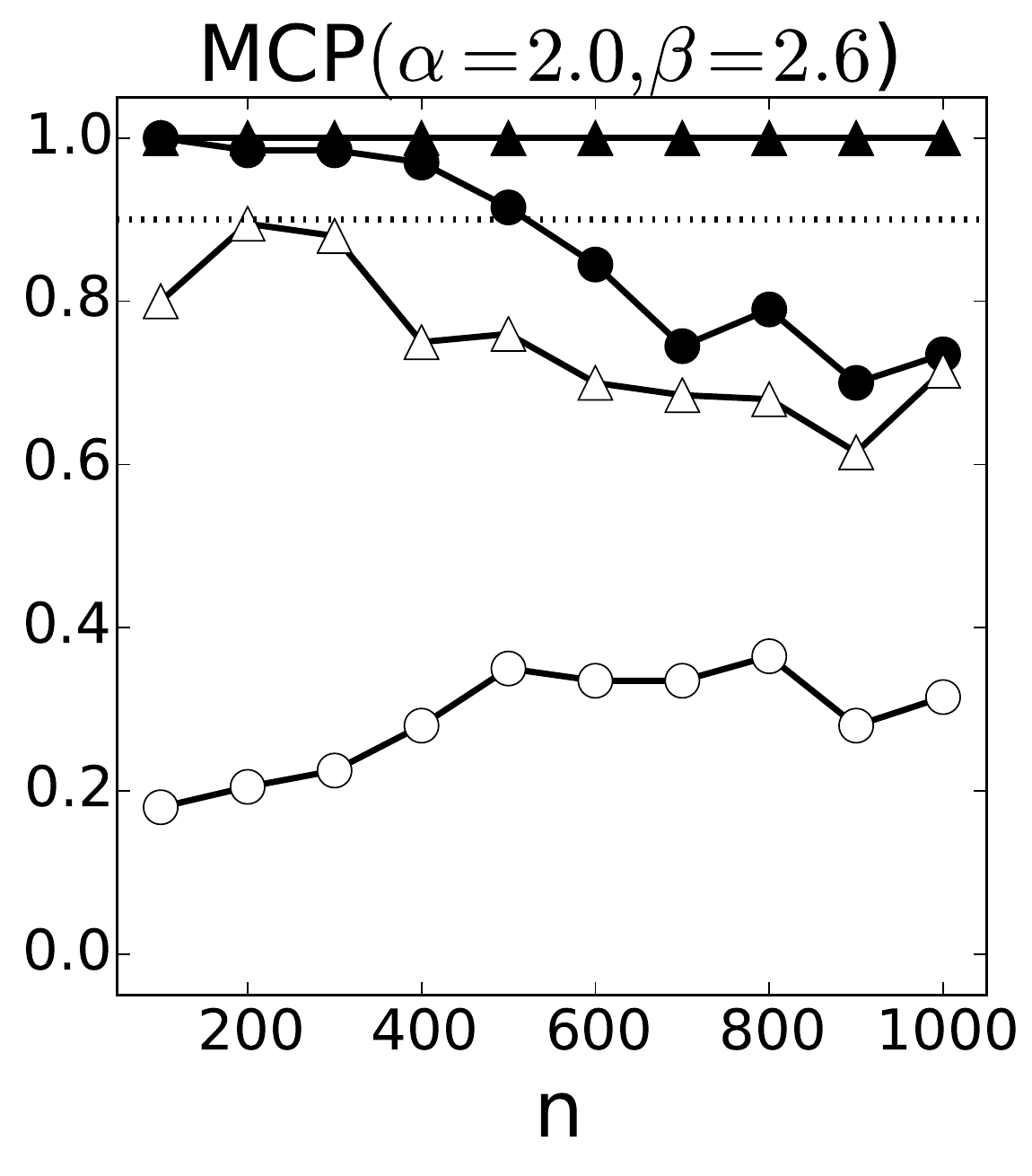}
 \end{minipage}
 \end{center}
  \begin{center}
\begin{minipage}{0.23\hsize}
 \includegraphics[width=0.99\hsize]{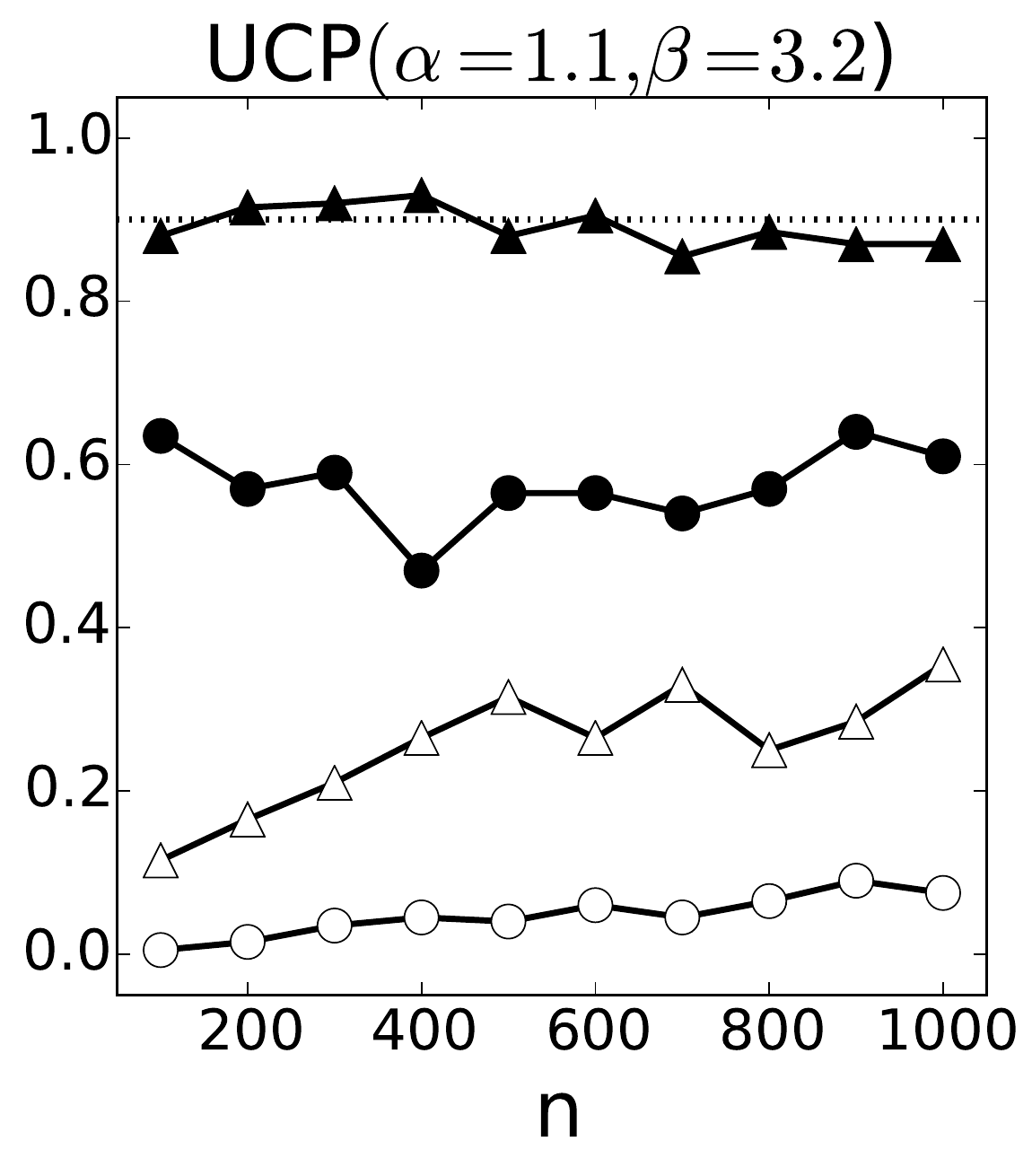}
\end{minipage}
\begin{minipage}{0.23\hsize}
 \includegraphics[width=0.99\hsize]{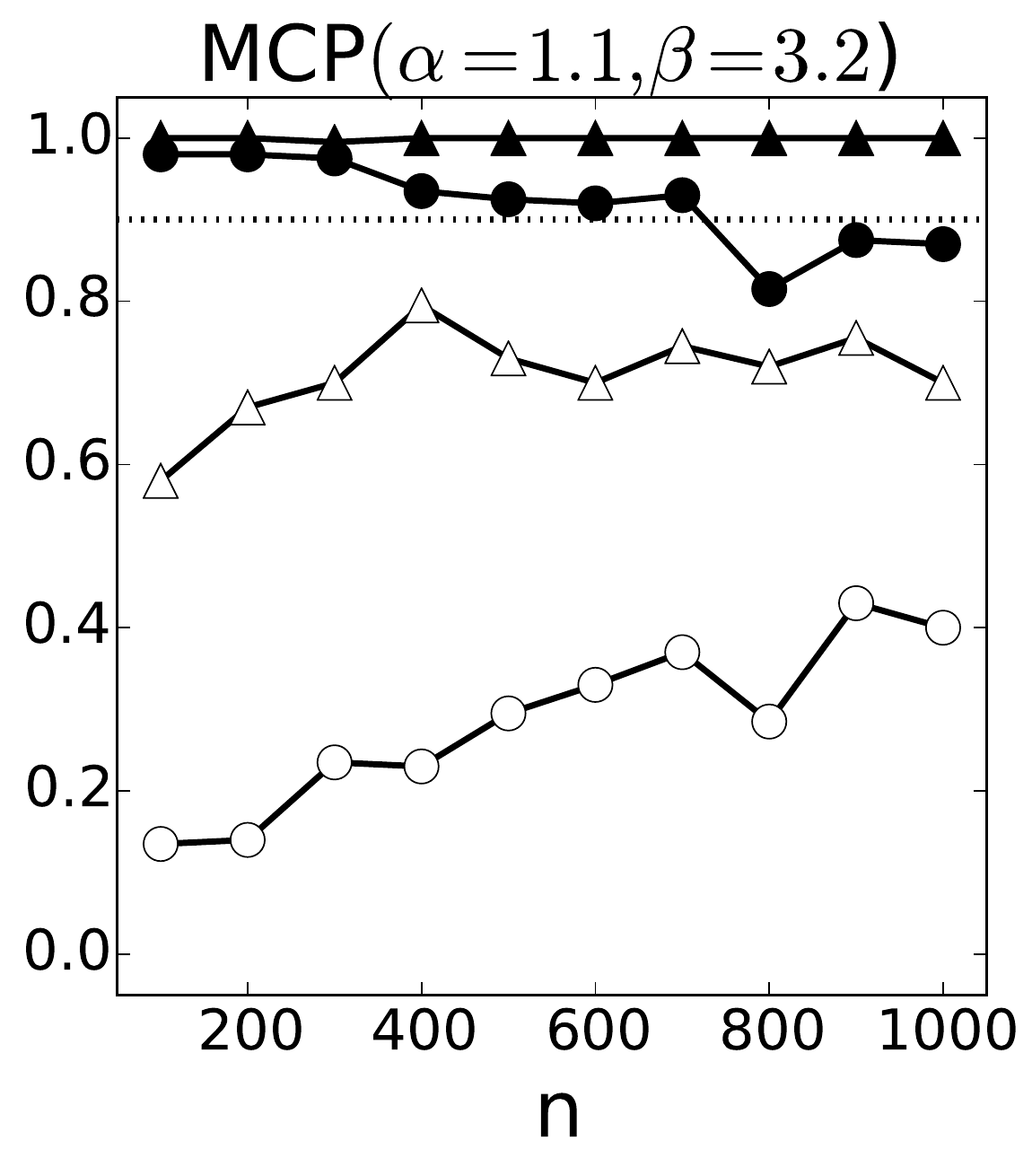}
\end{minipage}
\begin{minipage}{0.23\hsize}
 \includegraphics[width=0.99\hsize]{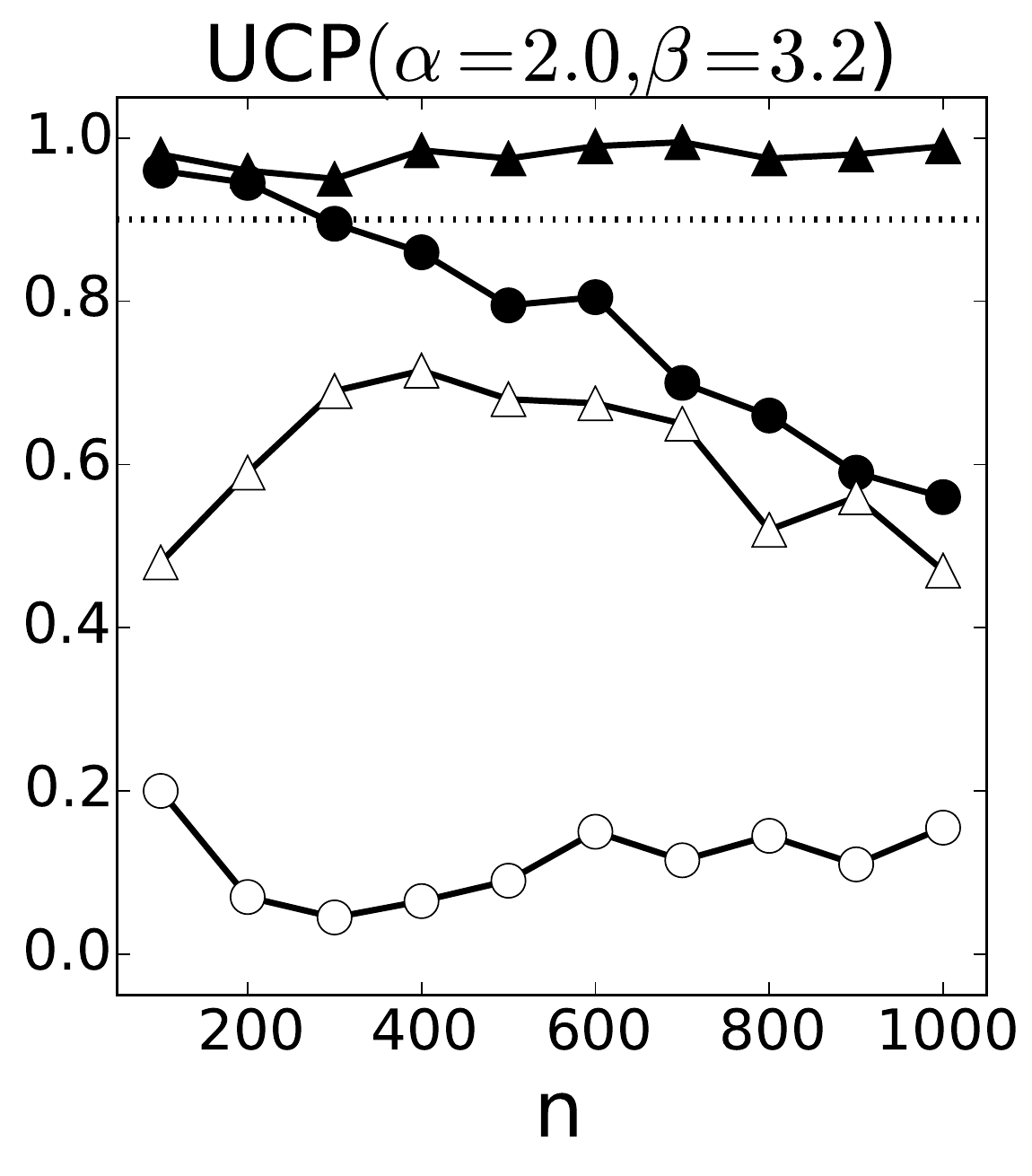}
\end{minipage}
\begin{minipage}{0.23\hsize}
 \includegraphics[width=0.99\hsize]{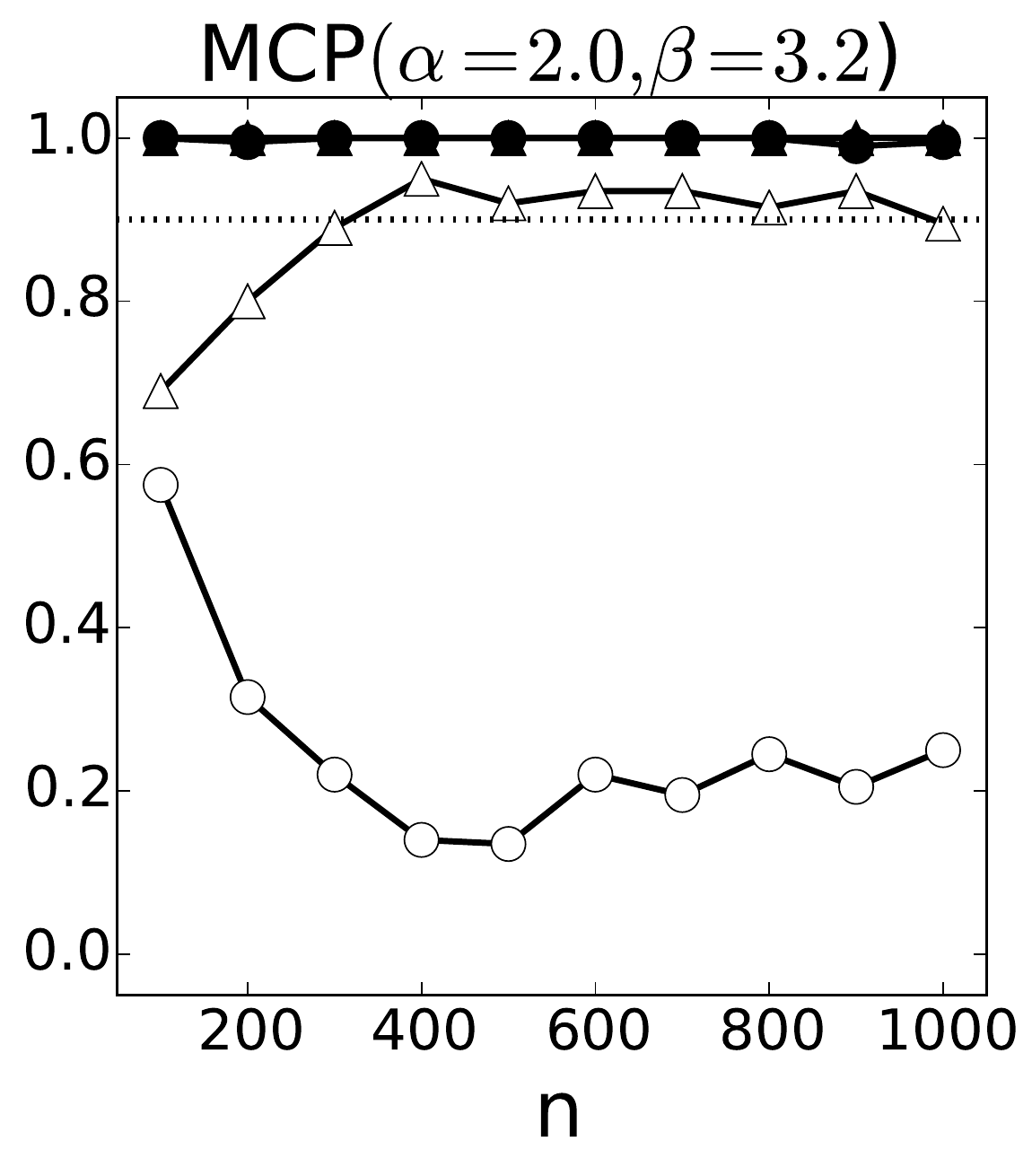}
\end{minipage}
 \end{center}
  \begin{center}
\begin{minipage}{0.23\hsize}
 \includegraphics[width=0.99\hsize]{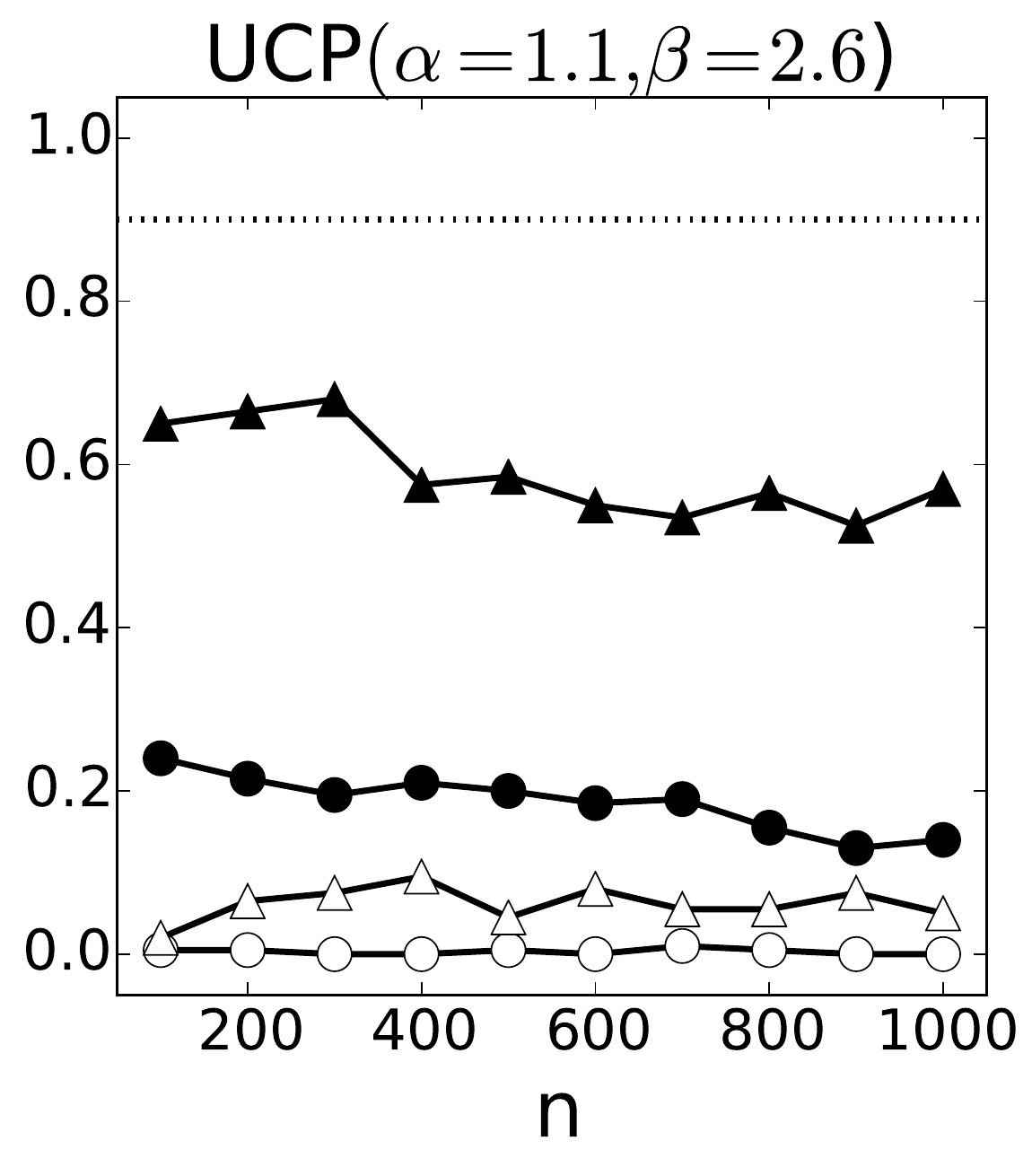}
\end{minipage}
\begin{minipage}{0.23\hsize}
 \includegraphics[width=0.99\hsize]{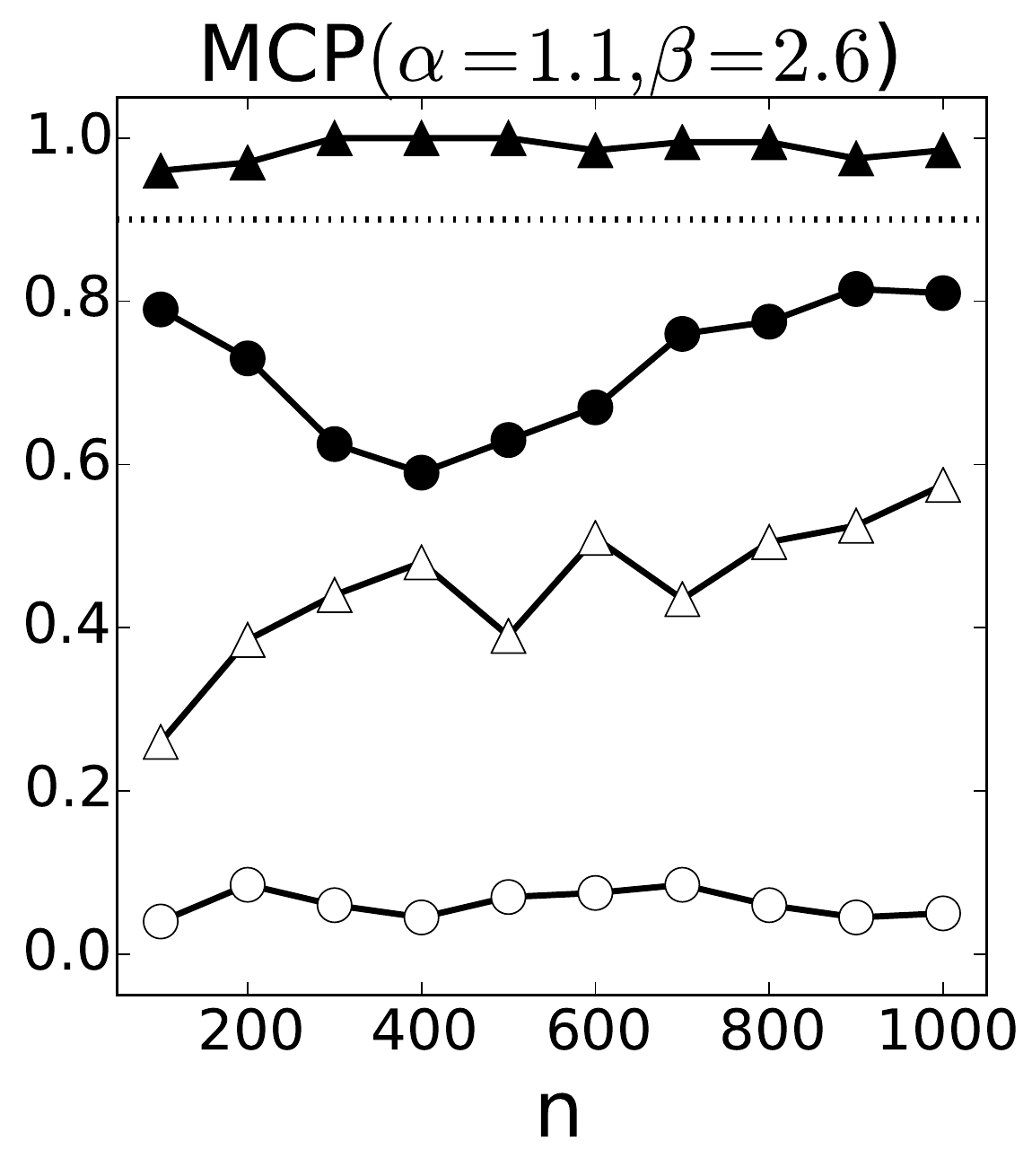}
\end{minipage}
\begin{minipage}{0.23\hsize}
 \includegraphics[width=0.99\hsize]{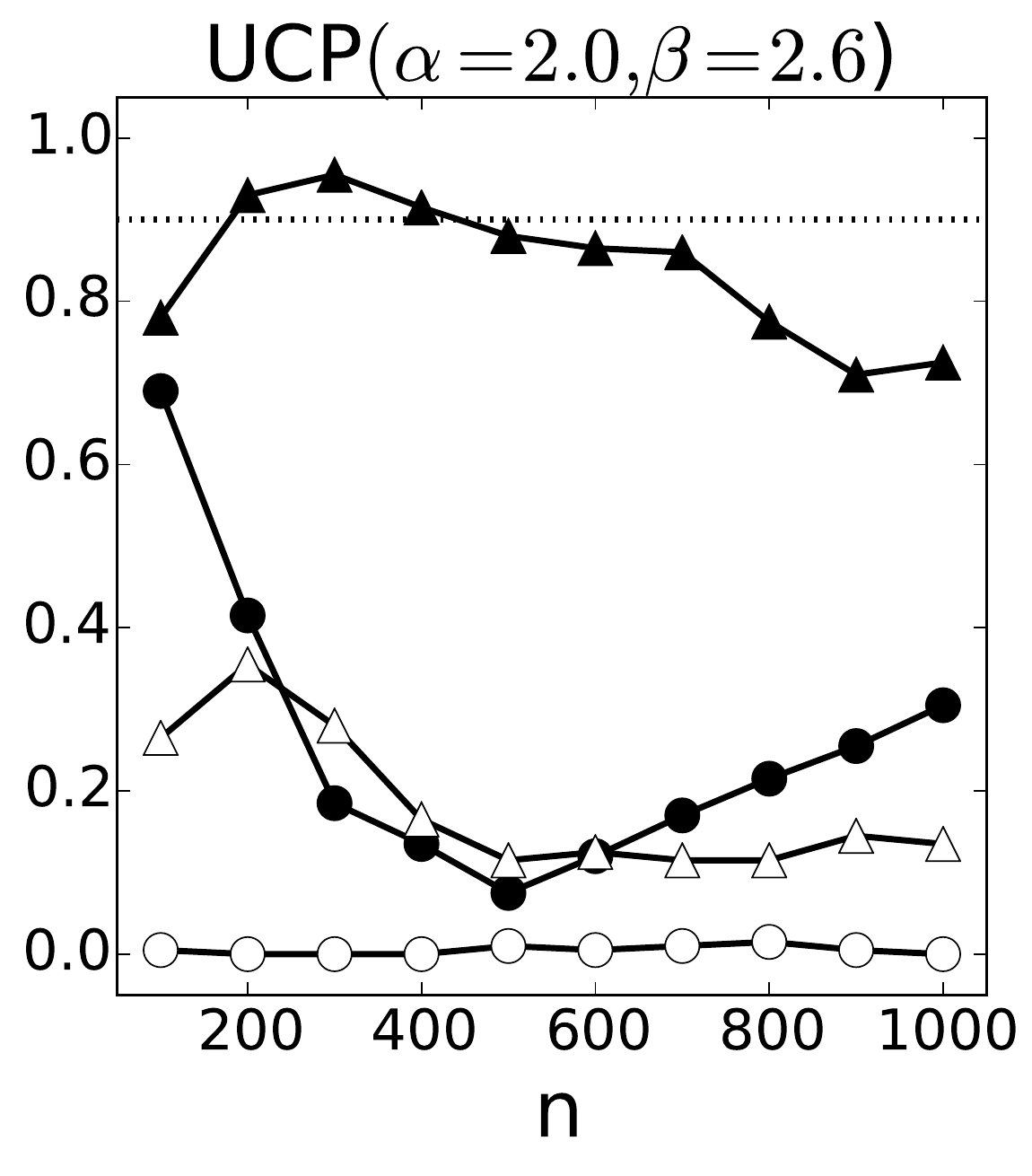}
\end{minipage}
\begin{minipage}{0.23\hsize}
 \includegraphics[width=0.99\hsize]{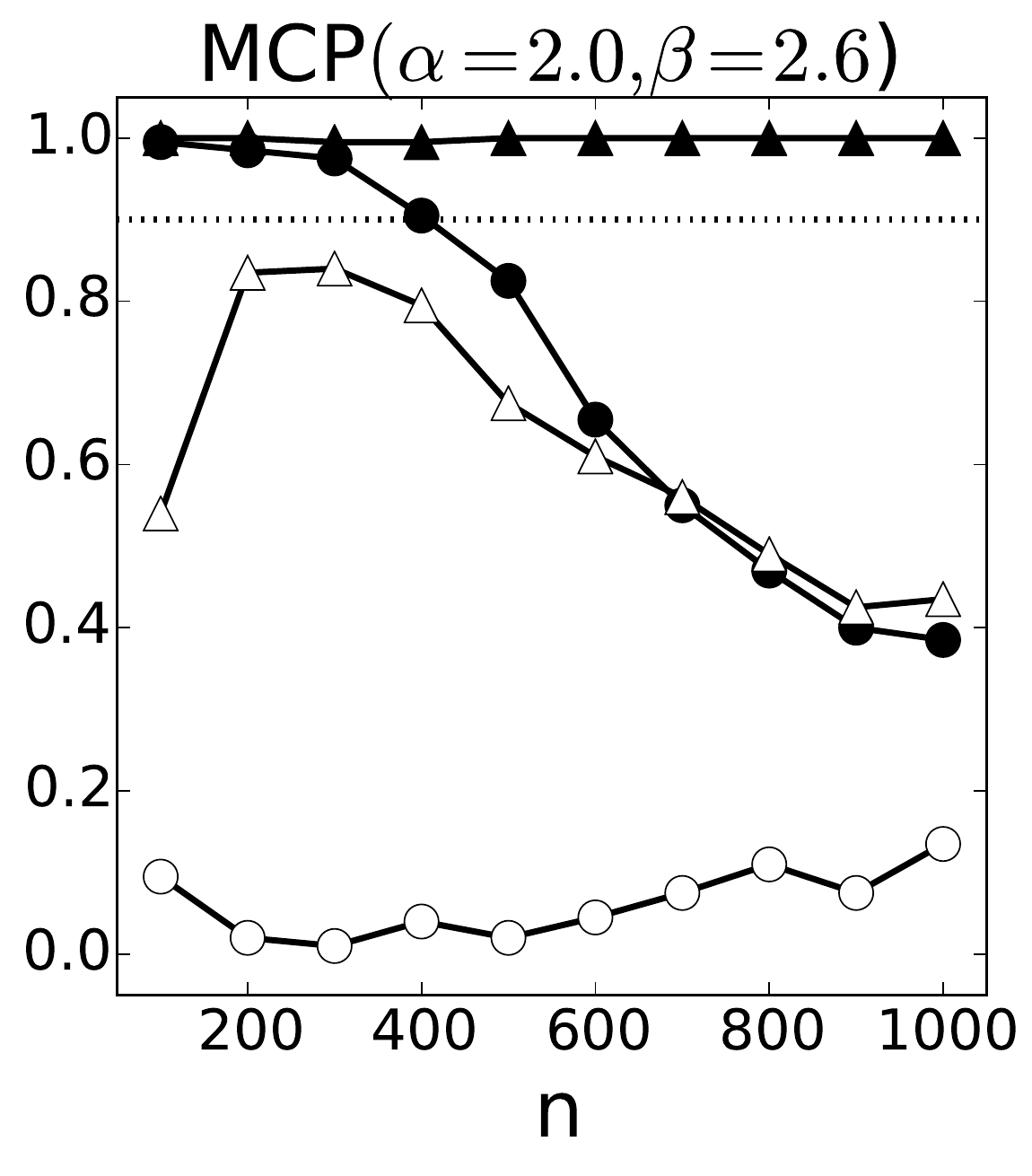}
\end{minipage}
 \end{center}
 \begin{center}
\begin{minipage}{0.23\hsize}
 \includegraphics[width=0.99\hsize]{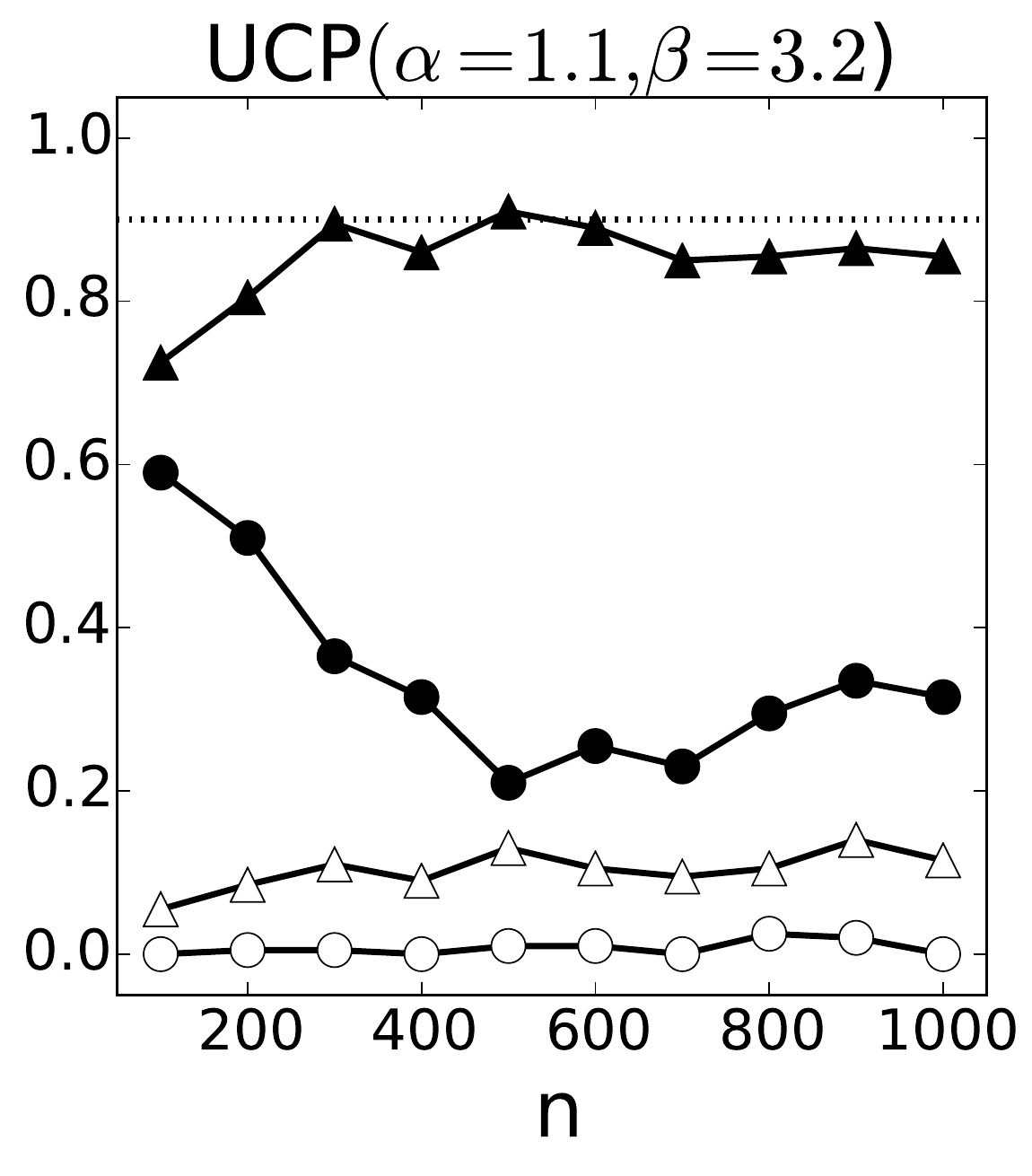}
\end{minipage}
\begin{minipage}{0.23\hsize}
 \includegraphics[width=0.99\hsize]{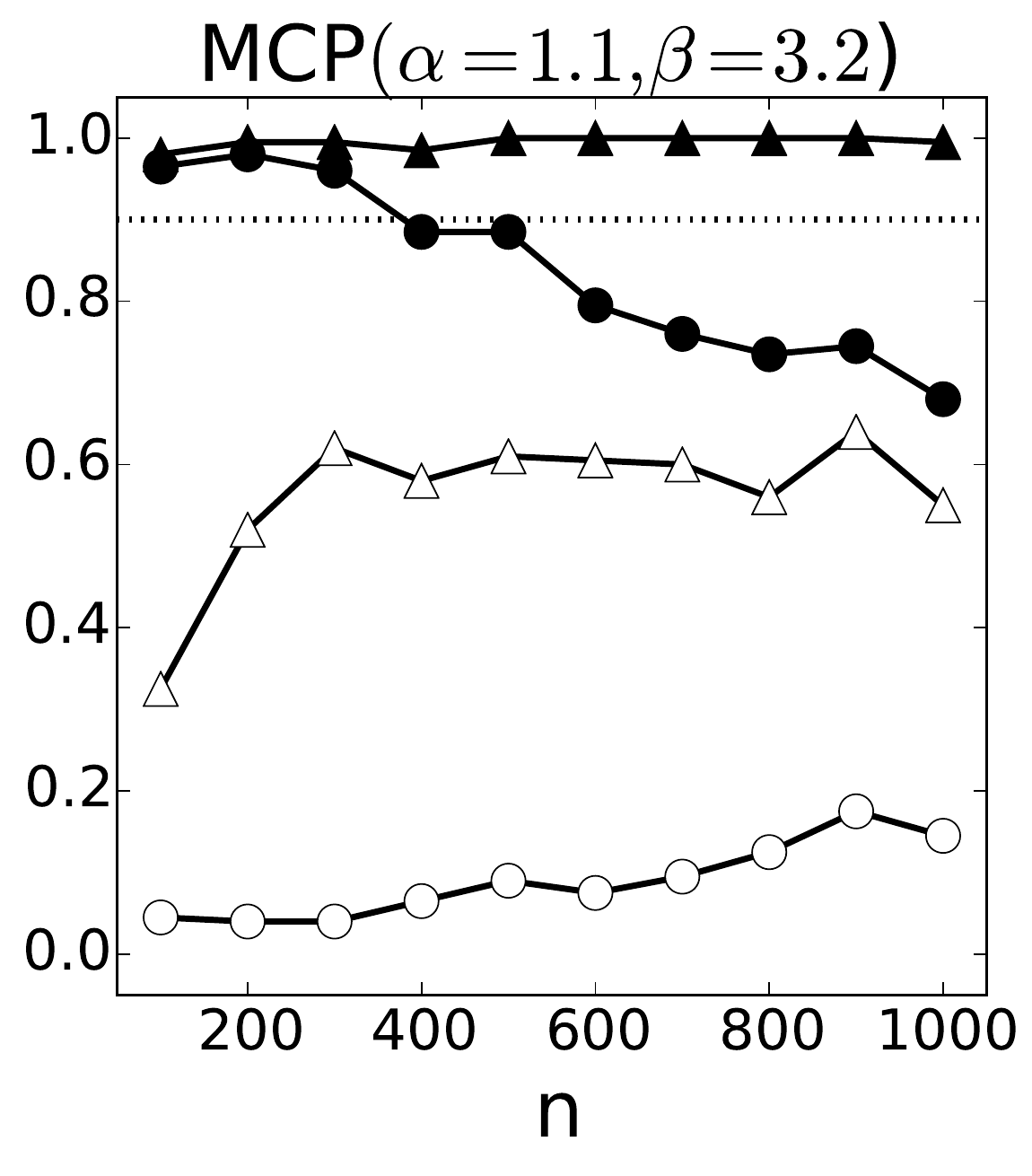}
\end{minipage}
\begin{minipage}{0.23\hsize}
 \includegraphics[width=0.99\hsize]{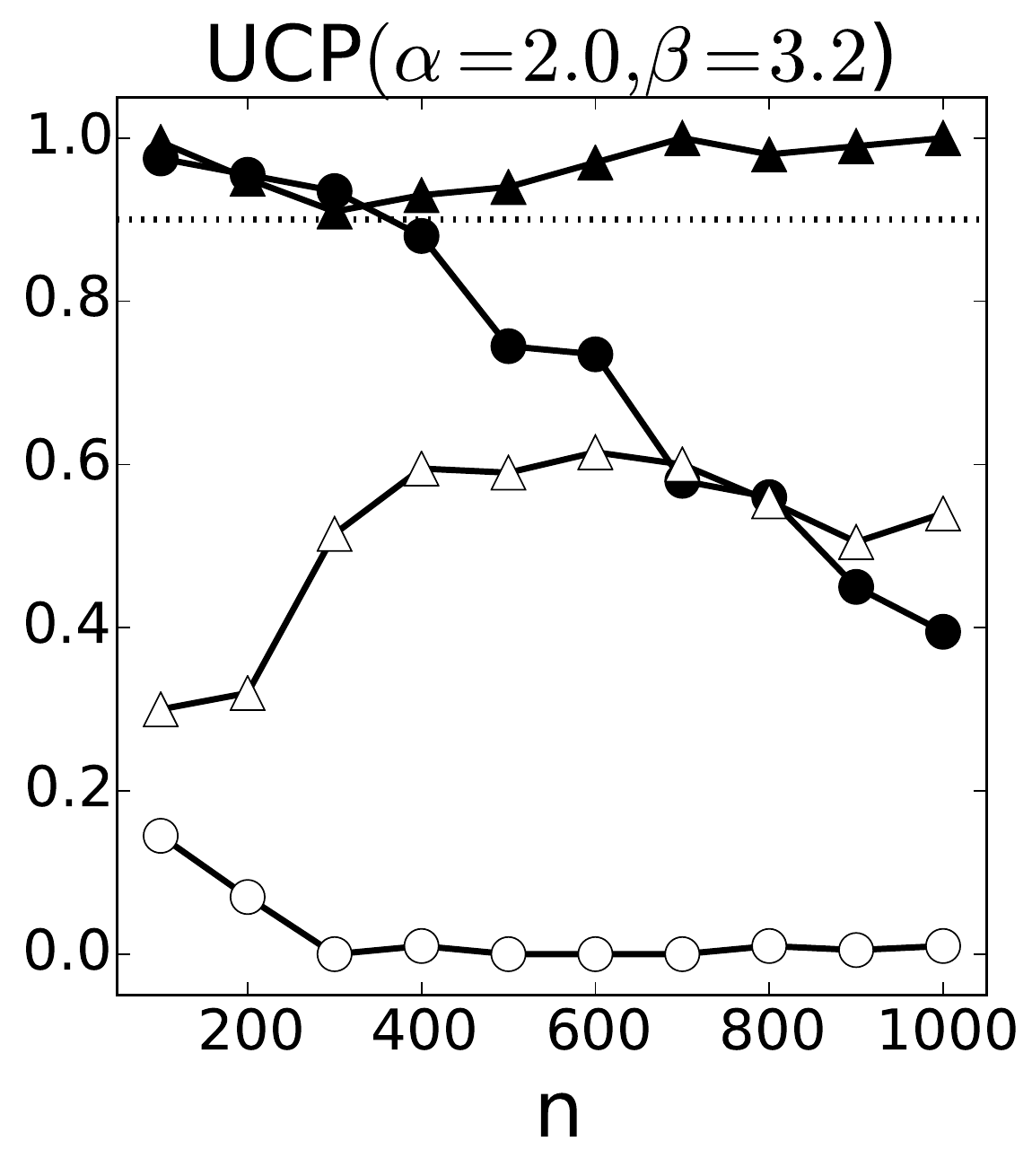}
\end{minipage}
\begin{minipage}{0.23\hsize}
 \includegraphics[width=0.99\hsize]{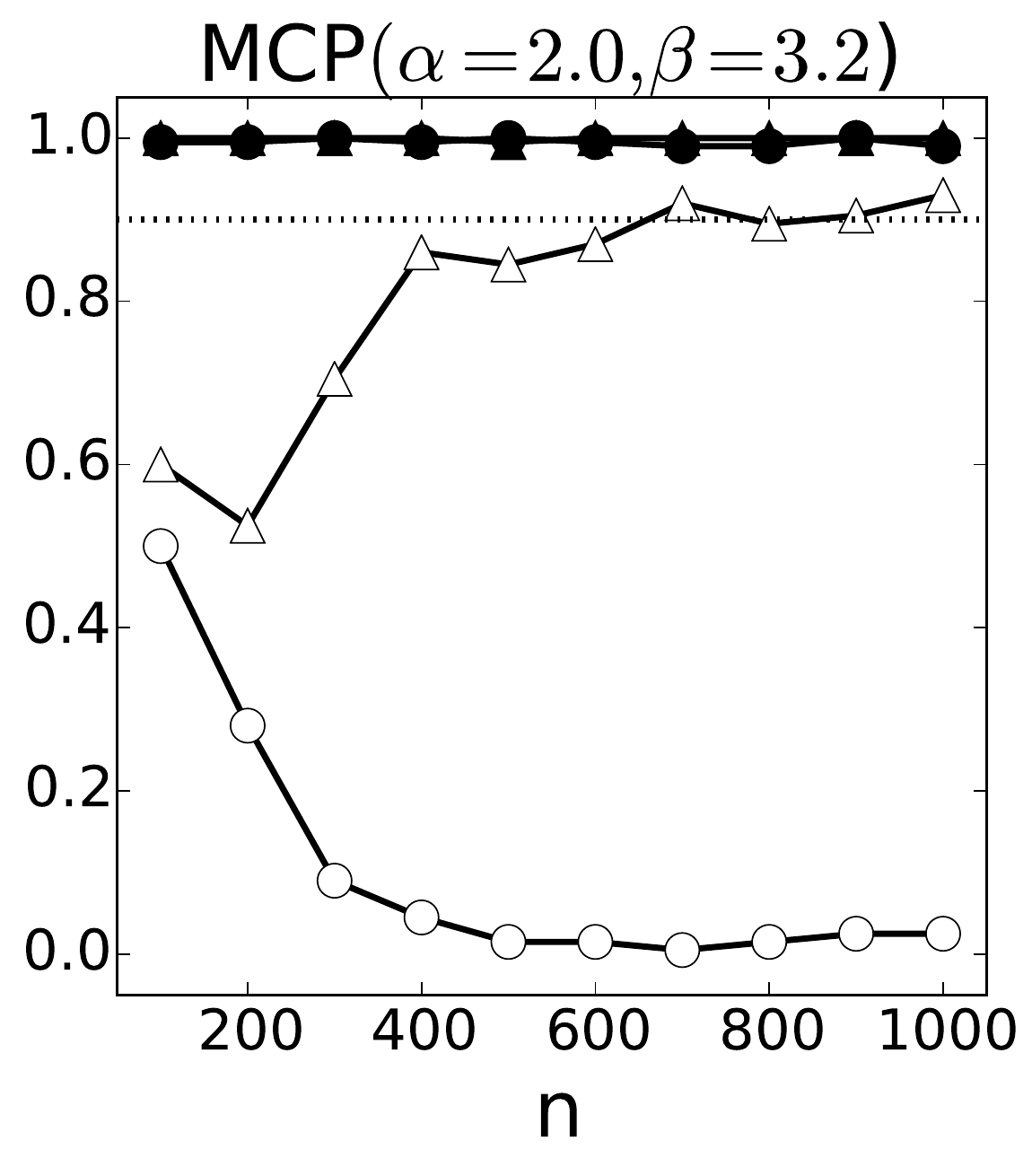}
\end{minipage}
 \end{center}
\caption{Coverage probabilities with Gaussian noise (upper 2 rows) and $\chi^2$ noise (lower 2 rows). Black markers show coverage probabilities of our band \eqref{eq: confidence band}, while white markers show those of the MS band \eqref{eq: MS band2}. Circles correspond to cases with cut-off level $\max\{\hat{m}_n,2\}$, triangles to those with $\hat{m}_n + 1$. The dashed line shows the value $1-\tau_1 = 0.90$.\label{fig:simu_graph}}
\end{figure}

\fi

\begin{figure}[htbp]
\begin{center}
    \includegraphics[width=0.99\hsize]{./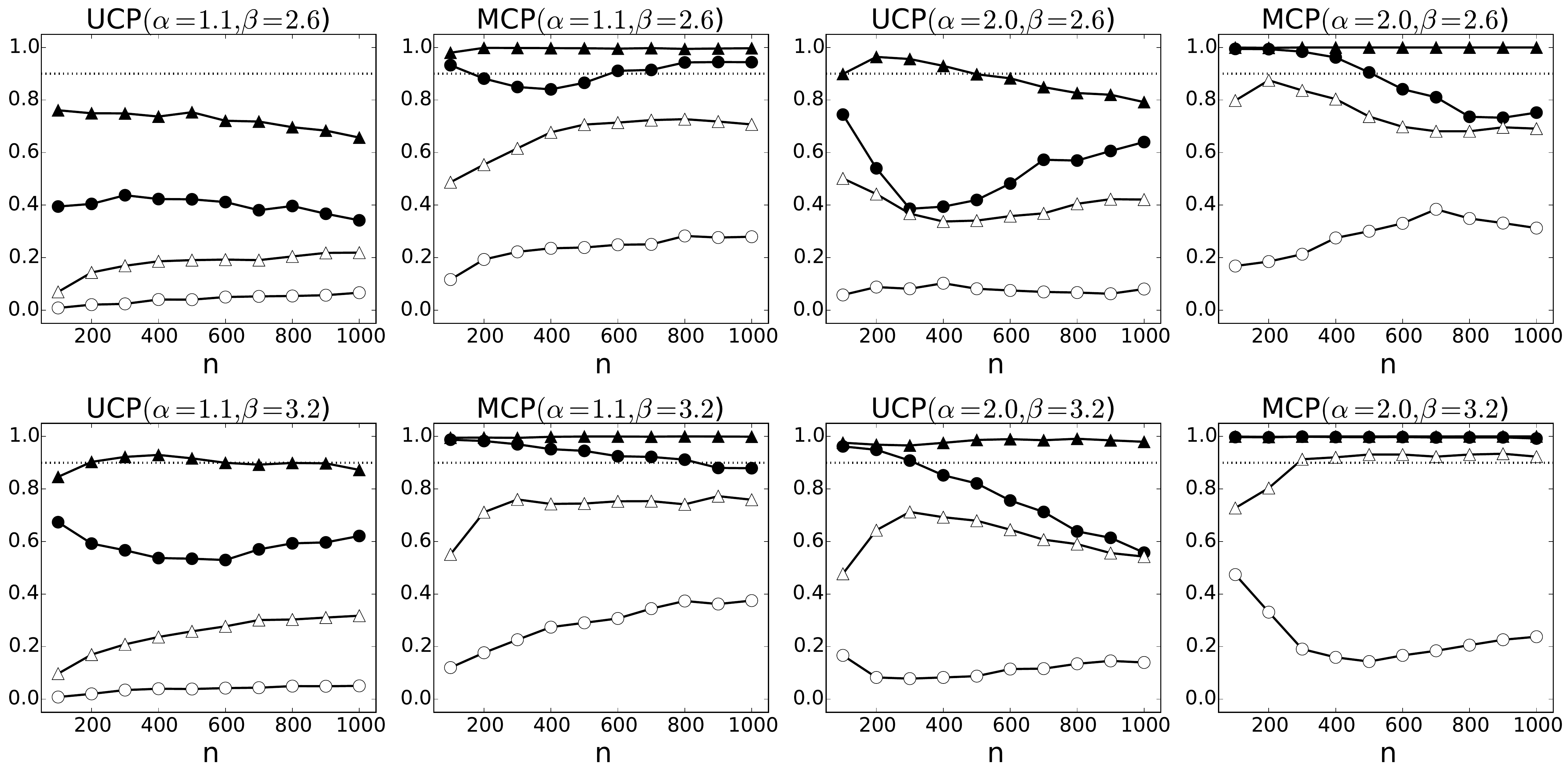}\\
    \includegraphics[width=0.99\hsize]{./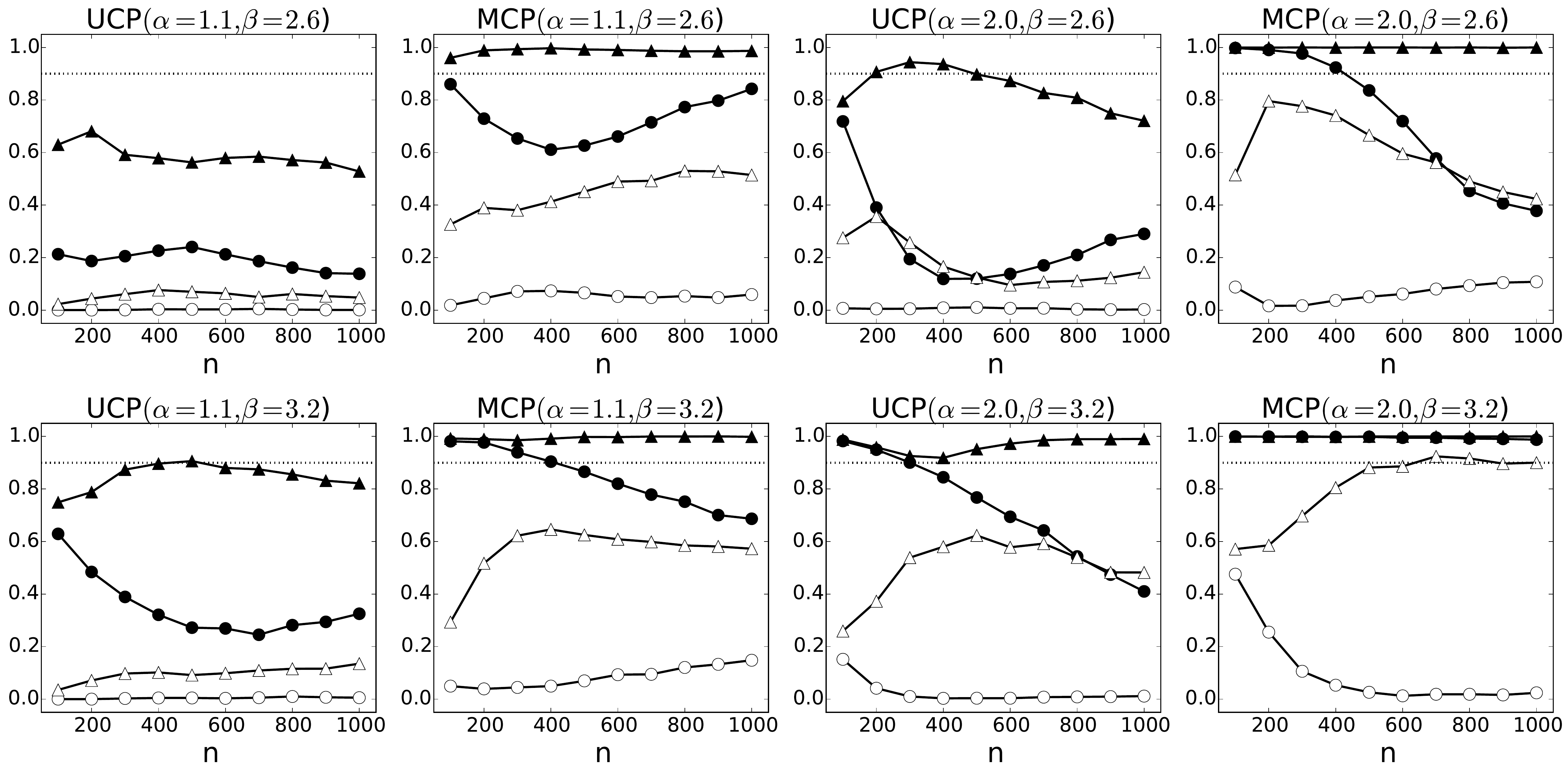}\\    
\end{center}
\caption{Coverage probabilities with Gaussian noise (upper 2 rows) and $\chi^2$ noise (lower 2 rows). Black markers show coverage probabilities of our band \eqref{eq: confidence band}, while white markers show those of the MS band \eqref{eq: MS band2}. Circles correspond to cases with cut-off level $\max\{\hat{m}_n,2\}$, triangles to those with $\hat{m}_n + 1$. The dashed line shows the value $1-\tau_1 = 0.90$.\label{fig:simu_graph}}
\end{figure}

Now, we shall look at the performance of the confidence bands. 
The simulated coverage probabilities 
are plotted in Figure \ref{fig:simu_graph}.
The following observations can be drawn from the figure: 1) the coverage probabilities of the MS band, either in UCP or MCP,  tend to be far below the nominal coverage probability $90 \%$. 2) The MCPs of our band with cut-off level $\hat{m}_{n}+1$ satisfy the nominal level in all cases, and those with cut-off level $\max \{ \hat{m}_{n},2 \}$ are reasonably close to the nominal level except for a few cases.  Note that our band with cut-off level $\hat{m}_{n}+1$ appears to be conservative, but this is not inconsistent with the theory. 
3) Although our band is not designed to control UCP, our band with cut-off level $\hat{m}_{n}+1$ has reasonably good UCPs.

\begin{figure}[htp]
 \begin{center}
\begin{minipage}{0.23\hsize}
\includegraphics[width=0.99\hsize]{./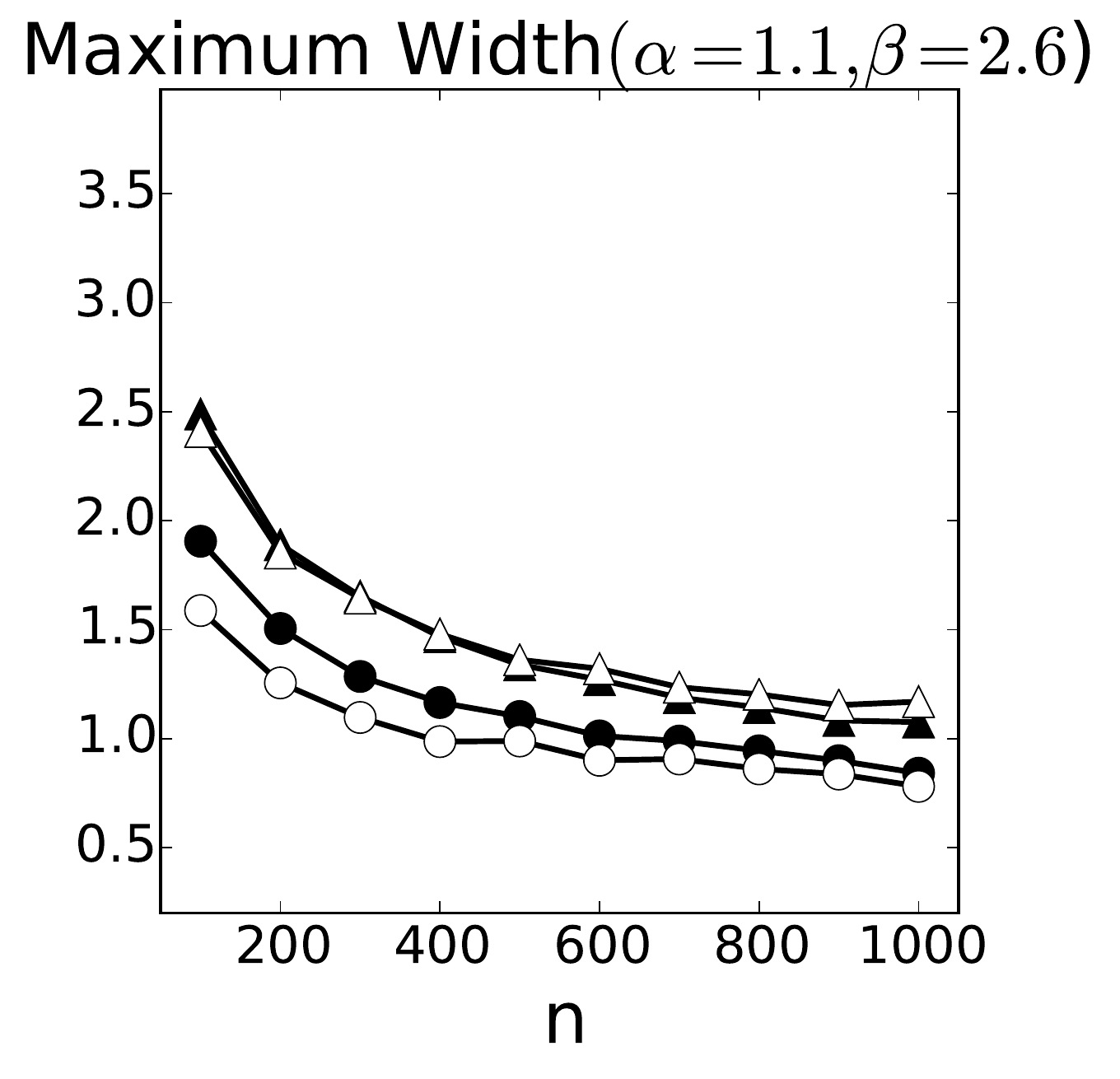}
\end{minipage}
\begin{minipage}{0.23\hsize}
\includegraphics[width=0.99\hsize]{./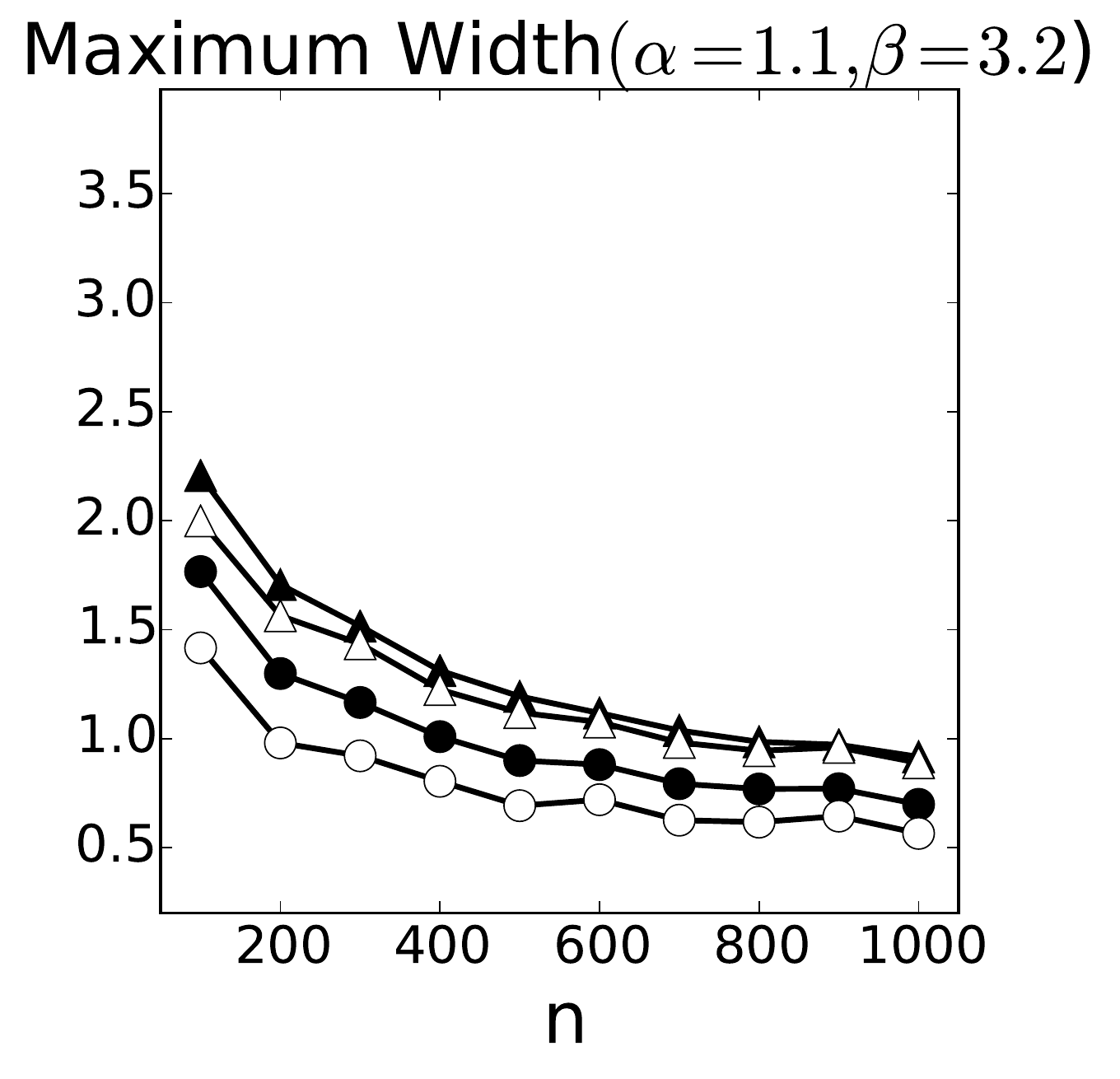}
\end{minipage}
\begin{minipage}{0.23\hsize}
\includegraphics[width=0.99\hsize]{./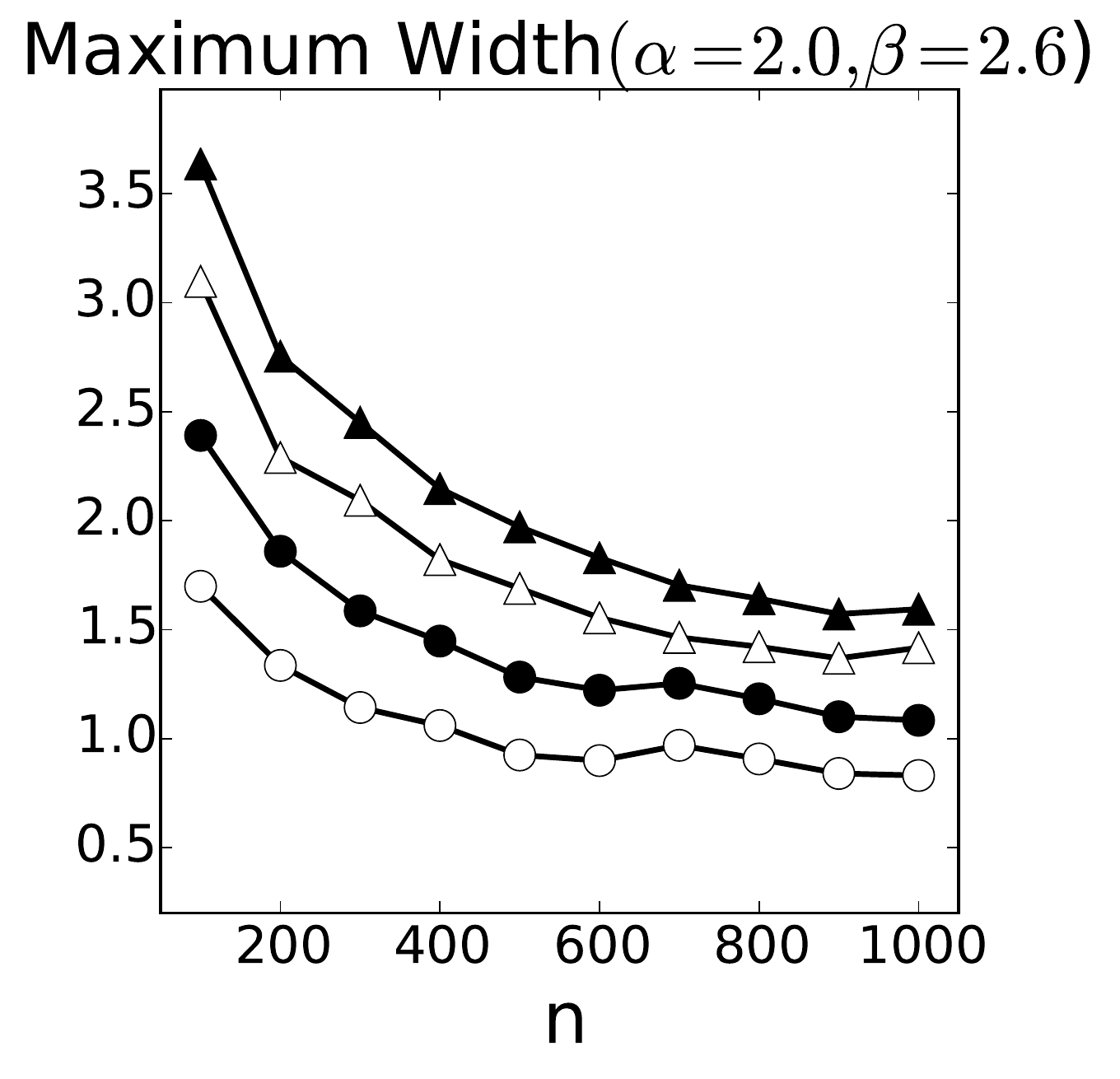}
\end{minipage}
\begin{minipage}{0.23\hsize}
\includegraphics[width=0.99\hsize]{./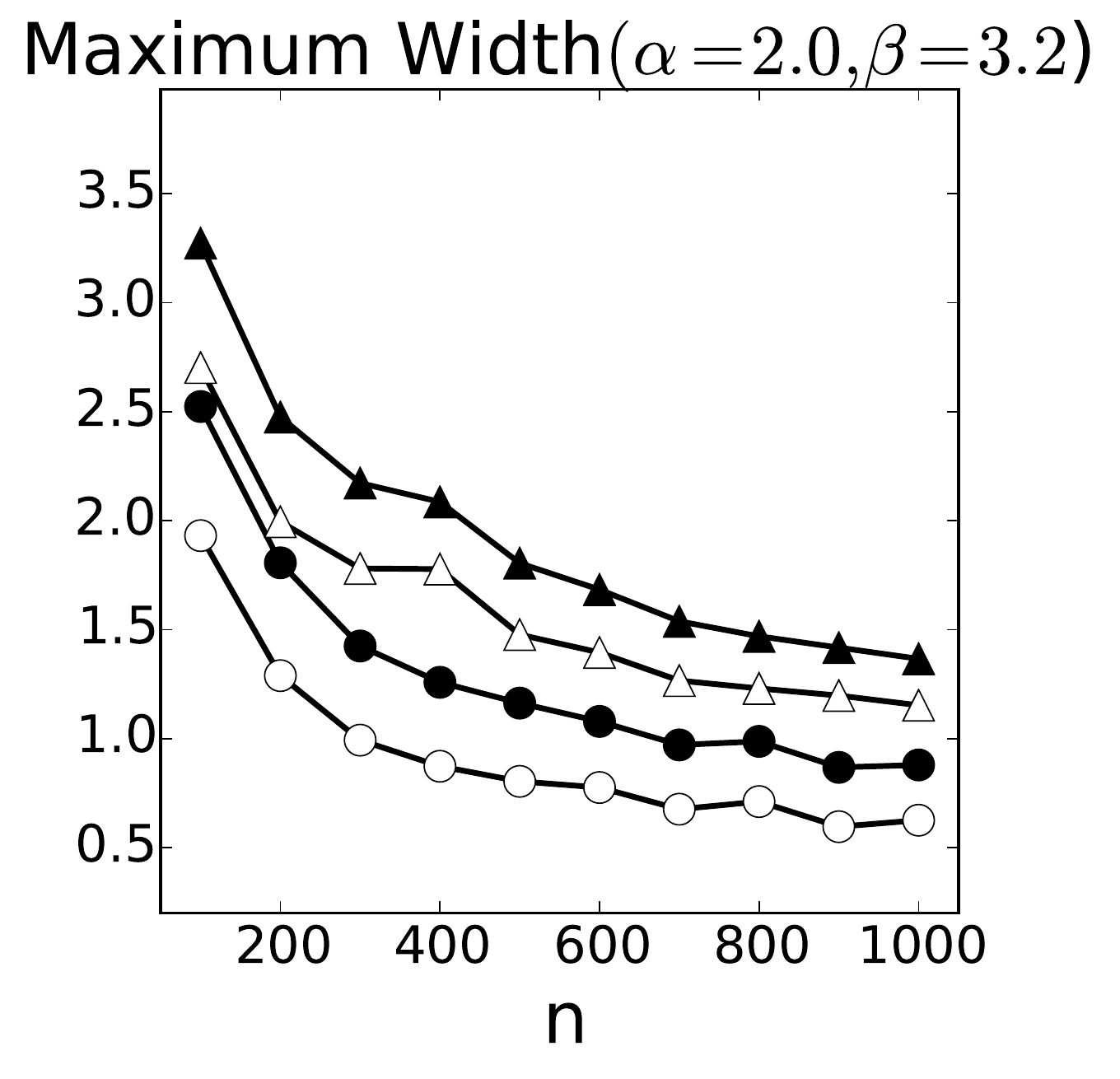}
\end{minipage}
 \end{center}
  \begin{center}
\begin{minipage}{0.23\hsize}
\includegraphics[width=0.99\hsize]{./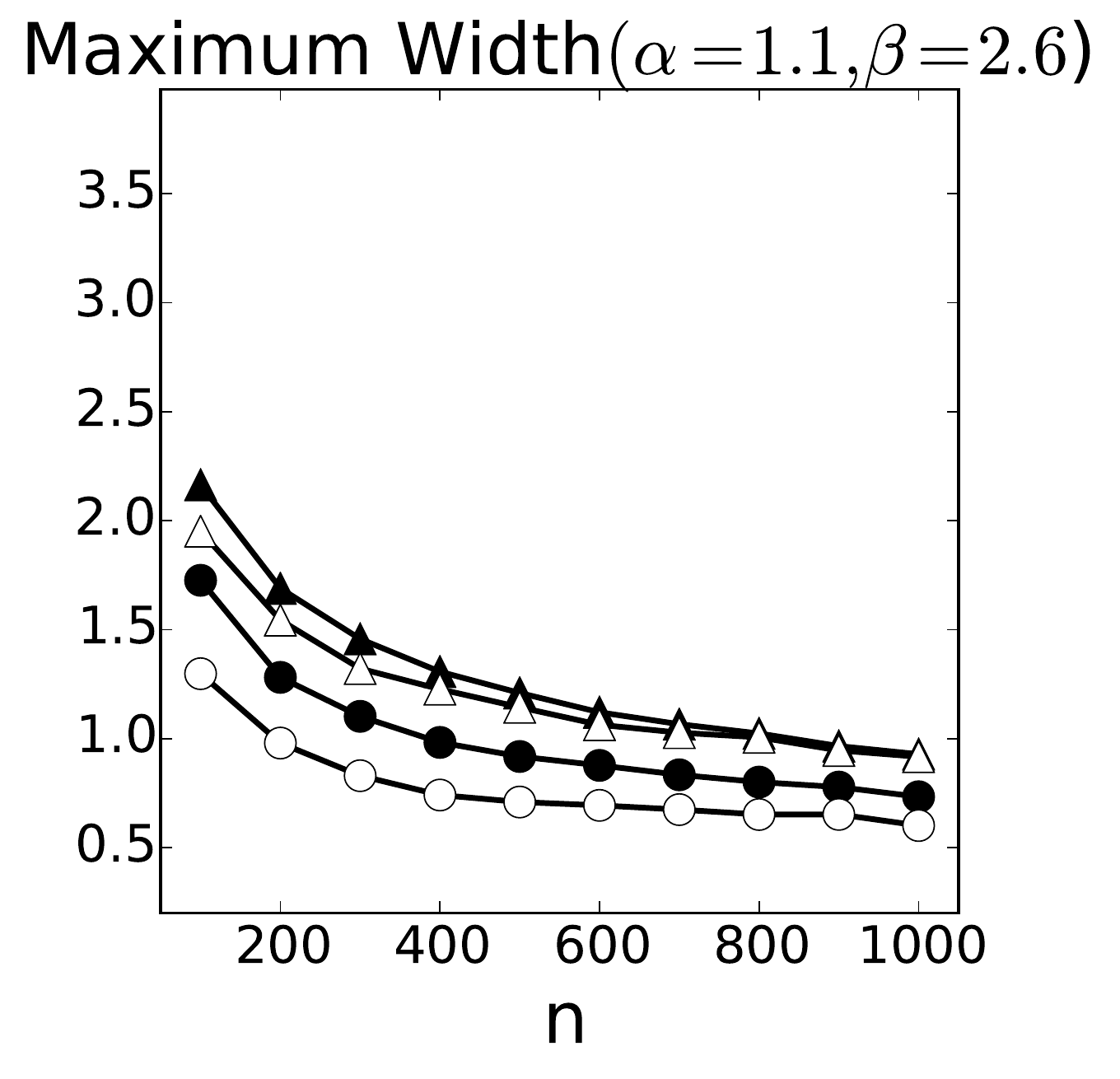}
\end{minipage}
\begin{minipage}{0.23\hsize}
\includegraphics[width=0.99\hsize]{./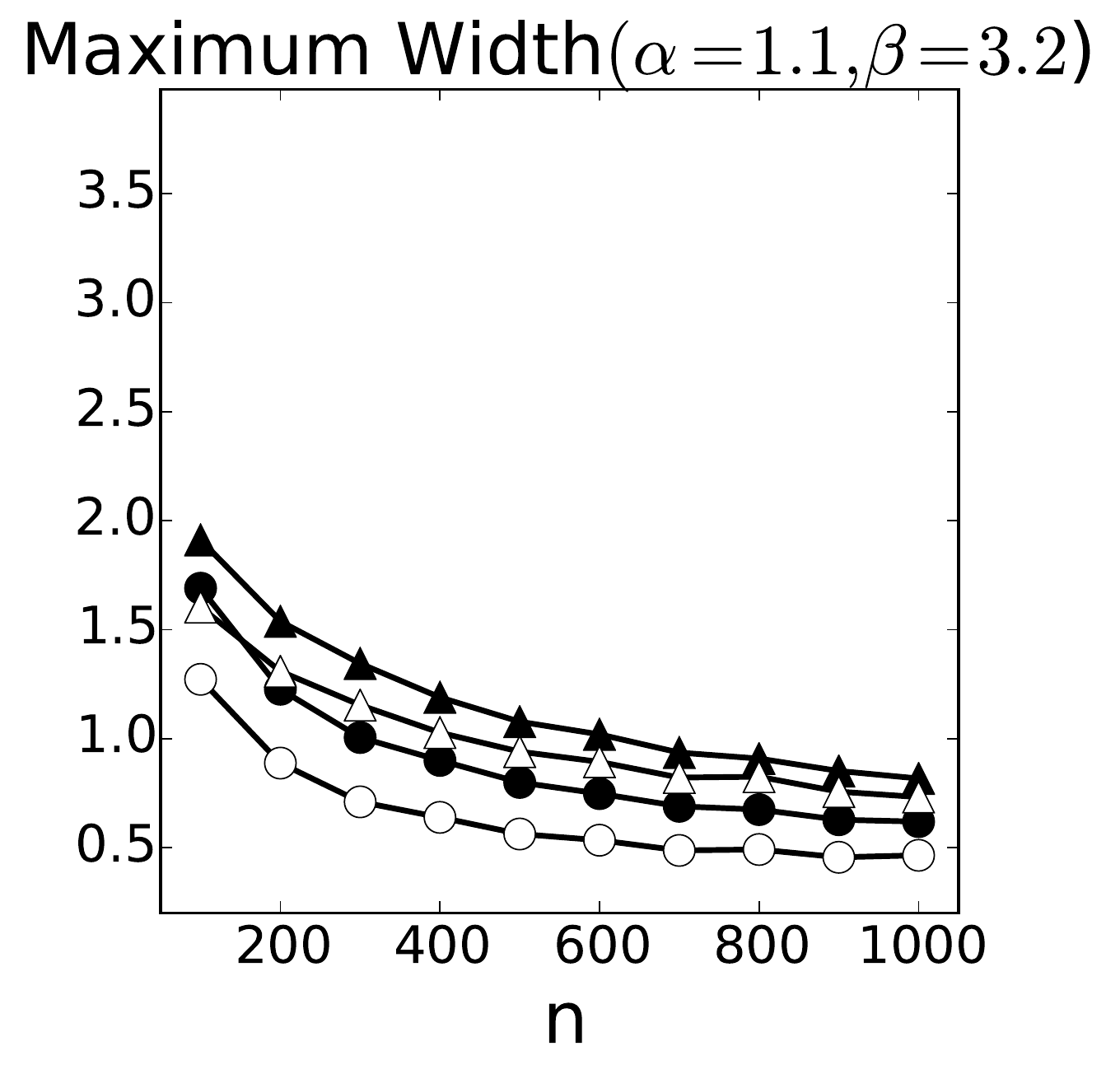}
\end{minipage}
\begin{minipage}{0.23\hsize}
\includegraphics[width=0.99\hsize]{./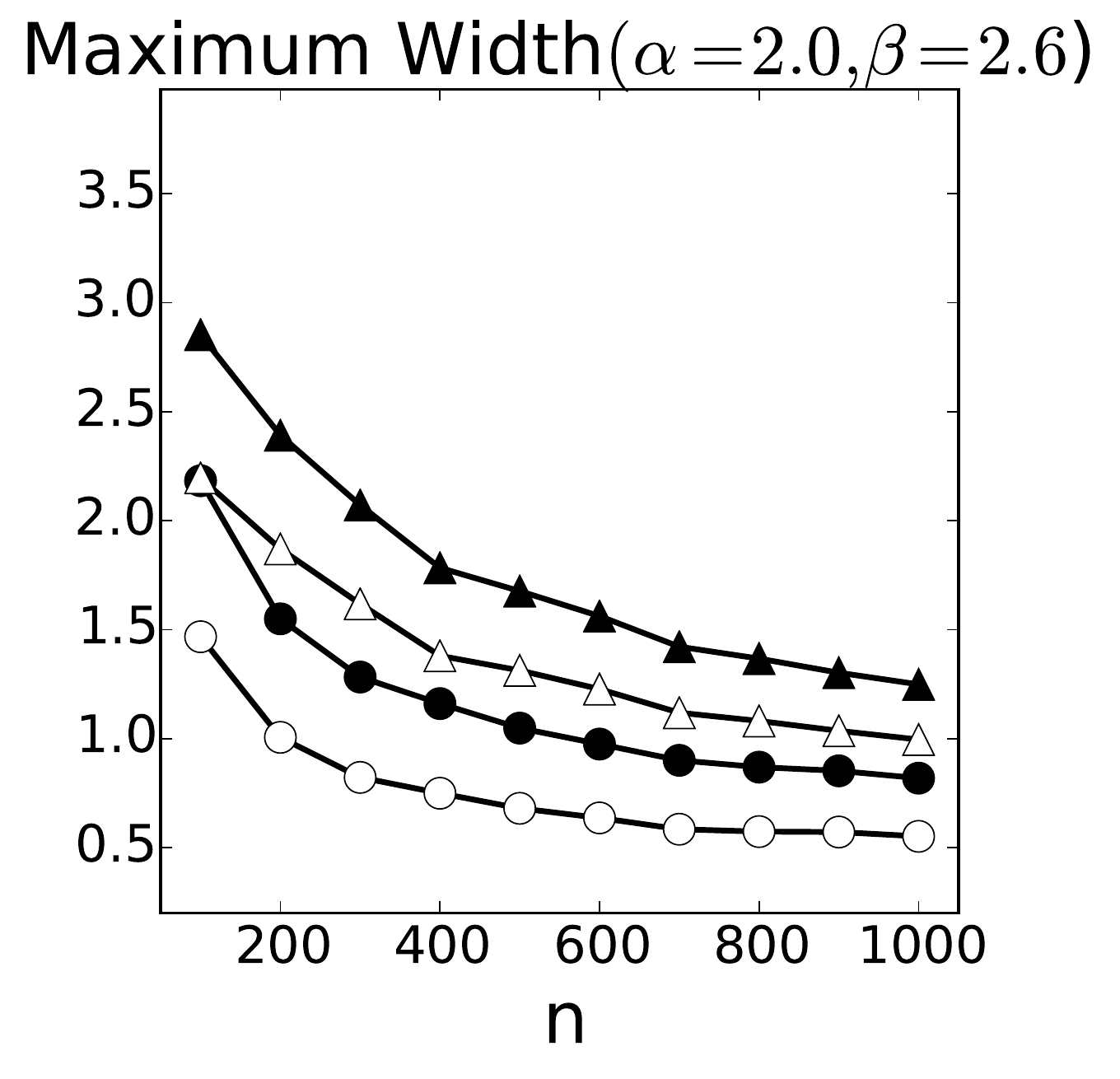}
\end{minipage}
\begin{minipage}{0.23\hsize}
\includegraphics[width=0.99\hsize]{./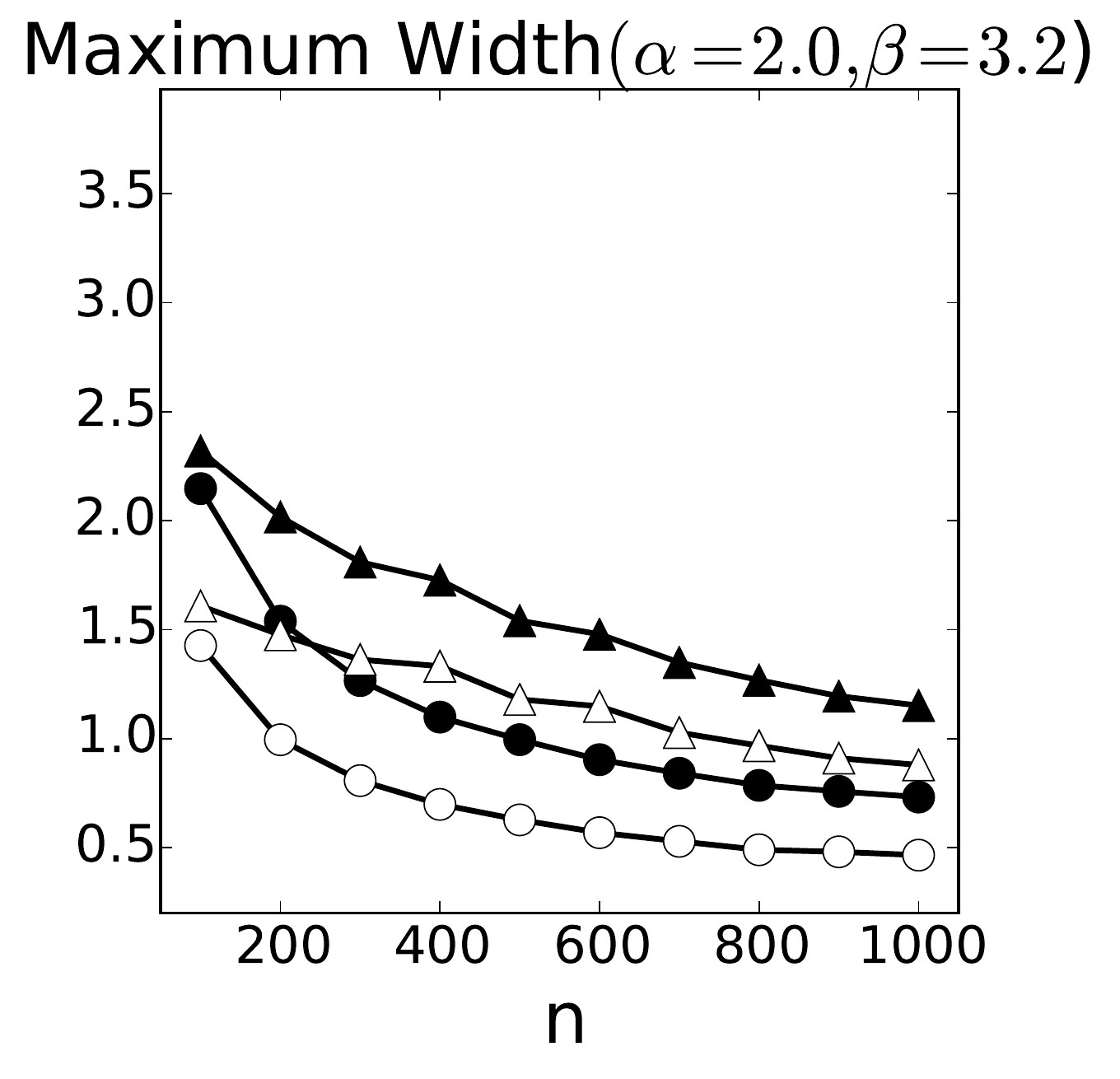}
\end{minipage}
 \end{center}
\caption{Expected maximum width of confidence bands with Gaussian noise (upper row) and $\chi^2$ noise (lower row). Black markers correspond to our band \eqref{eq: confidence band}, while white markers to the MS band \eqref{eq: MS band2}. Circles correspond to cases with cut-off level $\max\{\hat{m}_n,2\}$, triangles to those with $\hat{m}_n + 1$.\label{fig:simu_maxbsize}}
\end{figure}

\begin{figure}[htp]
 \begin{center}
\begin{minipage}{0.23\hsize}
\includegraphics[width=0.99\hsize]{./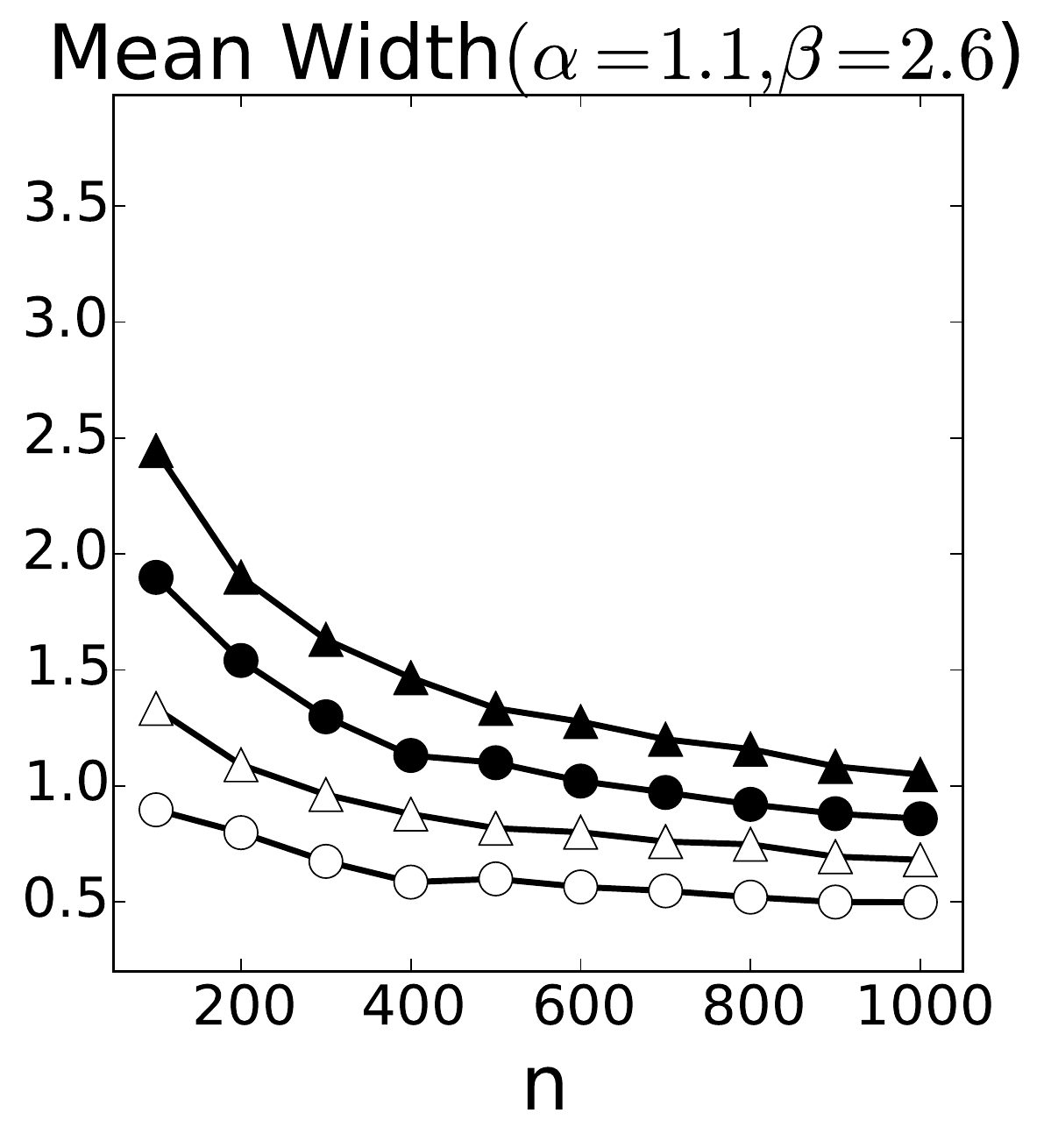}
\end{minipage}
\begin{minipage}{0.23\hsize}
\includegraphics[width=0.99\hsize]{./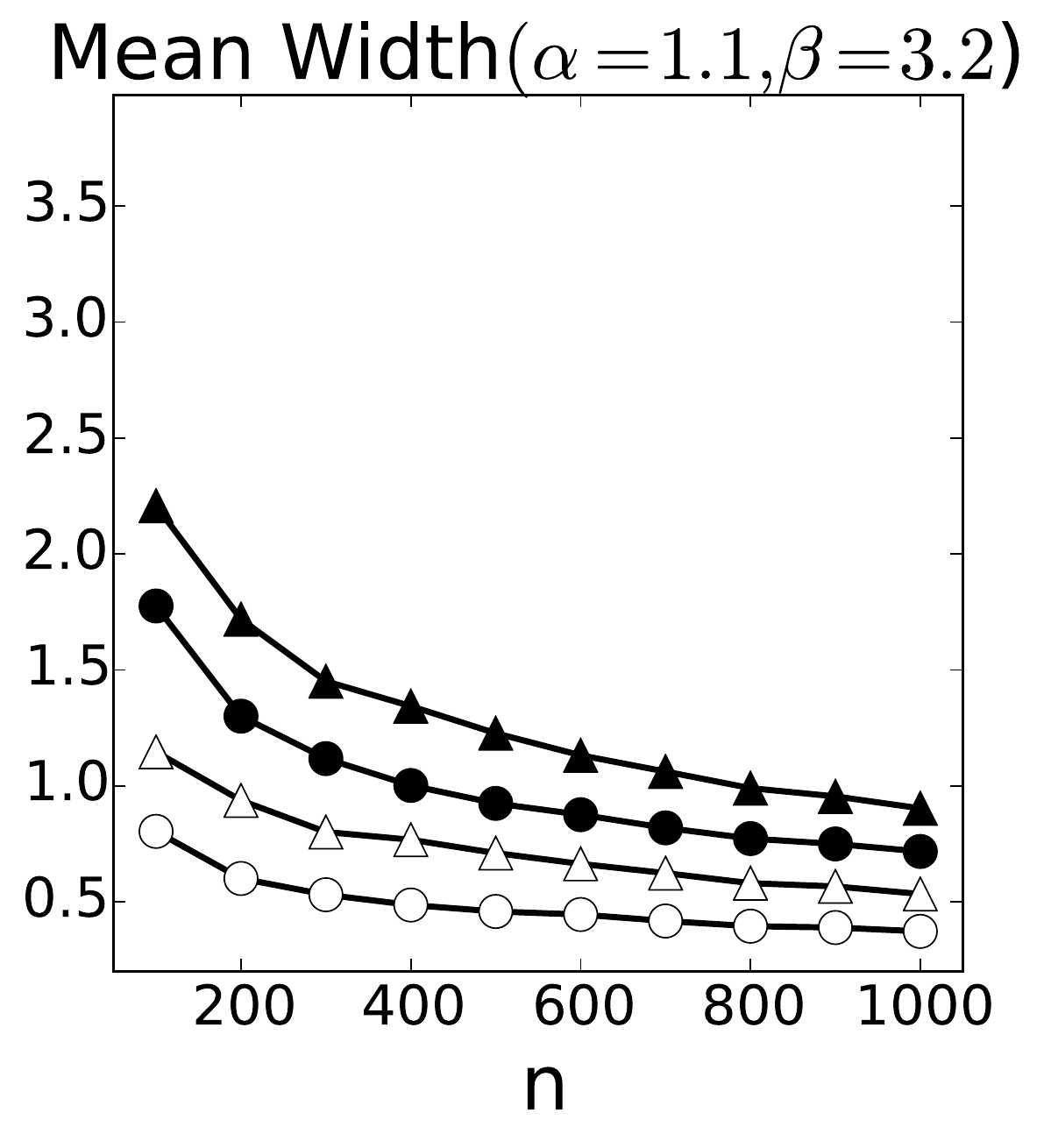}
\end{minipage}
\begin{minipage}{0.23\hsize}
\includegraphics[width=0.99\hsize]{./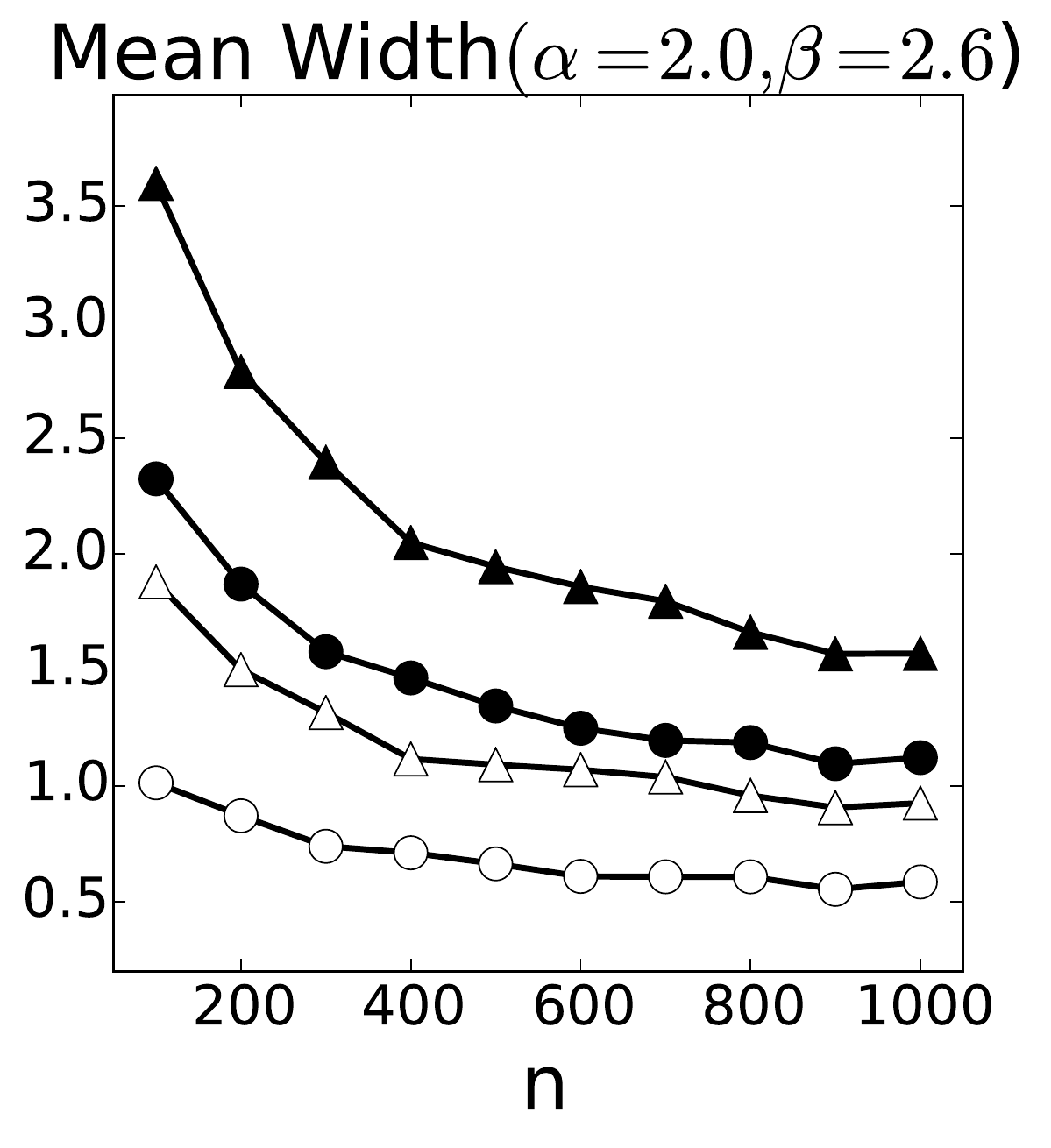}
\end{minipage}
\begin{minipage}{0.23\hsize}
\includegraphics[width=0.99\hsize]{./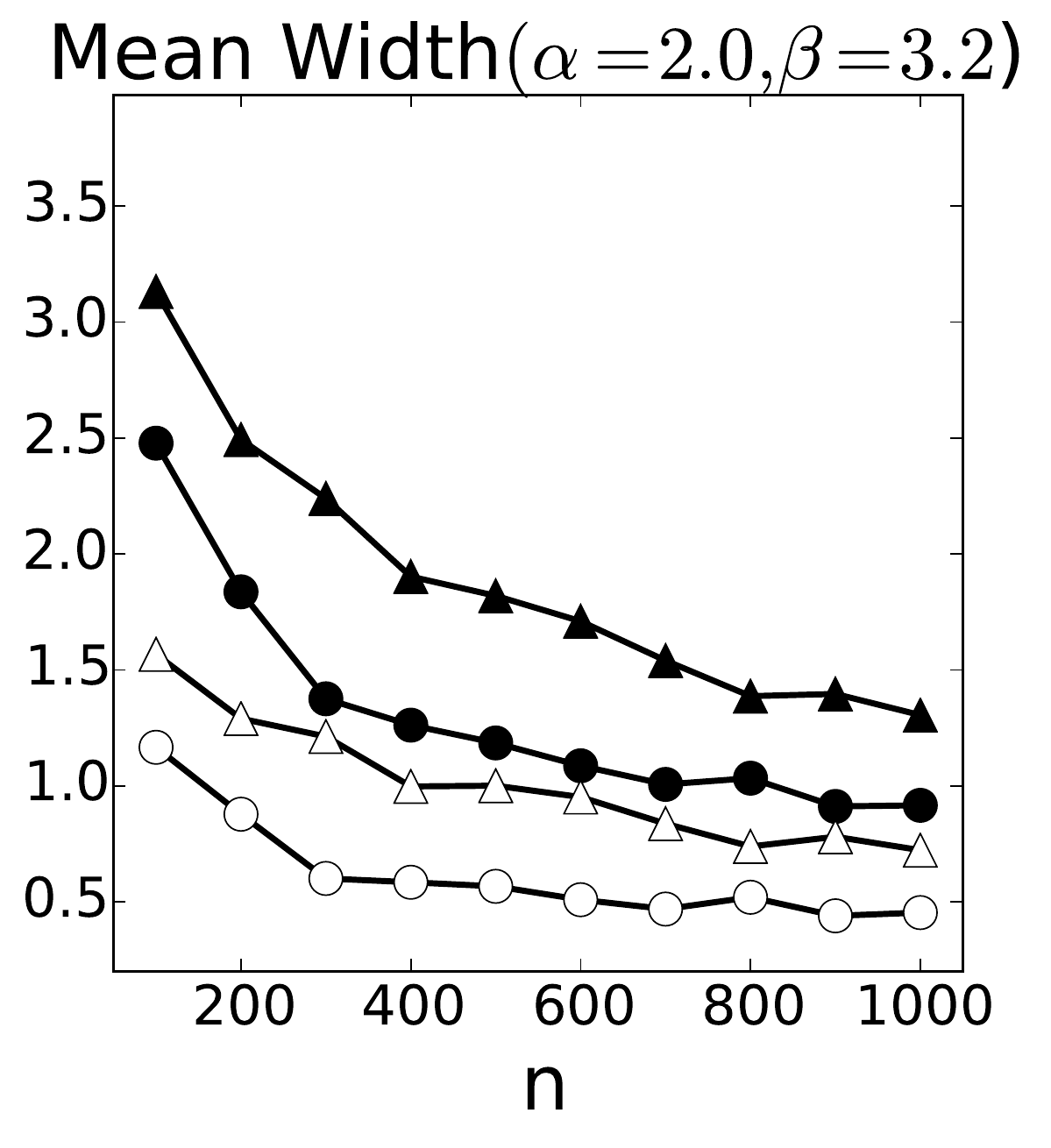}
\end{minipage}
 \end{center}
  \begin{center}
\begin{minipage}{0.23\hsize}
\includegraphics[width=0.99\hsize]{./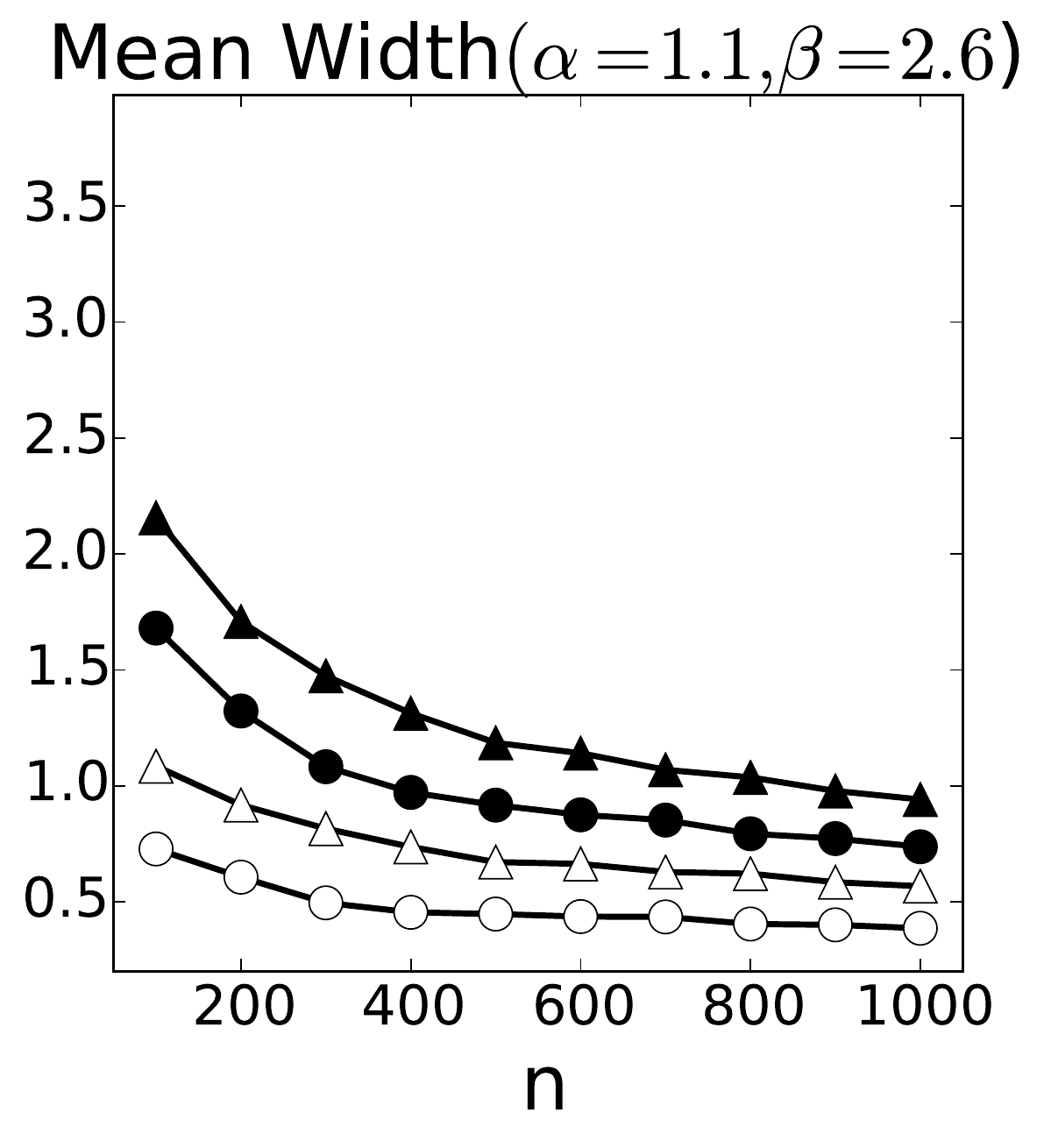}
\end{minipage}
\begin{minipage}{0.23\hsize}
\includegraphics[width=0.99\hsize]{./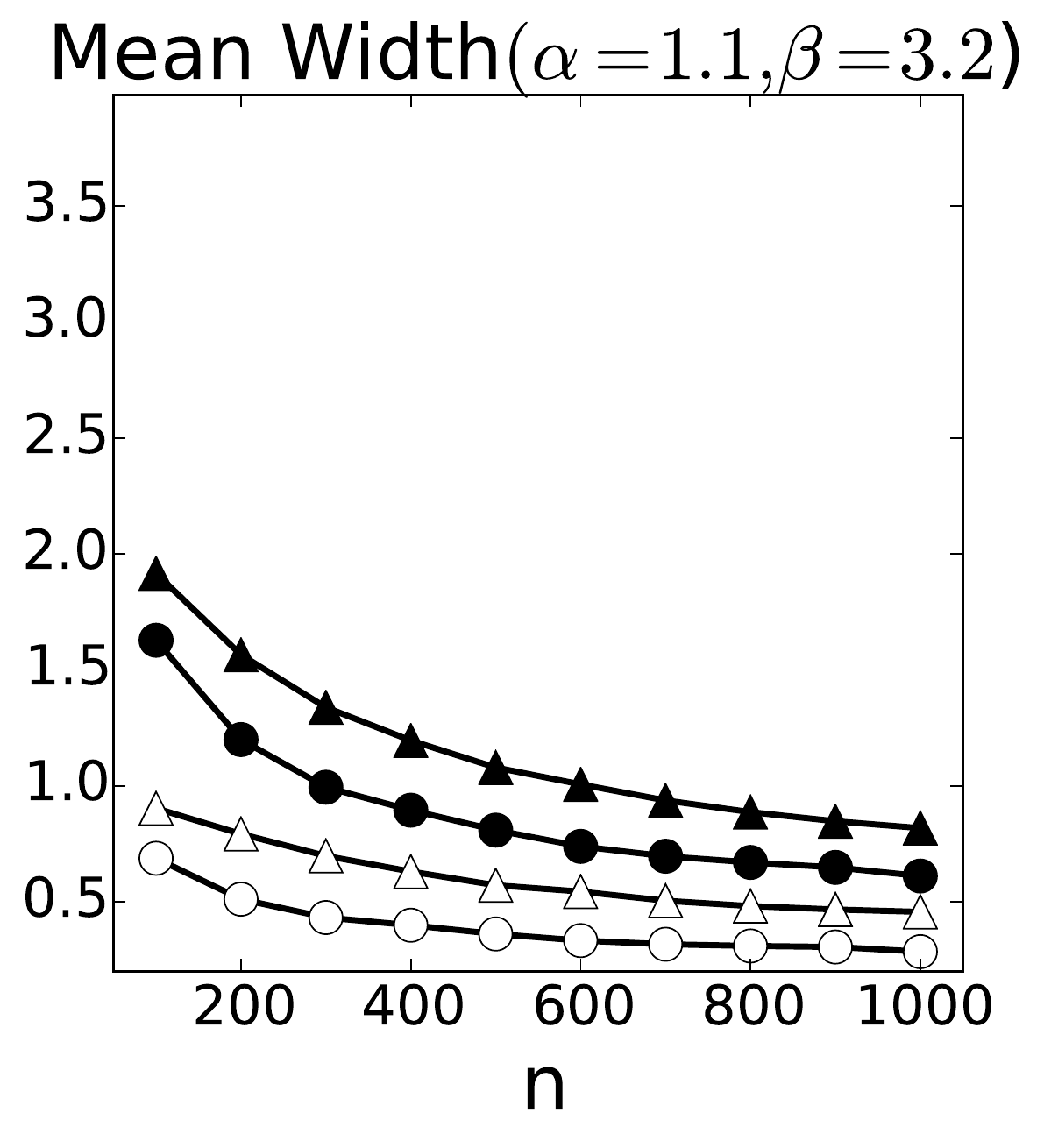}
\end{minipage}
\begin{minipage}{0.23\hsize}
\includegraphics[width=0.99\hsize]{./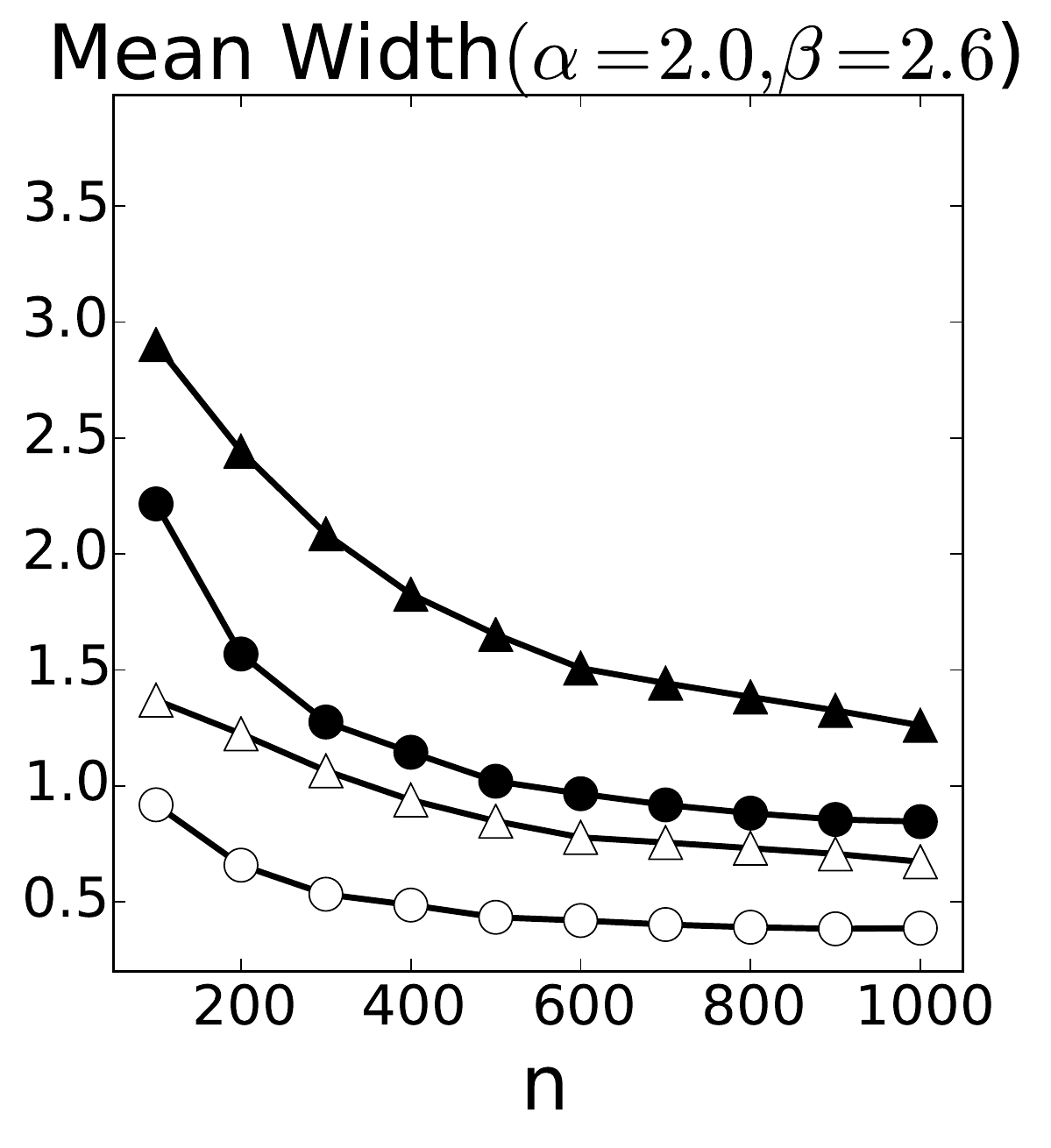}
\end{minipage}
\begin{minipage}{0.23\hsize}
\includegraphics[width=0.99\hsize]{./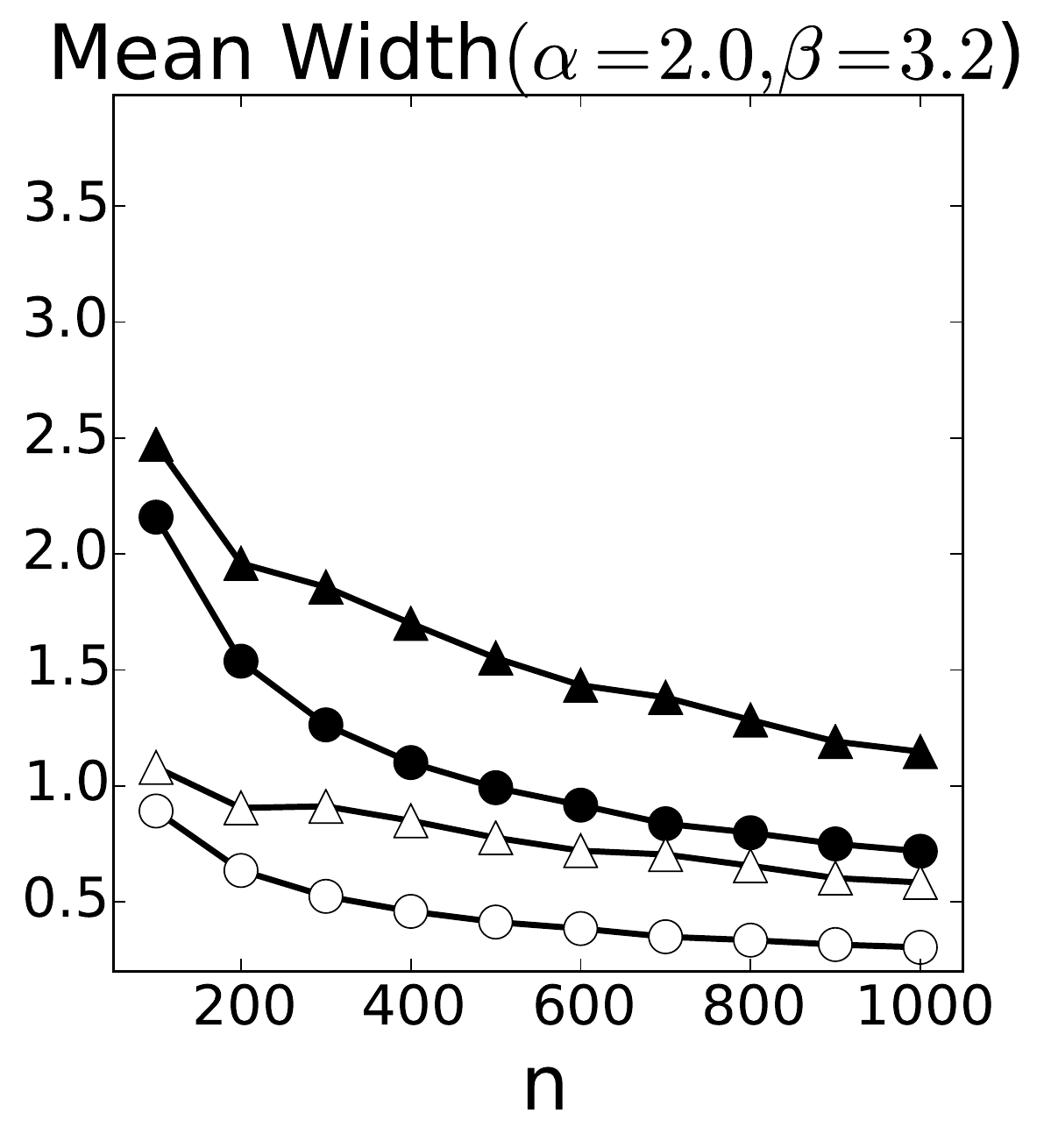}
\end{minipage}
 \end{center}
\caption{Expected mean width of confidence bands with Gaussian noise (upper row) and $\chi^2$ noise (lower row). Black markers correspond to our band \eqref{eq: confidence band}, while white markers to the MS band \eqref{eq: MS band2}. Circles correspond to cases with cut-off level $\max\{\hat{m}_n,2\}$, triangles to those with $\hat{m}_n + 1$.\label{fig:simu_meanbsize}}
\end{figure}

The simulation results on the expected maximum width and expected mean width of our confidence band \eqref{eq: confidence band} and MS band \eqref{eq: MS band2} are plotted in Figures \ref{fig:simu_maxbsize} and \ref{fig:simu_meanbsize}. For a confidence band $\mathcal{C} = \{ [\ell (t), u(t) ] : t \in I \}$, the expected maximum width and expected mean width are defined by 
\[
\Ep \left [ \max_{t \in I} \{ u(t) - \ell (t) \} \right ] \quad \text{and} \quad \Ep \left [ \frac{1}{\lambda (I)} \int_{I} \{ u(t) - \ell (t) \} dt \right ],
\]
respectively. Note that our confidence band has constant width, so that the maximum and mean widths are identical for our band.
From these figures, it is observed that our confidence band \eqref{eq: confidence band} tends to have larger width than the MS band \eqref{eq: MS band2}, which is not surprising in view of the comparison of the coverage probabilities of the bands. Namely, the MS band has narrower widths, but this is at the cost of (severe) under-coverages.
However, the width of our band is not excessively large compared with the MS band.

It is worth noting that 
in some cases, the UCPs and MCPs of our band with cut-off level $\max\{\hat{m}_n,2\}$ decrease as $n$ increases.
This is partly because the bias has non-negligible effects relative to the width of the band, since the choice $\max \{ \hat{m}_{n},2 \}$ is in fact not undersmoothing the function estimate.

In conclusion, the simulation results suggest that, in terms of the coverage probability,  our confidence band with cut-off level  $\hat{m}_{n}+1$ is recommended, but using the cut-off level $\max \{ \hat{m}_{n}, 2 \}$ would be an alternative option if one prefers narrower confidence bands. 


\subsection{Spectrometric data for predicting fat content}

To see how our methodology works for real data, we apply our confidence band for regression of fat content in pieces of meat on spectra of light absorption of these substances. In chemometrics, one often observes a spectrum of light absorption of a substance measured  at different wavelengths, 
and such spectral curves can be regarded as functional data; see \cite{BoTh92} and Chapter 5 of \cite{FeVi06}.
The analysis with spectrometric data is quick and non-destructive, and thus it is often used for investigating the properties of e.g. a food sample.

We use the spectrometric data from \url{http://lib.stat.cmu.edu/datasets/tecator} and apply them for predicting the fat content in pieces of pure meat.
In the dataset, we observe spectral curves from 215 pieces of finely chopped meat (recorded on a Tecator Infratec Food and Feed Analyzer) measured from wavelengths 850 nm to 1050 nm.
Let $\{ X_i(t) : t \in [ 850,1050 ] \}_{i=1}^{215}$ denote these spectral curves,  the graphs of which are plotted in Figure \ref{fig:band_spec}.
The dataset also contains the fat content $Y_{i}$ in each peace of meat measured by an analytical chemical processing.

\begin{figure}[htp]
\begin{minipage}{0.32\hsize}
 \begin{center}
 \includegraphics[width=0.99\hsize]{./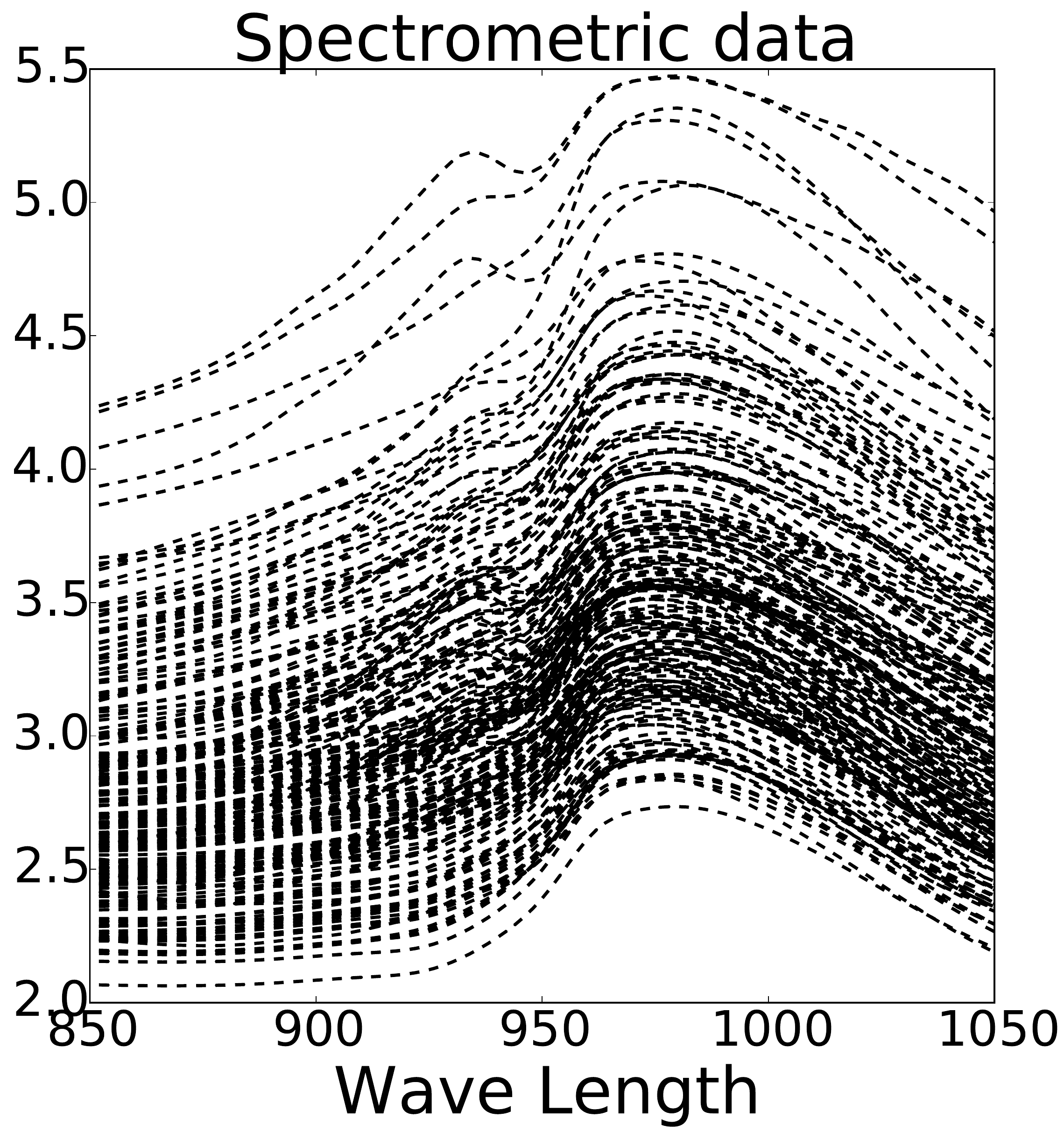}
 \end{center}
\end{minipage}
\begin{minipage}{0.32\hsize}
 \begin{center}
 \includegraphics[width=0.99\hsize]{./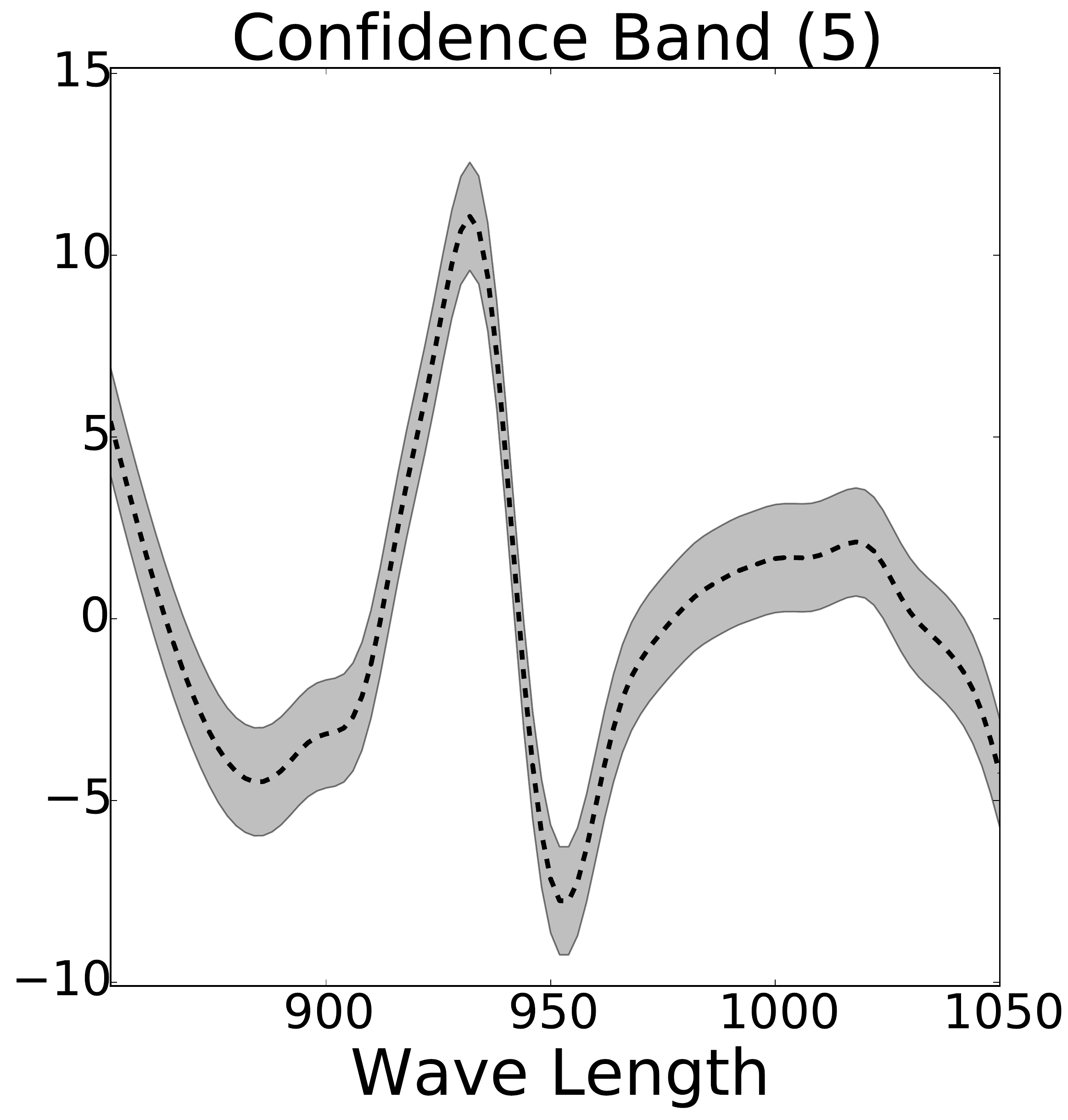}
 \end{center}
\end{minipage}
\begin{minipage}{0.32\hsize}
 \begin{center}
 \includegraphics[width=0.99\hsize]{./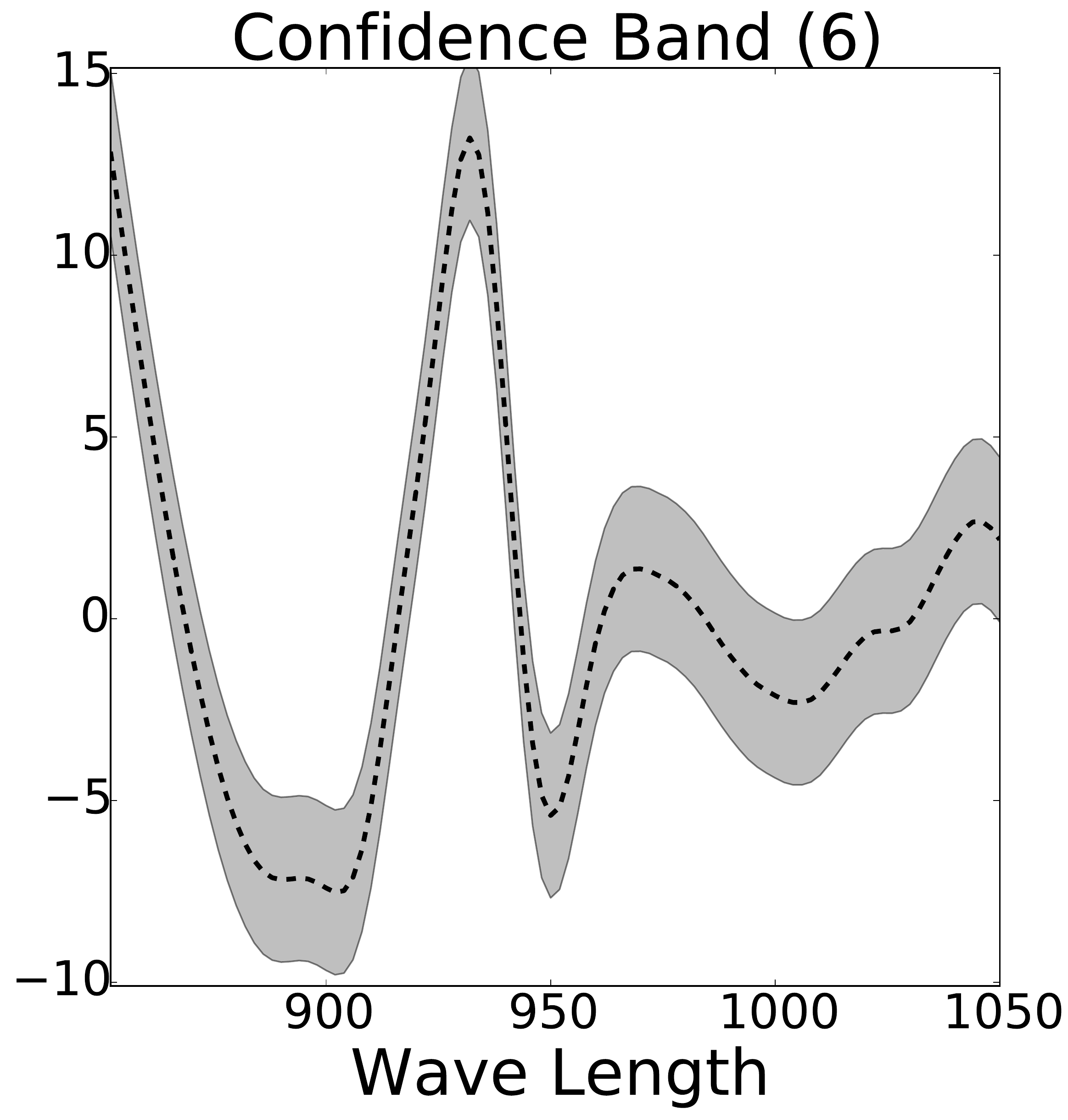}
 \end{center}
\end{minipage}
\begin{minipage}{0.32\hsize}
 \begin{center}
 \includegraphics[width=0.99\hsize]{./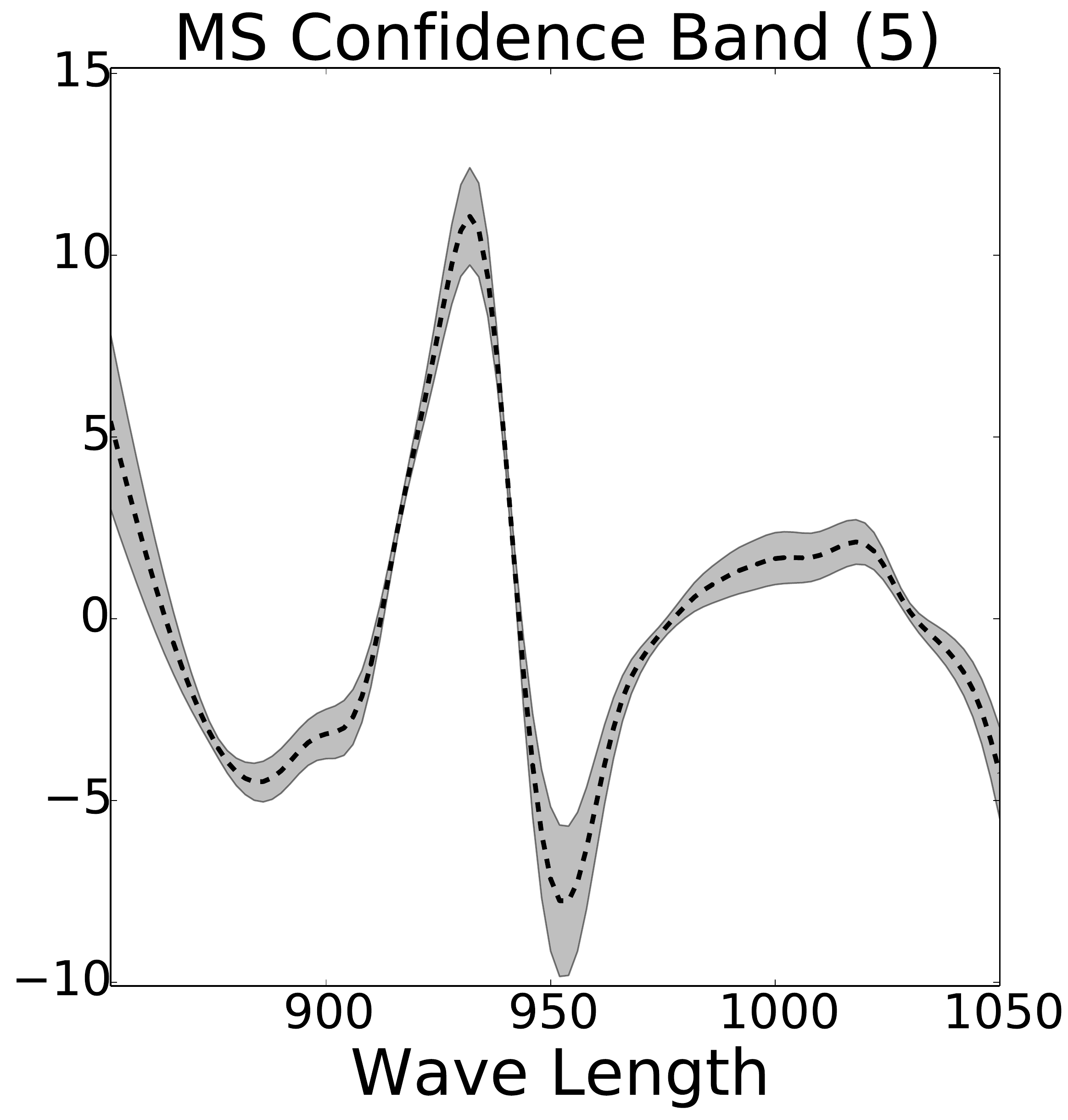}
 \end{center}
\end{minipage}
\begin{minipage}{0.32\hsize}
 \begin{center}
 \includegraphics[width=0.99\hsize]{./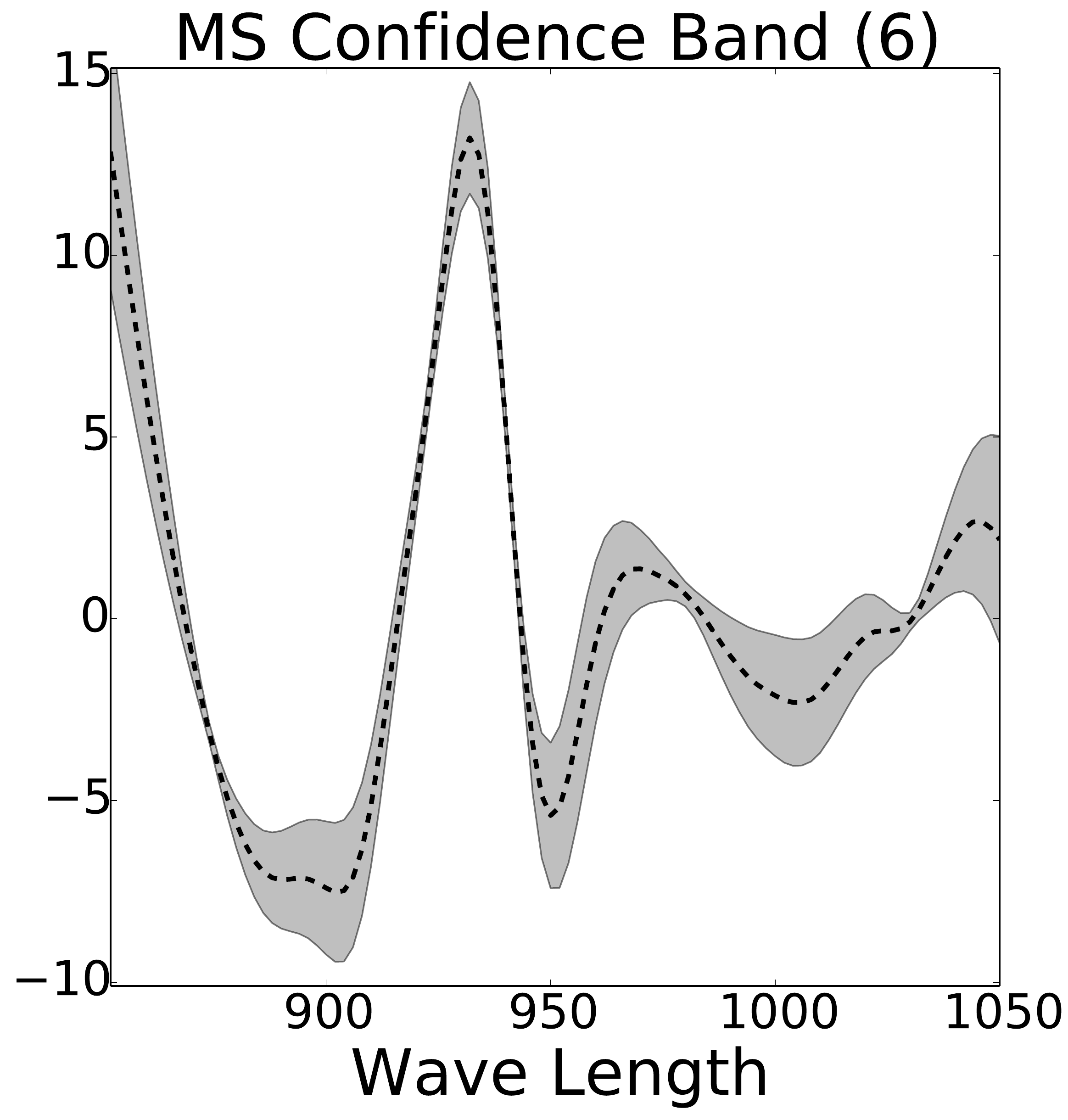}
 \end{center}
\end{minipage}
\caption{Spectrometric data (upper far left panel) and the estimates $\hat{b}$ (dashed lines) and confidence bands (gray areas).
The upper right two panels depict our confidence bands with cut-off levels $5$ (left) and $6$ (right),  and lower two panels depict the MS bands with cut-off levels $5$ (left) and $6$ (right).  \label{fig:band_spec}}
\end{figure}

The estimates $\hat{b}$ and confidence bands $\hat{\mC}$ with cut-off levels $5$ and $6$ are plotted in the upper right two panels in Figure \ref{fig:band_spec} where we set $\tau_1 = \tau_2 = 0.1$ (note: we work with the original index set $[850,1050]$; if we normalize the index set to $[0,1]$ as in Remark \ref{rem: equivariance}, then the vertical axises in the right two panels in Figure \ref{fig:band_spec} should be multiplied by $1050-850=200$). 
Note that the value of $\hat{m}_{n}$ is $5$ for this dataset.  The variance estimates are $\hat{\sigma}^{2} =  11.14$ for  $m_{n}=5$ and $\hat{\sigma}^{2} = 8.59$ for $m_{n}=6$.
For comparison, we also plot the 90\% MS bands with cut-off levels $5$ and $6$.
The figure shows that both of our bands are reasonably narrow. 

Confidence bands are useful to identify ranges of wavelengths
playing a minor (or major) role in predicting the fat content. 
Figure \ref{fig:band_spec} leads to the following two observations. 
First, there are some peaks in the estimates (negative at around $900$ nm and $950$ nm, and positive at around $930$ nm) and our confidence bands at those peaks do not contain $0$. Thus the spectra at around those wavelengths  certainly contribute to the fat content prediction.
Second, for higher wavelengths (i.e., wavelengths higher than $970$ nm), our confidence band with cut-off level $6$ almost always contains $0$, and our confidence band with cut-off level $5$ also contains $0$ except at around $1050$ nm. 
This suggests that the spectra at higher wavelengths do not contribute much to the fat content prediction.

\section{Extension to cases with additional regressors}
\label{sec: extension}

In some applications, we may want to include a finite dimensional vector regressor $Z = (Z_{1},\dots,Z_{d})^{T} \in \R^{d}$ which we assume to include the constant $Z_{1} \equiv 1$, in addition to a functional regressor $X$ \citep[cf.][]{Sh09, KoXuYaZh16}. Consider the model
\begin{equation}
Y=Z^{T}\gamma + \int_{I} b(t)X(t) dt + \varepsilon,
\label{eq: partial linear model}
\end{equation}
where $\varepsilon$ is independent of $(Z,X)$ with mean zero and variance $\sigma^{2} \in (0,\infty)$, and $\gamma \in \R^{d}$ and $b \in L^{2}(I)$ are unknown parameters. 
We shall discuss how to modify our methodology to construct a confidence band for $b$ in the model (\ref{eq: partial linear model}). 
In the following discussion, we will assume that $\Ep (Z_{j}^{2}) < \infty$ for all $j=2,\dots,d$, $E( \| X \|^{2}) < \infty$, and the matrix $\Ep (ZZ^{T})$ is invertible. 

The idea here is to partial out the effect of $Z$ from $X$. To this end, consider $X^{c}(t) = X(t) - Z^{T}\Upsilon (t)$ with $\Upsilon (t) = \{ \Ep (ZZ^{T}) \}^{-1} \Ep \{ Z X(t) \}$, and observe that $Y = Z^{T} \gamma^{c} + \int_{I} b(t) X^{c}(t) dt + \varepsilon$
where $\gamma^{c} =\gamma+ \{ \Ep (ZZ^{T}) \}^{-1} \Ep (Z \langle b, X \rangle )$. Let $K$ denote the covariance function of $X^{c}$, namely, $K(s,t) = \Ep\{ X^{c}(s)X^{c}(t) \}$ for $s,t \in I$ (note that $E\{ X^{c}(t) \}=0$ since $Z_{1} \equiv 1$), 
and assume that the integral operator from $L^{2}(I)$ into itself with kernel $K$ is injective, so that $K$ admits the spectral expansion $K(s,t) = \sum_{j} \kappa_{j} \phi_{j}(s)\phi_{j}(t)$ where $\kappa_{1} \geq \kappa_{2} \geq \cdots > 0$ and $\{ \phi_{j} \}$ is an orthonormal basis of $L^{2}(I)$. Expanding $b$ and $X^{c}$ as $b= \sum_{j} b_{j} \phi_{j}$ and $X^{c} = \sum_{j} \xi_{j} \phi_{j}$ with $b_{j} = \langle b,\phi_{j} \rangle$ and $\xi_{j} = \langle X^{c},\phi_{j} \rangle$, we have 
\[
Y = Z^{T} \gamma^{c} + \sum_{j} b_{j} \xi_{j} + \varepsilon.
\]
Importantly, each $\xi_{j}$ and $Z$ are uncorrelated, namely, $\Ep (\xi_{j}Z) = 0$, so that we have $b_{j} = E(\xi_{j}Y)/\kappa_{j}$ as before. 

To estimate $b$, we shall first estimate $K$. Let $(Y_{1},Z_{1},X_{1}),\dots,(Y_{n},Z_{n},X_{n})$ be independent copies of $(Y,Z,X)$, and estimate $X_{i}^{c}(t)$ by $\hat{X}_{i}^{c}(t) = X_{i}(t) - Z_{i}^{T}\hat{\Upsilon}(t)$
with 
\[
\hat{\Upsilon}(t) =\left \{ n^{-1} \sum_{j=1}^{n} Z_{j}Z_{j}^{T} \right \}^{-1} \left \{ n^{-1} \sum_{j=1}^{n} Z_{j}X_{j}(t) \right \}.
\]
Now, we estimate $K$ by $\hat{K}(s,t) = n^{-1} \sum_{i=1}^{n} \hat{X}^{c}_{i}(s)\hat{X}_{i}^{c}(t)$ for $s,t \in I$, and let $\hat{K}(s,t) = \sum_{j} \hat{\kappa}_{j}\hat{\phi}_{j}(s)\hat{\phi}_{j}(t)$ be the spectral expansion of $\hat{K}$ where $\hat{\kappa}_{1} \geq \hat{\kappa}_{2} \geq \dots \geq 0$ and $\{ \hat{\phi}_{j} \}$ is an orthonormal basis of $L^{2}(I)$. 
Under this notation, the rest of the procedure is the same as before (replace $X_{i} - \overline{X}$ by $\hat{X}_{i}^{c}$). 
Namely, estimate each $b_{j}$ by $\hat{b}_{j} = n^{-1} \sum_{i=1}^{n} Y_{i}\hat{\xi}_{i,j}/\hat{\kappa}_{j}$ with $\hat{\xi}_{i,j} = \langle \hat{X}_{i}^{c},\hat{\phi}_{j} \rangle$, and estimate $b$ by $\hat{b} = \sum_{j=1}^{m_{n}} \hat{b}_{j} \hat{\phi}_{j}$. In construction of confidence bands, estimation of the error variance $\sigma^{2}$ is needed. We propose to estimate $\sigma^{2}$ by $\hat{\sigma}^{2} = n^{-1} \sum_{i=1}^{n} (Y_{i} - Z_{i}^{T} \hat{\gamma}^{c} - \langle \hat{b},\hat{X}_{i}^{c} \rangle )^{2}$,
where $\hat{\gamma}^{c} = \{ n^{-1} \sum_{i=1}^{n} Z_{i}Z_{i}^{T} \}^{-1} \{ n^{-1} \sum_{i=1}^{n} Z_{i}Y_{i}\}$. 

\section{Conclusion}
\label{sec: conclusion}

In the present paper, we have proposed a simple method to construct confidence bands, centered at a PCA-based estimator,  for the slope function in a functional linear regression model. The proposed confidence band is aimed at covering the slope function at ``most'' of points with a prespecified probability, and we have proved its asymptotic validity under suitable regularity conditions. 
Importantly, to the best of our knowledge, this is the first paper that derives confidence bands having theoretical justifications  for the PCA-based estimator. 
We have also proposed a practical method to choose the cut-off level. The numeral studies have shown that the proposed confidence band, combined with the proposed selection rule of the cut-off level, works well in practice.

\appendix 

\section{Proofs}
\label{sec: proof}

\subsection{Proof of Theorem \ref{thm: main}}

We first prove the following technical lemma, which is concerned with concentration and anti-concentration of a weighted sum of independent $\chi^{2}(1)$ random variables. 

\begin{lemma}
\label{lem: chi-square}
Let $\eta_{1},\dots,\eta_{m}$ be independent $\chi^{2}(1)$ random variables, and let $a_{1},\dots,a_{m}$ be nonnegative constants such that $\sigma_{a}^{2}=\sum_{j=1}^{m} a_{j}^{2} > 0$. 
\begin{enumerate}
\item[(i)] (Anti-concentration). For every $h > 0$, 
\[
\sup_{z > 0}\Pr \left (\left |\sum_{j=1}^{m} a_{j} \eta_{j} -z \right | \leq h \right) \leq 2\sqrt{2h/(\sigma_{a}\pi)},
\]
where $\sigma_{a}=\sqrt{\sigma_{a}^{2}}$. 
\item[(ii)] (Concentration). For every $c > 0$ and $r>0$, 
\[
\Pr \left \{ \sum_{j=1}^{m} a_{j}\eta_{j} \geq (1+c) \sum_{j=1}^{m}a_{j} +2 (1+c^{-1}) a_{\max} r \right \} \leq e^{-r},
\]
where $a_{\max} = \max_{1 \leq j \leq m} a_{j}$. 
\end{enumerate}
\end{lemma}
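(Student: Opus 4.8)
The plan is to treat the two parts by different tools: part (i) via a characteristic-function/density argument exploiting that a single $\chi^2(1)$ summand already smooths the distribution, and part (ii) via a standard Chernoff (exponential moment) bound tailored to weighted $\chi^2(1)$ variables.

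For part (ii), I would bound $\Pr\{\sum_j a_j\eta_j \geq z\}$ by $e^{-\lambda z}\,\Ep[e^{\lambda \sum_j a_j \eta_j}]$ for a suitable $\lambda > 0$. Since the $\eta_j$ are independent, the moment generating function factorizes, and for a single $\chi^2(1)$ variable one has $\Ep[e^{\lambda a_j \eta_j}] = (1-2\lambda a_j)^{-1/2}$ for $0 < \lambda < 1/(2a_j)$. Taking logarithms, I would use the elementary inequality $-\tfrac{1}{2}\log(1-2u) \leq u + \tfrac{u^2}{1-2u}$ (or the cruder $u/(1-2u)$-type bound), so that $\log \Ep[e^{\lambda\sum_j a_j\eta_j}] \leq \lambda\sum_j a_j + \lambda^2 \sum_j a_j^2/(1-2\lambda a_{\max})$. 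Inserting the threshold $z=(1+c)\sum_j a_j + 2(1+c^{-1})a_{\max}r$ and optimizing (or simply choosing) $\lambda$ proportional to $c/a_{\max}$ should deliver the stated $e^{-r}$ bound after routine algebra; this is essentially a Bernstein-type argument and I expect no serious difficulty here, only bookkeeping to match the exact constants $(1+c)$ and $2(1+c^{-1})$.

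Part (i) is the main obstacle. The quantity $\sup_{z>0}\Pr(|\sum_j a_j\eta_j - z| \leq h)$ is the largest probability that the weighted sum falls in a window of width $2h$, i.e. a L\'evy-concentration bound. The key idea is that $\sum_j a_j\eta_j$ has a bounded density because at least one weight is positive: writing $S = a_{j_0}\eta_{j_0} + R$ where $a_{j_0}>0$ is a fixed positive weight and $R$ is the independent remainder, the conditional density of $S$ given $R$ is a shifted scaling of the $\chi^2(1)$ density $f(x) = (2\pi x)^{-1/2}e^{-x/2}\mathbf{1}\{x>0\}$. Since $f$ is integrable and the window has width $2h$, one can bound $\Pr(|S-z|\leq h \mid R)$ by $\int$ of the $\chi^2(1)$ density over an interval of length $2h/a_{j_0}$; because $f$ has a square-root singularity at $0$ but is still locally integrable, $\int_I f \leq \sqrt{2h/(a_{j_0}\pi)}\cdot(\text{const})$ uniformly in the location of the interval $I$ of length $2h/a_{j_0}$. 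Choosing $j_0$ to be the index achieving $a_{\max}$, however, would give $\sigma_a$ replaced by $a_{\max}$, not $\sigma_a$; to obtain the stated $\sigma_a$ one must do better.

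To get the sharper $\sigma_a$ in the denominator, I would instead use the characteristic function. The density of $S=\sum_j a_j\eta_j$ is $p(x)=\tfrac{1}{2\pi}\int e^{-itx}\prod_j(1-2it a_j)^{-1/2}\,dt$, and $\Pr(|S-z|\leq h)\leq 2h\,\sup_x p(x)$ whenever $p$ is bounded. Bounding $|\prod_j(1-2ita_j)^{-1/2}| = \prod_j(1+4t^2a_j^2)^{-1/4}$ and estimating $\sup_x p(x) \leq \tfrac{1}{2\pi}\int \prod_j(1+4t^2a_j^2)^{-1/4}\,dt$ should, after using $\prod_j(1+4t^2a_j^2) \geq 1 + 4t^2\sigma_a^2$, reduce the integral to $\int(1+4t^2\sigma_a^2)^{-1/4}\,dt$; this integral diverges, so a more careful two-factor bound (keeping two exponents to ensure $L^1$ integrability) is needed, and reconciling the resulting constant with the clean factor $2\sqrt{2h/(\sigma_a\pi)}$ is where the real work lies. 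I anticipate this constant-tracking in part (i) to be the delicate step, and would expect the authors to use a self-contained real-variable argument (convolution smoothing by the single summand, combined with a variance lower bound) rather than the Fourier route if they want the exact constant displayed.
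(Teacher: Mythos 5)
Your plan for part (ii) is correct but takes a different route from the paper: you propose a Chernoff/Bernstein bound using the exact $\chi^{2}(1)$ moment generating function, which does deliver the stated inequality (via $\log \Ep e^{\lambda\sum_j a_j(\eta_j-1)} \leq \lambda^{2}\sum_j a_j^{2}/(1-2\lambda a_{\max})$, the Laurent--Massart threshold $\sum_j a_j + 2\sigma_a\sqrt{r}+2a_{\max}r$, and then $2\sigma_a\sqrt{r}\leq 2\sqrt{a_{\max}\sum_j a_j}\sqrt{r}\leq c\sum_j a_j + c^{-1}a_{\max}r$). The paper instead writes $\sum_j a_j\eta_j$ as $F^{2}(Z)$ with $F(Z)=\bigl(\sum_j a_j Z_j^{2}\bigr)^{1/2}$ for standard Gaussian $Z$, notes that $F$ is $\sqrt{a_{\max}}$-Lipschitz, and applies the Gaussian concentration inequality followed by the same $2xy\leq cx^{2}+c^{-1}y^{2}$ step. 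Both arguments are sound; yours is more elementary, the paper's is shorter.

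For part (i) there is a genuine gap: you correctly identify the two available tools (conditioning on a single summand, and a Fourier bound on the density of the full sum), you correctly diagnose why each one alone fails (the first yields $a_{\max}$ in place of $\sigma_a$; the naive Fourier bound $\int(1+4t^{2}\sigma_a^{2})^{-1/4}dt$ diverges), but you do not supply the idea that resolves the tension. That idea is a dichotomy after normalizing $\sigma_a=1$ (which is free by scaling). If $a_{\max}>1/2$, then $a_{\max}$ and $\sigma_a$ are comparable, so your single-summand conditioning argument already gives the stated constant: $\Pr(|V-z|\leq h)\leq \sup_{z'}\Pr(|\eta_{1}-z'|\leq h/a_{\max})\leq \sup_{z'}\Pr(|\eta_{1}-z'|\leq 2h)\leq 2\sqrt{2h/\pi}$, using the square-root singularity of the $\chi^{2}(1)$ density exactly as you describe. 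If instead $a_{\max}\leq 1/2$, the weights are spread out enough that the density of the whole sum is bounded by $1$ (this is Lemma 7.2 of Xu, Zhang and Wu (2014), which the paper cites rather than reproves; it is where the careful multi-factor Fourier estimate you anticipate actually lives), giving $\Pr(|V-z|\leq h)\leq 2h$. The two branches combine because for $h\leq 1/2$ one has $h\leq\sqrt{2h/\pi}$, while for $h>1/2$ the claimed bound exceeds $1$ and is vacuous. Without this case split, your proposal does not reach the stated inequality with $\sigma_a$ in the denominator.
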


\begin{proof}
Part (i)  follows from Lemma 7.2 in \cite{XuZhWu14}, and Part (ii) is derived from the Gaussian concentration inequality. For the sake of completeness, we provide their proofs.

Part (i). Since $\sup_{z > 0} \Pr( |\sum_{j=1}^{m} a_{j} \eta_{j} -z  | \leq h ) = \sup_{z > 0} \Pr (|\sum_{j=1}^{m} (a_{j}/\sigma_{a}) \eta_{j} - z | \leq h/\sigma_{a})$ for every $h > 0$, it suffices to prove the desired inequality when $\sigma_{a}=1$. Furthermore, without loss of generality, we may assume that $a_{1} = \max_{1 \leq j \leq m}a_{j}$. Let $V=\sum_{j=1}^{m} a_{j}\eta_{j}$. If $a_{1} \leq 1/2$, then from the proof of Lemma 7.2 in \cite{XuZhWu14}, the density of $V$ is bounded by $1$, so that $\Pr (|V-z| \leq h) \leq 2 h$. Consider the case where $a_{1} > 1/2$, and let $V_{-1} = \sum_{j=2}^{m} a_{j} \eta_{j}$ (if $m=1$, then let $V_{-1}=0$). Since $\eta_{1}$ and $V_{-1}$ are independent, we have for every $z > 0$ and $h >0$, 
\begin{align*}
&\Pr (|V-z| \leq h) = \Pr (| \eta_{1} - (z-V_{-1})/a_{1} | \leq h/a_{1}) \leq \Pr (| \eta_{1} - (z-V_{-1})/a_{1} | \leq 2 h) \\
&\quad = \Ep [ \Pr  (| \eta_{1} - (z-V_{-1})/a_{1} | \leq 2 h \mid V_{-1}) ]  \leq \sup_{z' \in \R} \Pr (| \eta_{1} - z'| \leq 2h). 
\end{align*}
Pick any $z' \in \R$. Suppose first that $z'-2h > 0$. Since $\eta_{1} \sim \chi^{2}(1)$, we have that 
\begin{align*}
&\Pr (| \eta_{1} - z'| \leq 2h) =\sqrt{1/(2\pi)} \int_{z'-2h}^{z'+2h} w^{-1/2} e^{-w/2} dw \leq \sqrt{1/(2\pi)} \int_{z'-2h}^{z'+2h} w^{-1/2} dw \\
&\quad = \sqrt{2/\pi} (\sqrt{z'+2h} - \sqrt{z'-2h}) \leq 2\sqrt{2h/\pi}. 
\end{align*}
On the other hand, if $z' - 2h \leq 0$, then $\Pr (| \eta_{1} - z'| \leq 2h) \leq \Pr (\eta_{1} \leq 4h) \leq 2\sqrt{2h/\pi}$. 

Therefore, in either case of $a_{1} \leq 1/2$ or $a_{1} > 1/2$, we have $\sup_{z > 0}\Pr (|V-z| \leq h) \leq 2 \max \{ h, \sqrt{2h/\pi} \}$ for every $h > 0$. This inequality is meaningful only if $ h \leq 1/2$ since otherwise the upper bound is larger than  $1$, but if $0<h \leq 1/2$, then $h \leq \sqrt{2h/\pi}$. This completes the proof of Part (i).

Part (ii).  Let $Z=(Z_{1},\dots,Z_{m})^{T}$ be a standard normal random vector in $\R^{m}$, and let $F(Z) = \sqrt{\sum_{j=1}^{m} a_{j}Z_{j}^{2}}$. Then $F$ is Lipschitz continuous with Lipschitz constant bounded by $\sqrt{a_{\max}}$, and $\Ep \{ F(Z) \} \leq \sqrt{\Ep \{ F^{2}(Z) \}} = \sqrt{\sum_{j=1}^{m}a_{j}}$. 
The Gaussian concentration inequality \citep[cf.][Theorem 5.6]{BoLuMa13} then yields that 
\[
\Pr \left \{ F(X) \geq \sqrt{\sum_{j=1}^{m}a_{j}} + \sqrt{a_{\max}} r \right \} \leq e^{-r^{2}/2}
\]
for every $r > 0$. The desired conclusion follows from the fact that $F^{2}(Z)$ has the same distribution as $\sum_{j=1}^{m} a_{j}\eta_{j}$, and the simple inequality $2xy \leq c x^{2}+c^{-1} y^{2}$ for any $x,y \in \R$ and $c > 0$. 
\end{proof}

\begin{proof}[Proof of Theorem \ref{thm: main}]
We will use the following notation. Let $\Pr_{\varepsilon}$ and $\Ep_{\varepsilon}$ denote the probability and expectation with respect to $\varepsilon_{i}$'s only. The notation $\lesssim$ signifies that the left hand side is bounded by the right hand side up to a constant that depends only on $\alpha,\beta$, and $C_{1}$.
We first note that $\hat{b}$ is invariant with respect to choices of signs of $\hat{\phi}_{j}$'s, and so without loss of generality, we may assume that $\int_{I} \hat{\phi}_{j}(t) \phi_{j}(t) dt \geq 0$  for all $j =1,2,\dots$.
Lemma 4.2 in \cite{Bo00} yields that  $\sup_{j \geq 1} | \hat{\kappa}_{j} - \kappa_{j} | \leq \hat{\Delta} := ||| \hat{K} - K |||$. Since 
$\Ep \{ \| X - \Ep (X) \|^{4} \} = \Ep \{ (\sum_{j=1}^{\infty} \xi_{j}^{2} )^{2} \} = \sum_{j,k} \Ep (\xi_{j}^{2} \xi_{k}^{2}) \leq \sum_{j,k} \sqrt{\Ep(\xi_{j}^{4})} \sqrt{\Ep( \xi_{k}^{4} )} \lesssim (\sum_{j} \kappa_{j})^{2} \lesssim 1$ (which follows from the assumption that $\Ep ( \xi_{j}^{4}) \lesssim \kappa_{j}^{2}$), we have that $\hat{\Delta} = O_{\Pr}(n^{-1/2})$. 
Observe that for $1 \leq j \leq m_{n}$, $| \hat{\kappa}_{j}/\kappa_{j} - 1 | \lesssim j^{\alpha} | \hat{\kappa}_{j} - \kappa_{j} | \leq m_{n}^{\alpha} \hat{\Delta} = o_{\Pr}(1)$,
from which we have $\max_{1 \leq j \leq m_{n}} | \hat{\kappa}_{j}/\kappa_{j}- 1 | = o_{\Pr}(1)$.
Furthermore, observe that, whenever $1 \leq j \leq m_{n}$ and $j \neq k$, $| \kappa_{j} - \kappa_{k} | \geq \min \{ \kappa_{j-1} - \kappa_{j}, \kappa_{j}-\kappa_{j+1} \} \geq C_{1}^{-1} j^{-\alpha-1} \geq C_{1}^{-1}m_{n}^{-\alpha-1}$, and since $n^{-1/2} = o(m_{n}^{-\alpha-1})$, we have that $\Pr \{ | \hat{\kappa}_{j} - \kappa_{k} | \geq | \kappa_{j} - \kappa_{k} |/\sqrt{2}, \ 1 \leq \forall j \leq m_{n}, \forall k \neq j \} \to 1$. Now, following the arguments used in \citet[][p.83-84]{HaHo07}, we have that with probability approaching one, 
\begin{align*}
&(1-Cm_{n}^{2\alpha+2}\hat{\Delta}_{n}^{2}) \| \hat{\phi}_{j}- \phi_{j} \|^{2} \\
&\quad \leq 8 \underbrace{\sum_{k: k \neq j} (\kappa_{j}-\kappa_{k})^{-2} \left [ \int \{ \hat{K} (s,t)- K(s,t) \} \phi_{j}(s) \phi_{k} (t) ds dt \right ]^{2}}_{=\hat{u}_{j}^{2}}, \ 1 \leq \forall j \leq m_{n},
\end{align*}
where $C$ is a constant that depends only on $C_{1}$, and $\Ep (\hat{u}_{j}^{2}) \lesssim j^{2}/n$. Since $m_{n}^{2\alpha+2} \hat{\Delta}^{2} = o_{\Pr}(1)$, we conclude that 
\begin{equation}
\| \hat{\phi}_{j} - \phi_{j} \|^{2} \leq 8 \{1+o_{\Pr}(1) \} \hat{u}_{j}^{2} \quad \text{and} \quad \Ep( \hat{u}_{j}^{2} ) \lesssim j^{2}/n,
\label{eq: HH}
\end{equation}
where $o_{\Pr}(1)$ is uniform in $1 \leq j \leq m_{n}$. 
We divide the rest of the proof into several steps. 

\textbf{Step 1}. In this step, we shall verify the expansion (\ref{eq: expansion}).
Since $\{ \hat{\phi}_{j} \}_{j=1}^{\infty}$ is an orthonormal basis of $L^{2}(I)$, expand $b$ as $b = \sum_{j} \breve{b}_{j} \hat{\phi}_{j}$ with $\breve{b}_{j} = \int_{I}b(t) \hat{\phi}_{j}(t)dt$.
Arguing as in the proof of Theorem 1 in \cite{ImKa16}, we have that $\hat{b}_{j}  = \breve{b}_{j} + n^{-1} \sum_{i=1}^{n} \varepsilon_{i} \hat{\xi}_{i,j}/\hat{\kappa}_{j}$ and $\sum_{j=1}^{m_{n}} (\breve{b}_{j}-b_{j})^{2} = O_{\Pr}(n^{-1})$. 
Now, observe that 
\begin{align*}
\hat{b} - b &= \sum_{j=1}^{m_{n}} \left ( n^{-1} \sum_{i=1}^{n} \varepsilon_{i} \hat{\xi}_{i,j}/\hat{\kappa}_{j} \right )\hat{\phi}_{j} + \sum_{j=1}^{m_{n}} (\breve{b}_{j}-b_{j}) \hat{\phi}_{j} + \sum_{j=1}^{m_{n}} b_{j} (\hat{\phi}_{j}-\phi_{j}) + \sum_{j > m_{n}} b_{j}\phi_{j} \\
&=:I_{n} + II_{n}+III_{n}+IV_{n}.
\end{align*}
Since 
\begin{align*}
\Ep_{\varepsilon}(\| I_{n} \|^{2}) &= \sum_{j=1}^{m_{n}} \Ep_{\varepsilon} \left \{ \left (n^{-1} \sum_{i=1}^{n} \varepsilon_{i} \hat{\xi}_{i,j}/\hat{\kappa}_{j} \right )^{2}\right \} = (\sigma^{2}/n)\sum_{j=1}^{m_{n}} \left (n^{-1} \sum_{i=1}^{n} \hat{\xi}_{i,j}^{2}/\hat{\kappa}_{j}^{2} \right) \\
&= (\sigma^{2}/n) \sum_{j=1}^{m_{n}} \hat{\kappa}_{j}^{-1} = O_{\Pr} \left (n^{-1} \sum_{j=1}^{m_{n}} \kappa_{j}^{-1} \right)= O_{\Pr}(m_{n}^{\alpha+1}/n),
\end{align*}
we have that $\| I_{n} \|^{2} = O_{\Pr} (m_{n}^{\alpha+1}/n)$. Furthermore, observe that 
\begin{align*}
&\| II_{n} \|^{2} = \sum_{j=1}^{m_{n}} (\breve{b}_{j}-b_{j})^{2}=O_{\Pr}(n^{-1}), \ \| IV_{n} \|^{2} \lesssim \sum_{j > m_{n}} j^{-2\beta} = O(m_n^{-2\beta+1}), \ \text{and} \\
&\| III_{n} \|^{2} \lesssim m_{n} \sum_{j=1}^{m_{n}} j^{-2\beta} \| \hat{\phi}_{j}-\phi_{j} \|^{2} = O_{\Pr} \left \{ (m_{n}/n) \sum_{j=1}^{m_n} j^{-2\beta+2} \right \} = O_{\Pr}(m_n/n). 
\end{align*}
Therefore, we have 
\begin{align*}
\| \hat{b} - b \|^{2} &= \| I_{n} \|^{2} + 2 \langle I_{n},II_{n}+III_{n}+IV_{n} \rangle + \| II_{n}+III_{n}+IV_{n} \|^{2} \\
&= \| I_{n} \|^{2} + O_{\Pr} (m_n^{\alpha/2+1}/n + n^{-1/2} m_n^{-\beta+\alpha/2+1} + m_n^{-2\beta+1}). 
\end{align*}
This leads to the expansion (\ref{eq: expansion}). 

\textbf{Step 2}. In this step, we shall show that for any fixed $\tau \in (0,1)$,
\[
\Pr \{ n\| \hat{b} - b \|^{2} \leq \sigma^{2} \hat{c}_{n}^{2}(1-\tau)  \} = 1-\tau +o(1).
\]
Define $R_{n} = n ( \| \hat{b} - b \|^{2}  - \| I_{n} \|^{2})$, and observe that $R_{n} = o_{\Pr} (m_{n}^{\alpha+1/2})$. So there exists a sequence of constants $\delta_{n} \downarrow 0$ such that $\Pr ( | R_{n} | > \delta_{n} m_{n}^{\alpha+1/2} ) \to 0$. 
Now, observe that 
\begin{align*}
&\Pr_{\varepsilon} \left \{ n\| \hat{b} - b \|^{2} \leq \sigma^{2} \hat{c}_{n}^{2}(1-\tau) \right \} \\
&\quad \leq \Pr_{\varepsilon} \left \{ n \| I_{n} \|^{2} \leq \sigma^{2} \hat{c}_{n}^{2}(1-\tau) + \delta_{n} m_{n}^{\alpha+1/2} \right \} + \Pr_{\varepsilon} ( | R_{n} | > \delta_{n} m_{n}^{\alpha+1/2} ),
\end{align*}
and conditionally on $X_{1}^{n}$, $n\| I_{n} \|^{2}$ has the same distribution as $\sigma^{2}\sum_{j=1}^{m_{n}} \eta_{j}/\hat{\kappa}_{j}$, where $\eta_{1},\dots,\eta_{m_{n}}$ are independent $\chi^{2}(1)$ random variables independent of $X_{1}^{n}$. 
Lemma \ref{lem: chi-square} (i) then yields that 
\[
\Pr_{\varepsilon} \left \{ n \| I_{n} \|^{2} \leq \sigma^{2} \hat{c}_{n}^{2}(1-\tau)  + \delta_{n} m_{n}^{\alpha+1/2} \right \} - (1-\tau) \lesssim \left \{ \frac{\delta_{n}m_{n}^{\alpha+1/2}}{(\sum_{j=1}^{m_{n}}\hat{\kappa}_{j}^{-2})^{1/2}} \right \}^{1/2}. 
\]
Since $\sum_{j=1}^{m_{n}}\hat{\kappa}_{j}^{-2} \geq \{ 1-o_{\Pr} (1) \} \sum_{j=1}^{m_{n}} \kappa_{j}^{-2} \gtrsim \{ 1-o_{\Pr}(1) \} m_{n}^{2\alpha+1}$,
the right hand side on the above displayed equation is $o_{\Pr}(1)$. 
This yields that $\Pr_{\varepsilon}  \{ n\| \hat{b} - b \|^{2} \leq \sigma^{2} \hat{c}_{n}^{2}(1-\tau) \} \leq 1-\tau + o_{\Pr}(1)$. Likewise, we have $\Pr_{\varepsilon}  \{ n\| \hat{b} - b \|^{2} \leq \sigma^{2} \hat{c}_{n}^{2}(1-\tau) \} \geq 1-\tau - o_{\Pr}(1)$, so that 
\[
\Pr_{\varepsilon}  \left \{ n\| \hat{b} - b \|^{2} \leq \sigma^{2} \hat{c}_{n}^{2}(1-\tau) \right \} = 1-\tau+ o_{\Pr}(1).
\]
Finally, Fubini's theorem and the dominated convergence theorem yield that $\Pr \{ n\| \hat{b} - b \|^{2} \leq \sigma^{2} \hat{c}_{n}^{2}(1-\tau)  \} = 1-\tau +o(1)$. 

\textbf{Step 3}. In this step, we shall show that $\hat{\sigma}^{2}=\sigma^{2}+ o_{\Pr} (m_{n}^{-1/2})$. 
Observe that 
\[
Y_{i} - \overline{Y} - \sum_{j=1}^{m_{n}} \hat{b}_{j}\hat{\xi}_{i,j}= \int_{I} \{ X_{i}(t) - \overline{X}(t) \} \{ b(t) - \hat{b}(t) \} dt + \varepsilon_{i} - \overline{\varepsilon},
\]
where $\overline{\varepsilon} = n^{-1} \sum_{i=1}^{n} \varepsilon_{i}$, so that
\begin{align*}
\hat{\sigma}^{2} &= n^{-1} \sum_{i=1}^{n} ( \varepsilon_{i}-\overline{\varepsilon})^{2} + 2 \int_{I} \left [ n^{-1} \sum_{i=1}^{n} ( \varepsilon_{i}-\overline{\varepsilon}) \{ X_{i}(t)-\overline{X}(t) \} \right ] \{ b(t) - \hat{b}(t) \} dt  \\
&\quad + n^{-1} \sum_{i=1}^{n}\left [ \int_{I} \{ X_{i}(t) - \overline{X}(t) \} \{ b(t) - \hat{b}(t) \} dt \right ]^{2}
\end{align*}
From Step 1, it is seen that $\| \hat{b} - b \|^{2} = O_{\Pr}(m_n^{\alpha+1}/n)$, so that by the Cauchy-Schwarz inequality, the second and third terms on the right hand side are $O_{\Pr}(m_n^{\alpha/2+1/2}/n)$ and $O_{\Pr}(m_n^{\alpha+1}/n)$, respectively. 
Furthermore, $n^{-1} \sum_{i=1}^{n} (\varepsilon_{i} - \overline{\varepsilon})^{2} = \sigma^{2} +O_{\Pr}(n^{-1/2})$. The conclusion of this step follows from the fact that $n^{-1/2}+m_{n}^{\alpha+1}/n = o(m_{n}^{-1/2})$. 

\textbf{Step 4}. In this step, we shall show that for any fixed $\tau \in (0,1)$, 
\begin{equation}
\Pr \{ n\| \hat{b} - b \|^{2} \leq \hat{\sigma}^{2}\hat{c}_{n}^{2}(1-\tau) \} = 1-\tau+o(1).
\label{eq: validity of confidence ball}
\end{equation}
By Lemma \ref{lem: chi-square} (ii), we have $\hat{c}_{n}^{2}(1-\tau) \lesssim \sum_{j=1}^{m_{n}} \hat{\kappa}_{j}^{-1} + \hat{\kappa}_{m_{n}}^{-1} \log (1/\tau) = O_{\Pr} (m_{n}^{\alpha+1})$, so that 
\[
\hat{\sigma}^{2} \hat{c}_{n}^{2}(1-\tau) = \sigma^{2} \hat{c}_{n}^{2}(1-\tau) + (\hat{\sigma}^{2}-\sigma^{2})\hat{c}_{n}^{2}(1-\tau) = \sigma^{2} \hat{c}_{n}^{2}(1-\tau) + o_{\Pr} (m_{n}^{\alpha+1/2}).
\]
Hence, arguing as in Step 2, we obtain the result (\ref{eq: validity of confidence ball}). 

In view of the discussion in Section \ref{subsec: construction of CB}, the result (\ref{eq: main result}) follows directly from (\ref{eq: validity of confidence ball}). 
Finally, the width of the band $\hat{\mathcal{C}}$ is $\lesssim \hat{\sigma} \hat{c}_{n}(1-\tau_{1})/\sqrt{n}=O_{\Pr} (\sqrt{m_{n}^{\alpha+1}/n})$. This completes the proof.
\end{proof}

\subsection{Proof of Theorem \ref{thm: main2}}

The proof of Theorem \ref{thm: main2} relies on the following multi-dimensional version of the Berry-Esseen bound, due to \cite{Be05}. Let $\| \cdot \|_{2}$ denote the standard Euclidean norm. 

\begin{theorem}[\cite{Be05}]
\label{thm: Bentkus}
Let $W_{1},\dots,W_{n}$ be independent random vectors in $\R^{m}$ with mean zero, and suppose that the covariance matrix $\Sigma$ of $S_{n}=\sum_{i=1}^{n}W_{i}$ is invertible. Then there exists a universal constant $c > 0$ such that 
\[
\sup_{A \in \mathcal{C}} | \Pr (S_{n} \in A) - \gamma_{\Sigma}(A) | \leq c m^{1/4} \sum_{i=1}^{n} \Ep ( \| \Sigma^{-1/2} W_{i} \|_{2}^{3} ),
\]
where $\mathcal{C}$ is the class of all Borel measurable convex sets in $\R^{m}$, and $\gamma_{\Sigma} = N(0,\Sigma)$.
\end{theorem}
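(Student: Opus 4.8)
The plan is to reduce to the standardized case and then run a smoothing-plus-Lindeberg argument in which convexity enters only through a sharp bound on the Gaussian surface area of convex bodies. First I would set $V_i = \Sigma^{-1/2}W_i$, so that $\sum_{i=1}^n V_i$ has covariance $I_m$ and the target reads $\sup_{A \in \mathcal{C}} |\Pr(\sum_i V_i \in A) - \gamma(A)| \lesssim m^{1/4}\sum_i \Ep\|V_i\|_2^3$ with $\gamma = N(0,I_m)$; since $\mathcal{C}$ is invariant under the linear map $x \mapsto \Sigma^{-1/2}x$, no generality is lost. Let $G = \sum_i G_i$ with $G_i$ independent centered Gaussians whose covariance matches that of $V_i$, so that $G \sim \gamma$ and the first two moments of $V_i$ and $G_i$ agree.

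Next, for a fixed convex $A$ and a scale $\delta > 0$, I would mollify the indicator $\mathbf{1}_A$ to a smooth $f = f_{A,\delta}$ satisfying $\mathbf{1}_A \le f \le \mathbf{1}_{A^\delta}$ (with $A^\delta$ the $\delta$-dilation), $\|D^k f\|_\infty \lesssim \delta^{-k}$ for $k \le 3$, and $D^3 f$ supported in the tube $\{x : \mathrm{dist}(x,\partial A) \lesssim \delta\}$. This gives the upper estimate $\Pr(\sum_i V_i \in A) - \gamma(A) \le [\Ep f(\sum_i V_i) - \Ep f(G)] + [\gamma(A^\delta) - \gamma(A)]$, and a matching lower estimate via the $\delta$-erosion, so it suffices to control a Lindeberg comparison term and a Gaussian boundary term.

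Then I would estimate both pieces. The Lindeberg telescoping writes $\Ep f(\sum_i V_i) - \Ep f(G) = \sum_i \{\Ep f(R_i + V_i) - \Ep f(R_i + G_i)\}$, where $R_i = \sum_{j<i} V_j + \sum_{j>i} G_j$ is independent of $(V_i,G_i)$; Taylor-expanding to third order and cancelling the matched first and second moments leaves remainders governed by $D^3 f$ paired with $\Ep\|V_i\|_2^3$ and $\Ep\|G_i\|_2^3$. Crucially, $\mathrm{Cov}(R_i) = I_m - \mathrm{Cov}(V_i) \preceq I_m$, so $R_i$ is essentially standard Gaussian and one bounds the remainders by integrating $|D^3 f|$ against a near-Gaussian density; since $D^3 f$ is concentrated in the $\delta$-tube about $\partial A$, this integral is controlled by the Gaussian measure of that tube. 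The boundary term $\gamma(A^\delta) - \gamma(A)$ is the Gaussian measure of the same kind of $\delta$-tube.

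The crux — and the step I expect to be the main obstacle — is the geometric estimate that, \emph{uniformly over all convex} $A$, the standard Gaussian measure of a $\delta$-tube about $\partial A$ is $\lesssim m^{1/4}\delta$; equivalently, the Gaussian surface area of any convex body in $\R^m$ is $\lesssim m^{1/4}$. This sharp dimension dependence (Ball; Nazarov) is precisely where convexity is genuinely exploited and is the origin of the $m^{1/4}$ factor. Feeding it into both pieces and choosing $\delta$ carefully — with a refined, possibly recursive, smoothing scheme that keeps the negative powers of $\delta$ from degrading the rate — produces the bound \emph{linear} in $\sum_i \Ep\|V_i\|_2^3$. Taking the supremum over $A \in \mathcal{C}$ and translating back to the original $W_i$ and $\Sigma$ then yields the stated inequality.
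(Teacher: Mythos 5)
First, a point of reference: the paper does not prove this statement at all — it is quoted as an external result, attributed to Bentkus (2005), and used as a black box in the proof of Theorem \ref{thm: main2}. So your sketch must be measured against Bentkus's actual proof, and there it falls short. You assemble the right ingredients (standardization $V_i=\Sigma^{-1/2}W_i$, mollification of indicators of convex sets, a Lindeberg swap, and Ball's bound $\lesssim m^{1/4}$ on the Gaussian surface area of convex bodies as the source of the dimension factor), but the gap sits exactly where you write ``choosing $\delta$ carefully --- with a refined, possibly recursive, smoothing scheme.'' That clause \emph{is} the theorem. Write $\beta=\sum_i \Ep\|V_i\|_2^3$. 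With a mollifier satisfying $\|D^3f\|_\infty\lesssim\delta^{-3}$ and boundary term $\gamma(A^\delta\setminus A)\lesssim m^{1/4}\delta$, the plain Lindeberg estimate gives an error $\lesssim \delta^{-3}\beta+m^{1/4}\delta$, which optimizes in $\delta$ to order $m^{3/16}\beta^{1/4}$; even granting your localization of $D^3f$ to the $\delta$-tube and the claim that $R_i$ charges that tube with probability $\lesssim m^{1/4}\delta$, the error becomes $\lesssim m^{1/4}\delta^{-2}\beta+m^{1/4}\delta$, which optimizes to order $m^{1/4}\beta^{1/3}$. Both are sub-linear in $\beta$, hence strictly weaker than the claimed bound in the only regime where it has content ($m^{1/4}\beta<1$). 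No single choice of $\delta$ in a one-pass smoothing argument produces linearity in $\beta$.

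Second, the localization step is circular as stated. The vector $R_i$ contains the non-Gaussian summands $V_j$, $j<i$, so the assertion that $\Pr\{R_i+v\in\{x:\mathrm{dist}(x,\partial A)\le\delta\}\}\lesssim m^{1/4}\delta$ uniformly in $v$ is itself an anti-concentration bound for a partial sum over a difference of convex sets --- a statement of precisely the kind being proved. Knowing $\Cov(R_i)\preceq I_m$ does not make $R_i$ ``essentially standard Gaussian'' in the needed sense. Bentkus closes both gaps at once by a Bergstr\"{o}m-type induction on the number of summands: the inductive hypothesis (the theorem for fewer vectors) supplies the tube bounds for $R_i$, combined with truncation of large $\|V_i\|$ and a carefully engineered, summand-dependent smoothing, and the recursion is arranged so that the final bound comes out linear in $\beta$. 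Without actually carrying out that induction, your argument proves a bound of order $m^{1/4}\beta^{1/3}$, not the stated $m^{1/4}\beta$.
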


We will also use the following well-known inequality.

\begin{lemma}
\label{lem: moment inequality}
Let $\zeta_{1},\dots,\zeta_{n}$ be random variables such that $\Ep (|\zeta_{i}|^{r}) < \infty$ for all $i=1,\dots,n$ for some $r \geq 1$. Then $\Ep\left ( \max_{1 \leq i \leq n} | \zeta_{i} | \right ) \leq n^{1/r} \max_{1 \leq i \leq n} \{ \Ep (|\zeta_{i}|^{r}) \}^{1/r}$.
\end{lemma}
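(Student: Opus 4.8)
The plan is to combine Jensen's inequality with a crude bound of the maximum by the sum. First I would invoke the convexity of the map $x \mapsto x^{r}$ on $[0,\infty)$, which holds precisely because $r \geq 1$. Applying Jensen's inequality to the nonnegative random variable $\max_{1 \leq i \leq n}|\zeta_{i}|$ gives
\[
\left\{ \Ep\left( \max_{1 \leq i \leq n}|\zeta_{i}| \right) \right\}^{r} \leq \Ep\left\{ \left( \max_{1 \leq i \leq n}|\zeta_{i}| \right)^{r} \right\}.
\]

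Next I would observe that, since $x \mapsto x^{r}$ is nondecreasing on $[0,\infty)$, the $r$-th power commutes with the maximum, so that $\left( \max_{i}|\zeta_{i}| \right)^{r} = \max_{i}|\zeta_{i}|^{r}$, and then bound this maximum crudely by the sum $\sum_{i=1}^{n}|\zeta_{i}|^{r}$. Taking expectations and using linearity yields
\[
\Ep\left\{ \left( \max_{1 \leq i \leq n}|\zeta_{i}| \right)^{r} \right\} \leq \sum_{i=1}^{n} \Ep(|\zeta_{i}|^{r}) \leq n \max_{1 \leq i \leq n} \Ep(|\zeta_{i}|^{r}).
\]

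Finally I would take $r$-th roots of both ends of the resulting chain of inequalities, using once more the monotonicity of $x \mapsto x^{1/r}$ to move the root inside the maximum, i.e. $\{ n \max_{i}\Ep(|\zeta_{i}|^{r}) \}^{1/r} = n^{1/r} \max_{i} \{ \Ep(|\zeta_{i}|^{r}) \}^{1/r}$. This produces the claimed bound. There is no genuine obstacle here: the only place the hypothesis $r \geq 1$ enters is the convexity required for Jensen's inequality, and the assumed finiteness of the moments $\Ep(|\zeta_{i}|^{r})$ ensures every quantity above is finite and the manipulations are legitimate.
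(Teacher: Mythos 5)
Your proof is correct and is essentially identical to the paper's: both apply Jensen's (Lyapunov's) inequality to pass to the $r$-th moment, bound the maximum of $|\zeta_i|^r$ by the sum, and then by $n$ times the largest expectation before taking $r$-th roots. No further comment is needed.
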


This inequality follows from the observation that 
\[
\Ep(\max_{1 \leq i \leq n} | \zeta_{i} |) \leq \{ \Ep( \max_{1 \leq i \leq n} | \zeta_{i} |^{r}  ) \}^{1/r} \leq \left \{ \sum_{i=1}^{n} \Ep( |\zeta_{i}|^{r}) \right \}^{1/r} \leq n^{1/r} \max_{1 \leq i \leq n} \{ \Ep (|\zeta_{i}|^{r}) \}^{1/r}.
\]
We are now in position to prove Theorem \ref{thm: main2}. 

\begin{proof}[Proof of Theorem \ref{thm: main2}]
We follow the notation used in the proof of Theorem \ref{thm: main}. In view of the proof of Theorem \ref{thm: main}, it suffices to show that 
\[
\sup_{z > 0} \left | \Pr_{\varepsilon} (n\| I_{n} \|^{2}/\sigma^{2} \leq z ) - \Pr_{\eta} \left (  \sum_{j=1}^{m_{n}} \eta_{j}/\hat{\kappa}_{j} \leq z \right ) \right | \stackrel{\Pr}{\to} 0,
\]
where $\Pr_{\eta}$ denotes the probability with respect to $\eta_{j}$'s only. To this end, let 
\[
W_{i} = \{ \varepsilon_{i}\hat{\xi}_{i,j}/(\sigma \sqrt{n}\hat{\kappa}_{j}) \}_{j=1}^{m_{n}}, \ i=1,\dots,n.
\]
Observe that the covariance matrix of $\sum_{i=1}^{n}W_{i}$ conditionally on $X_{1}^{n}$ is $\Lambda_{n}= \mathrm{diag} (1/\hat{\kappa}_{1},\dots,1/\hat{\kappa}_{m_{n}})$, and $n\| I_{n} \|^{2}/\sigma^{2} = \| \sum_{i=1}^{n} W_{i} \|_{2}^{2}$. 
For $z > 0$, let $B_{z} = \{ w \in \R^{m_{n}} : \| w \|_{2}^{2} \leq z \}$, and observe that $\Pr_{\eta}  (  \sum_{j=1}^{m_{n}} \eta_{j}/\hat{\kappa}_{j} \leq z  ) = \gamma_{\Lambda_{n}} (B_{z})$.
Therefore, the problem reduces to proving that 
\[
\sup_{z > 0} \left | \Pr_{\varepsilon} \left ( \sum_{i=1}^{n} W_{i} \in B_{z} \right ) - \gamma_{\Lambda_{n}} (B_{z}) \right | \stackrel{\Pr}{\to} 0,
\]
but in view of Theorem \ref{thm: Bentkus}, the left hand side is $\lesssim m_{n}^{1/4} \sum_{i=1}^{n} \Ep_{\varepsilon} ( \| \Lambda_{n}^{-1/2} W_{i} \|_{2}^{3} )$. Observe that 
\begin{align*}
\sum_{i=1}^{n} \Ep_{\varepsilon} ( \| \Lambda_{n}^{-1/2} W_{i} \|_{2}^{3} ) &= \Ep (|\varepsilon/\sigma|^{3}) n^{-3/2} \sum_{i=1}^{n}\left ( \sum_{j=1}^{m_{n}} \hat{\xi}_{i,j}^{2}/\hat{\kappa}_{j} \right )^{3/2} \\
&\leq O(m_{n}n^{-1/2}) \max_{1 \leq i \leq n} \left ( \sum_{j=1}^{m_{n}} \hat{\xi}_{i,j}^{2}/\hat{\kappa}_{j} \right )^{1/2} \\
&\leq O_{\Pr} (m_{n}n^{-1/2}) \max_{1 \leq i \leq n} \left ( \sum_{j=1}^{m_{n}} \hat{\xi}_{i,j}^{2}/\kappa_{j} \right )^{1/2}.
\end{align*}
We have to bound $\max_{1 \leq i \leq n}\sum_{j=1}^{m_{n}} \hat{\xi}_{i,j}^{2}/\kappa_{j}$, to which end it is without loss of generality to assume that $\Ep\{ X(t) \}=0$ for all $t \in I$. 
Let $\xi_{i,j} = \int_{I}X_{i}(t) \phi_{j}(t) dt$, and observe that 
\[
\hat{\xi}_{i,j} = \int_{I} \{ X_{i}(t)-\overline{X}(t) \} \hat{\phi}_{j}(t) dt = \xi_{i,j} + \int_{I} X_{i}(t) \{ \hat{\phi}_{j} (t) - \phi_{j}(t) \} dt - \int_{I} \overline{X}(t) \hat{\phi}_{j}(t) dt.
\]
From this decomposition, we have 
\begin{align*}
&\max_{1 \leq i \leq n} \sum_{j=1}^{m_{n}} \hat{\xi}_{i,j}^{2}/\kappa_{j} \lesssim \max_{1 \leq i \leq n} \sum_{j=1}^{m_{n}} \xi_{i,j}^{2}/\kappa_{j} + \left ( \max_{1 \leq i \leq n} \| X_{i} \|^{2} \right )\sum_{j=1}^{m_{n}} \kappa_{j}^{-1} \| \hat{\phi}_{j}-\phi_{j}\|^{2} + \| \overline{X} \|^{2} \sum_{j=1}^{m_{n}} \kappa_{j}^{-1} \\
&\quad = \max_{1 \leq i \leq n} \sum_{j=1}^{m_{n}} \xi_{i,j}^{2}/\kappa_{j}  +  \left ( \max_{1 \leq i \leq n} \| X_{i} \|^{2} \right ) O_{\Pr} \left ( \sum_{j=1}^{m_{n}} j^{\alpha+2}/n \right ) + O_{\Pr} (m_{n}^{\alpha+1}/n) \\
&\quad = \max_{1 \leq i \leq n} \sum_{j=1}^{m_{n}} \xi_{i,j}^{2}/\kappa_{j}  +  \left ( \max_{1 \leq i \leq n} \| X_{i} \|^{2} \right ) O_{\Pr} ( m_{n}^{\alpha+3}/n  ) + O_{\Pr} (m_{n}^{\alpha+1}/n),
\end{align*}
where we have used (\ref{eq: HH}). Condition (\ref{eq: moment condition2}) together with Lemma \ref{lem: moment inequality} yield that 
\[
\Ep \left (\max_{1 \leq i \leq n} \sum_{j=1}^{m_{n}} \xi_{i,j}^{2}/\kappa_{j} \right ) \leq  \sum_{j=1}^{m_{n}} \Ep \left \{ \max_{1 \leq i \leq n} (\xi_{i,j}^{2}/\kappa_{j}) \right \} \leq m_{n} n^{1/q} C_{1}^{1/q}.
\]
Furthermore, a repeated application of H\"{o}lder's inequality yields that 
\[
\Ep\{( \xi_{j_{1}}^{2}/\kappa_{j_{1}}) \cdots (\xi_{j_{q}}^{2}/\kappa_{j_{q}}) \} \leq [\Ep  \{ (\xi_{j_{1}}^{2}/\kappa_{j_{1}})^{q} \} ]^{1/q} \cdots  [\Ep  \{ (\xi_{j_{q}}^{2}/\kappa_{j_{q}})^{q} \} ]^{1/q} \leq C_{1},
\]
from which we have 
\begin{align*}
\Ep (\|X\|^{2q}) &= \Ep \left \{ \left (\sum_{j=1}^{\infty} \xi_{j}^{2} \right)^{q} \right \} =  \sum_{j_{1}=1}^{\infty} \cdots \sum_{j_{q}=1}^{\infty} (\kappa_{j_{1}} \cdots \kappa_{j_{q}})\Ep \{ (\xi_{j_{1}}^{2}/\kappa_{j_{1}}) \cdots (\xi_{j_{q}}^{2}/\kappa_{j_{q}}) \} \\
&\leq C_{1} \sum_{j_{1}=1}^{\infty} \cdots \sum_{j_{q}=1}^{\infty} \kappa_{j_{1}} \cdots \kappa_{j_{q}} = C_{1}\left (\sum_{j=1}^{\infty} \kappa_{j} \right)^{q} < \infty.
\end{align*}
This implies that 
$\Ep ( \max_{1 \leq i \leq n} \| X_{i} \|^{2}  ) =O(n^{1/q})$ by Lemma \ref{lem: moment inequality}. Therefore, we conclude that 
$\max_{1 \leq i \leq n}\sum_{j=1}^{m_{n}} \hat{\xi}_{i,j}^{2}/\kappa_{j} = O_{\Pr} (m_{n}n^{1/q})$, so that 
\[
m_{n}^{1/4} \sum_{i=1}^{n} \Ep_{\varepsilon} ( \| \Lambda_{n}^{-1/2} W_{i} \|_{2}^{3} ) = O_{\Pr} \{ m_{n}^{7/4}/n^{1/2-1/(2q)} \},
\]
which is $o_{\Pr}(1)$ under Condition (\ref{eq: condition m2}).  This completes the proof. 
\end{proof}

%
%

\end{document}